\theoremstyle{plain}
\newcommand{\bC}{{\mathbb C}}
\newcommand{\bF}{{\mathbb F}}
\newcommand{\bN}{{\mathbb N}}
\newcommand{\bQ}{{\mathbb Q}}
\newcommand{\bR}{{\mathbb R}}
\newcommand{\bZ}{{\mathbb Z}}
\newcommand{\cA}{{\mathscr A}}
\newcommand{\cB}{{\mathscr B}}
\newcommand{\cG}{{\mathscr G}}
\newcommand{\cI}{{\mathscr I}}
\newcommand{\cL}{{\mathscr L}}
\newcommand{\cM}{{\mathscr M}}
\newcommand{\cN}{{\mathscr N}}
\newcommand{\cP}{{\mathscr P}}
\newcommand{\cS}{{\mathscr S}}
\newcommand{\caO}{{\mathcal O}}
\newcommand{\fP}{{\mathfrak P}}
\newcommand{\fQ}{{\mathfrak Q}}
\newcommand{\fc}{{\mathfrak c}}
\newcommand{\fl}{{\mathfrak l}}
\newcommand{\fp}{{\mathfrak p}}
\newcommand{\fq}{{\mathfrak q}}
\DeclareSymbolFont{cyrletters}{OT2}{wncyr}{m}{n}
\DeclareMathSymbol{\Sha}{\mathalpha}{cyrletters}{"58}
\DeclareMathOperator{\id}{id}
\DeclareMathOperator{\Out}{Out}
\DeclareMathOperator{\Hom}{Hom}
\DeclareMathOperator{\Aut}{Aut}
\DeclareMathOperator{\coker}{coker}
\DeclareMathOperator{\im}{im}
\DeclareMathOperator{\GL}{GL}
\DeclareMathOperator{\Spec}{Spec}
\DeclareMathOperator{\R}{R}
\DeclareMathOperator{\res}{res}
\DeclareMathOperator{\cores}{cor}
\newcommand{\cd}{{\rm cd}}
\newcommand{\scd}{{\rm scd}}
\DeclareMathOperator{\Gal}{G}
\newcommand{\et}{\text{\rm \'et}}
\newcommand{\Fet}{{\rm Fet}}
\newcommand{\sep}{{\rm sep}}
\newcommand{\nr}{{\rm nr}}
\newcommand{\ab}{{\rm ab}}
\newtheorem*{rep@theorem}{\rep@title}
\newcommand{\newreptheorem}[2]{%
\newenvironment{rep#1}[1]{%
 \def\rep@title{#2 \ref{##1}}%
 \begin{rep@theorem}}%
 {\end{rep@theorem}}}
\newtheorem{thm}{Theorem}[section]
\newtheorem*{introThm}{Theorem}
\newtheorem{prop}[thm]{Proposition}
\newtheorem{cor}[thm]{Corollary}
\newtheorem{lm}[thm]{Lemma}
\newtheorem{conj}[thm]{Conjecture}
\newtheorem{conv}[thm]{Convention}
\theoremstyle{definition}
\newtheorem{Def}[thm]{Definition}
\newtheorem{rem}[thm]{Remark}
\newtheorem{rems}[thm]{Remarks}
\newtheorem{ex}[thm]{Example}
\newenvironment{pro*}[1][Proof]{{\it{#1:}} }{}
\newcommand{\abs}{\sharp}
\newcommand\cool{\overline{\mathbb{Q}_{\ell}}}
\newcommand\rar{ \rightarrow }
\newcommand\tar{ \twoheadrightarrow }
\newcommand\har{ \hookrightarrow }
\newcommand\Rar{ \Rightarrow }
\newcommand\LRar{ \Leftrightarrow }
\newcommand\longrar{\longrightarrow}
\newcommand\cs{\mathop{ \rm cs}}
\newcommand\dirlim{\mathop{\underrightarrow{\lim} }}
\newcommand\prolim{\mathop{\underleftarrow{\lim} }}
\newcommand{\sm}{{\,\smallsetminus\,}}
\newcommand\coh{\rm H}
\newcommand\subsetsim{\stackrel{\subset}{\sim}}
\newcommand\supsetsim{\stackrel{\supset}{\sim}}
\newcommand\Ind{\mathop{ \rm Ind}}
\DeclareMathOperator{\Resdirsum}{\bigoplus\nolimits^{\prime}}
\newcommand\Kapi{\mathop{ \rm K(\pi,1)}}
\DeclareMathOperator{\Ram}{Ram}
\newcounter{absatzcounter}[section]
\numberwithin{equation}{section}
\begin{document}

\title{Stable sets of primes in number fields}
\author{A. Ivanov}
\email{ivanov@mathi.uni-heidelberg.de}
\address{Mathematisches Institut, Universit\"at Heidelberg, Im Neuenheimer Feld 288, 69120 Heidelberg, Germany}
\keywords{Number Field, Galois Cohomology, Restricted Ramification, Dirichlet Density}
\thanks{The author was supported by the Mathematical Center Heidelberg}
\begin{abstract}
We define a new class of sets -- stable sets -- of primes in number
fields. For example, Chebotarev sets $P_{M/K}(\sigma)$, with $M/K$ Galois
and $\sigma \in \Gal(M/K)$, are very often stable. These sets have
positive (but arbitrarily small) Dirichlet density and they generalize sets with
density one in the sense that arithmetic theorems like certain Hasse
principles, Grunwald-Wang theorem, Riemann's existence theorem,
etc. hold for them. Geometrically this allows to give examples of infinite
sets $S$ with arbitrarily small positive density such that $\Spec
\mathcal{O}_{K,S}$ is a $K(\pi,1)$ (simultaneously for all $p$).
\end{abstract}

\maketitle
\setcounter{tocdepth}{1}
\tableofcontents

\section{Introduction}

The main goal of this article is to define a new class of sets of primes of positive Dirichlet density in number fields -- stable sets. These sets have a positive but arbitrarily small density and they generalize in many aspects sets of density one. In particular, most of the arithmetic theorems, such as certain Hasse principles, Grunwald-Wang theorem, Riemann's existence theorem, $\Kapi$-property, etc., which hold for sets of density one (cf. \cite{NSW} Chapters IX and X), also hold for stable sets. Our goals are on the one hand to prove these arithmetic results, and on the other hand to give many examples of stable sets.

The idea is as follows: let $\lambda > 1$. A set $S$ of primes in a number field $K$ is $\lambda$-\emph{stable} for the extension $\cL/K$, if there is a subset $S_0 \subseteq S$, a finite subextension $\cL/L_0/K$ and some $a > 0$ such that we have $\delta_L(S_0) \in [a,\lambda a)$ for all finite subextensions $\cL/L/L_0$, where $\delta_L$ is the Dirichlet density. We call the field $L_0$ a \emph{$\lambda$-stabilizing field} for $S$ for $\cL/K$. A more restrictive version is the notion of persistent sets: $S$ is \emph{persistent} if the function $L \mapsto \delta_{L}(S_0)$ gets constant in the tower $\cL/K$ beginning from some finite subextension $L_0/K$ (cf. Definition \ref{def:stable_and_persisting_sets}). In particular, for any $\lambda > 1$, a $\lambda$-stable set is persistent.

The main result in this article is the following theorem, which links stability to vanishing of certain Shafarevich-Tate groups. Let $\Sha^1$ denote the usual Shafarevich-Tate group, consisting of global cohomology classes, which vanish locally in a given set of primes and if $A$ is a module over a finite group $G$, then $\coh^1_{\ast}(G,A)$ denotes the subgroup of $\coh^1(G,A)$ consisting of precisely those classes, which vanish after restriction to all cyclic subgroups of $G$. Moreover, if $\cL/L$ is a Galois extension of fields, then $\Gal_{\cL/L}$ denotes its Galois group, and if $A$ is a $\Gal_{\cL/L}$-module, then $L(A)/L$ denotes the trivializing extension of $A$.

\begin{repthm}{thm:keyresult}
Let $K$ be a number field, $T$ a set of primes of $K$ and $\cL/K$ a Galois extension. Let $A$ be a finite $G_{\cL/K}$-module. Assume that $T$ is $p$-stable for $\cL/K$, where $p$ is the smallest prime divisor of $\abs{A}$. Let $L$ be a $p$-stabilizing field for $T$ for $\cL/K$. Then:

\[ \Sha^1(\cL/L,T; A) \subseteq \coh^1_{\ast}(L(A)/L, A). \]

\noindent In particular, if $\coh^1_{\ast}(L(A)/L, A) = 0$, then $\Sha^1(\cL/L,T; A) = 0$.
\end{repthm}

This theorem has many applications to the structure of the Galois group $\Gal_{K,S} := {\rm Gal}(K_S/K)$, where $K$ is a number field and $S$ is stable. To explain our results, we need some notations. If $S, R$ are two sets of primes of a number field $K$, then we denote by $K_S^R$ the maximal extension of $K$, which is unramified outside $S$ and completely split in $R$. Moreover, we denote by $\Gal_{K,S}^R$ the Galois group of $K_S^R/K$. Let $\cL/K$ be any Galois extension. For a prime $\fp$ of $K$ we denote by $\cL_{\fp}$ the completion of $\cL$ at a (any) extension of $\fp$ to $\cL$ (the isomorphism class of the completion $\cL_{\fp}$ does not depend on the particular choice of the extension of $\fp$ to $\cL$ as $\cL/K$ is Galois, and we suppress this choice in our notation). Furthermore, $\cG_{\fp}$ denotes the absolute Galois group of $K_{\fp}$, and $K_{\fp}(p)$ (resp. $K_{\fp}^{\nr}(p)$) denotes the maximal (resp. the maximal unramified) pro-p extension of $K_{\fp}$. Moreover, for a pro-finite group $G$, we denote the pro-$p$-completion of $G$ by $G(p)$. For more notations, see also the end of this introduction.

\begin{introThm}(cf. Theorems \ref{thm:LocExt_RExt_CD} and \ref{thm:Kapi1forstable})
Let $K$ be a number field, $p$ a rational prime, $\fp$ a prime of $K$ and $T \supseteq S \supseteq R$ sets of primes of $K$ with $R$ finite. Assume that $S$ is $p$-stable\footnote{In fact a weaker condition would suffice, cf. Theorem \ref{thm:LocExt_RExt_CD}.} for $K_S^R(\mu_p)/K$. Then
\begin{itemize}
\item[(A)](Local extensions)   \[ K_{S,\fp}^R \supseteq \begin{cases} K_{\fp}(p) & \text{if } \fp \in S \sm R \\ K_{\fp}^{\nr}(p) & \text{if } \fp \not\in S. \end{cases} \]
\item[(B)](Riemann's existence theorem) Let $I_{\fp}^{\prime}(p)$ denote the Galois group of the maximal pro-$p$ extension of $K_{S,\fp}^R$ and $K_T^{\prime}(p)/K_S^R$ denote the maximal pro-$p$ subextension of $K_T/K_S^R$. The natural map
\[ \phi_{T,S}^R \colon \bigast\limits_{\fp \in R(K_S^R)} \cG_{\fp}(p) \ast \bigast\limits_{\fp \in (T \sm S)(K_S^R)} I_{\fp}^{\prime}(p) \stackrel{\sim}{\longrar} \Gal_{K_T^{\prime}(p)/K_S^R} \]
\noindent is an isomorphism (where $\bigast$ is to be understood in the sense of \cite{NSW} Chapter IV).
\item[(C)](Cohomological dimension) Assume that either $p$ is odd, or $K$ is totally imaginary. Then 
\[ \cd_p \Gal_{K,S}^R = \scd_p \Gal_{K,S}^R = 2. \]
\item[(D)] ($\Kapi$-property) Assume additionally that $R = \emptyset$, $S \supseteq S_{\infty}$ and that either $p$ is odd, or $K$ is totally imaginary. Then $\Spec \caO_{K,S}$ is a $\Kapi$ for $p$ (cf. Definition \ref{def:Kapidef}).
\end{itemize}
\end{introThm}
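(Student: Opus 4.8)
The plan is to treat (A)--(D) as the $p$-stable analogues of the classical theorems of \cite{NSW}, Chapters~IX and X, which are proved there for sets $S$ of Dirichlet density one, and to reuse those arguments essentially unchanged, supplying Theorem~\ref{thm:keyresult} at the one place where density one enters in an essential way: the vanishing of a certain Shafarevich group.

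\emph{The key vanishing.} Apply Theorem~\ref{thm:keyresult} with $A = \mu_p$, so that $\abs{A} = p$ is its smallest prime divisor, with $\cL = K_S^R(\mu_p)$, and with the set of primes of that theorem taken to be $S$ (and, for part~(B), $T$; note that if $S$ is $p$-stable for $\cL/K$ then so is any set of primes containing $S$, with the same stabilizing field). Let $L$ be a $p$-stabilizing field. The trivializing extension $L(\mu_p)/L$ has Galois group a subgroup of $(\bZ/p)^{\times}$, hence cyclic, so every class in its first cohomology is detected on the whole group: $\coh^1_{\ast}(L(\mu_p)/L,\mu_p) = 0$, and Theorem~\ref{thm:keyresult} gives $\Sha^1(K_S^R(\mu_p)/L, S;\mu_p) = 0$, and likewise with $T$. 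By the very definition of a stabilizing field, every finite $L'$ with $L \subseteq L' \subseteq \cL$ is again a $p$-stabilizing field, so the same vanishing holds over every such $L'$. Since $K_S^R(\mu_p)$ is large enough to carry the Kummer classes dual to $\coh^1(\Gal_{L',S}^R,\bZ/p)$, Poitou--Tate duality over $L'$ turns this into $\Sha^2(\Gal_{L',S}^R,\bZ/p) = 0$, hence into surjectivity of the localization map $\coh^1(\Gal_{L',S}^R,\bZ/p) \longrightarrow \bigoplus_{\fp}\coh^1(L'_\fp,\bZ/p)$, the sum running over the primes in $S\sm R$ together with the unramified conditions at primes outside $S$. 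That surjectivity --- finitely many prescribed local $\bZ/p$-extensions being realized by a global extension inside $K_S^R$, over every $L'$ in the tower --- is all that the stability hypothesis is used for.

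\emph{From the vanishing to (A), then (D), (C), (B).} Restricting a suitable global class to the decomposition group at $\fp \in S\sm R$ and invoking the Frattini argument for the topologically finitely generated pro-$p$ group $\cG_\fp(p)$ shows that this decomposition group surjects onto $\cG_\fp(p)$; running the unramified case up the tower of $L'$'s gives surjectivity onto ${\rm Gal}(K_\fp^{\nr}(p)/K_\fp)$ at $\fp\notin S$. This is (A). The cohomological criterion for $\Spec\caO_{K,S}$ to be a $\Kapi$ for $p$ (cf.\ \cite{NSW}, Chapter~X) is precisely (A) in the pro-$p$ situation together with $\scd_p$ of the maximal pro-$p$ quotient being $\le 2$; the latter comes from the \'etale cohomology of the affine arithmetic curve $\Spec\caO_{K,S}$ and the hypothesis ``$p$ odd or $K$ totally imaginary'', which removes the contribution of the real places. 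This gives (D) and the upper bound in (C); the lower bound $\cd_p\ge 2$ is elementary (the degree-two localization map at $S$ is nonzero), so $\cd_p = \scd_p = 2$. Finally, given (A) and $\cd_p\le 2$, the map $\phi_{T,S}^R$ is injective (the decomposition subgroups of $\Gal_{K_T^{\prime}(p)/K_S^R}$ are as large as (A) asserts and intersect pairwise trivially) and is an isomorphism by the recognition theorem for free pro-$p$ products (\cite{NSW}, Chapter~IV): it induces an isomorphism on $\coh^1(-,\bZ/p)$ and an injection on $\coh^2(-,\bZ/p)$, the bookkeeping being the Shafarevich-group vanishing above plus the global Euler--Poincar\'e formula. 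This is (B).

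\emph{Descent and the main obstacle.} Everything above is proved first over $L$ (or a mild enlargement of $L$) and then descended to $K$: $\Spec\caO_{L,S}\to\Spec\caO_{K,S}$ is a connected finite \'etale cover, so the $\Kapi$-property and the equality $\scd_p = 2$ pass down, while inflation--restriction and the invariance of $\cd_p$ under open subgroups handle the group-theoretic parts. The main obstacle is precisely this interface with stability: one must verify that $\Sha^1(K_S^R(\mu_p)/L, S;\mu_p)$ genuinely governs $\Sha^2(\Gal_{L,S}^R,\bZ/p)$ --- i.e.\ that $K_S^R(\mu_p)$ carries enough Kummer theory --- a point that is delicate exactly in the tame case $S\not\supseteq S_p$, where the twist $\mu_p$ introduces ramification at $p$, only tame, and where one exploits that $[K_S^R(\mu_p):K_S^R]$ divides $p-1$ so that $p$-torsion cohomology is blind to the $\mu_p$-layer. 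This is the reason one is forced to work with the module $\mu_p$ rather than $\bZ/p$, and it is where the ``tame'' character of the statement lives; once the absolute case over $L$ is secured, the bookkeeping with the nested sets $R\subseteq S\subseteq T$ and the relative extension $K_T/K_S^R$ is routine.
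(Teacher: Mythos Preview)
Your overall strategy --- prove a Hasse principle for $\mu_p$, dualize to control $\Sha^2$, and then run the standard machine from \cite{NSW} --- is the right instinct, but the linchpin step does not go through as written, and the paper's actual route is genuinely different.

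The gap is the sentence ``Poitou--Tate duality over $L'$ turns this into $\Sha^2(\Gal_{L',S}^R,\bZ/p) = 0$''. Poitou--Tate in the form you need (\cite{NSW} 8.6.7) requires $S_p \cup S_\infty \subseteq S$, which is precisely what is \emph{not} assumed here; and even where it applies, it identifies $\Sha^2(K_S/L';\bZ/p)$ with $\Sha^1(K_S/L';\mu_p)^\vee$ for the \emph{same} tower, not with the group $\Sha^1(K_S^R(\mu_p)/L',S;\mu_p)$ you have computed. Your ``main obstacle'' paragraph correctly identifies this as the delicate point but does not resolve it: the observation that $[K_S^R(\mu_p):K_S^R]$ is prime to $p$ lets you compare cohomology of $\Gal_{K_S^R/L'}$ and $\Gal_{K_S^R(\mu_p)/L'}$, but it does not supply the missing duality over a set of primes that omits $S_p$. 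In fact the paper establishes $\Sha^2(K_S/L;\bZ/p) = 0$ in the tame case only \emph{after} (A), (B), (C) are already in hand (Proposition~\ref{prop:Sha2vanishingwoinverting}), so using it as an input to (A) would be circular.

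What the paper does instead is work over the enlarged set $S \cup T$ with $T$ a finite set containing $(S_p \cup S_\infty)\sm S$, where Poitou--Tate is available, and then prove not a pointwise vanishing but a vanishing \emph{in the direct limit} over the tower $\cL/K$: one shows (Theorem~\ref{thm:dirlim_GW_coker_vanishes}) that $\varinjlim_L \coker^1(K_{S\cup T}/L,T;\bZ/p) = 0$. The mechanism is a uniform bound on $\abs{\Sha^1(K_{S\cup T}/L,S\sm T;\mu_p)}$ together with eventual injectivity of restriction, forcing these groups to stabilize, after which $\cores\circ\res = [L':L] \equiv 0 \pmod p$ kills the limit. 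For (A) one then chooses the local conditions at $T\sm S$ to be unramified and at $R$ to be zero, so that the global class --- which a priori lives only in $\coh^1(\Gal_{L,S\cup T})$ --- actually defines an extension inside $K_S^R$. This enlarge-then-constrain manoeuvre, together with the direct-limit/corestriction trick, is the substantive idea your proposal is missing. The paper's order is then (A) $\Rightarrow$ (B) $\Rightarrow$ (C), with (D) handled separately via the \'etale Leray spectral sequence and the same limit vanishing; your order (A) $\Rightarrow$ (D) $\Rightarrow$ (C) $\Rightarrow$ (B) cannot work because the proof of $\cd_p \le 2$ already consumes Riemann's existence theorem (B).
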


There are also corresponding results for the maximal pro-$p$ quotient $\Gal_{K,S}^R(p)$ of $\Gal_{K,S}^R$. These results are essentially well-known (cf. \cite{NSW}) if $\delta_K(S) = 1$ resp. if $S \supseteq S_p \cup S_{\infty}$. Also, A. Schmidt showed recently that if $T_0$ is any fixed set with $\delta_K(T_0) = 1$ and $S$ is arbitrary finite set of primes, then there is a finite subset $T_1 \subseteq T_0$ (depending on $S$) such that the pro-$p$ versions of the above results essentially (e.g., except the result on $\scd_p$) hold if one replaces $S$ by $S \cup T_1$ (cf. \cite{Sch},\cite{Sch2},\cite{Sch3}). 

A further application of stable sets concerns a generalization of the Neukirch-Uchida theorem, which is a result of anabelian nature. More details on this can be found in \cite{Iv} Section 6. Now we see many examples of stable (even persistent) sets:

\begin{repcor}{cor:stability_of_ACS}
Let $M/K$ be finite Galois and let $\sigma \in \Gal_{M/K}$. Let $S \backsimeq P_{M/K}(\sigma)$ (i.e., up to a density zero subset, $S$ is equal to $P_{M/K}(\sigma)$). Let $\cL/K$ be any extension. Then $S$ is persistent (or, equivalently, stable; cf. Corollary \ref{prop:stabioity_properties_of_ACS}) for $\cL/K$ if and only if
\[ \Gal_{M/M \cap \cL} \cap C(\sigma; \Gal_{M/K}) \neq \emptyset, \]
where $C(\sigma;\Gal_{M/K})$ denotes the conjugacy class of $\sigma$ in $\Gal_{M/K}$. In particular,
\begin{itemize}
\item[(i)]  If $\sigma = 1$, then $S \backsimeq P_{M/K}(1) = \cs(M/K)$ is persistent for any extension $\cL/K$.
\item[(ii)] If $M \cap \cL = K$, then $S \backsimeq P_{M/K}(\sigma)$ is persistent for $\cL/K$.
\end{itemize}
\end{repcor}


\subsection*{Outline of the article}

In Section \ref{sec:s_and_p_sets} we introduce stable, sharply $p$-stable, strongly $p$-stable and persistent sets. Section \ref{sec:Examples_section} is devoted to examples: in particular, we introduce \emph{almost Chebotarev sets}, which provide us with a rich supply of persistent sets (Section \ref{sec:ACS}), and we show that essentially an almost Chebotarev set is sharply and strongly $p$-stable for almost all $p$ (Section \ref{sec:ex_propertiesast}). In Section \ref{sec:stablesets_and_Sha1_key_result} we prove our main result which is a general Hasse principle. In Sections \ref{sec:concrete_HPs_for_stablesets}-\ref{sec:unif_bounds_on_Sha1_field_var} we discuss some further Hasse principles and uniform bounds on Shafarevich-Tate groups for stable sets. In Section \ref{sec:Arith_appl} we deduce arithmetic applications such as the Grunwald-Wang theorem, realization of local extensions, Riemann's existence theorem and 
cohomological dimension. In Section \ref{sec:Kapi_section} we deduce the $\Kapi$ property at $p$ for $\Spec \caO_{K,S}$ with $S$ being sharply $p$-stable using results from Section \ref{sec:Arith_appl}.


\subsection*{Notation}

Our notation essentially coincides with the notation in \cite{NSW}. We collect some of the most important notations here. For a pro-finite group $G$ we denote by $G(p)$ its maximal pro-$p$ quotient. For a subgroup $H \subseteq G$, we denote by $N_G(H)$ its normalizer in $G$. If $\sigma \in G$, then we write $C(\sigma;G)$ for its conjugacy class. For two finite groups $H \subseteq G$, we write $m_H^G$ (resp. $m_H$, if $G$ is clear from the context) for the character of the induced representation $\Ind_H^G \bold{1}_H$.

For a Galois extension $M/L$ of fields, $\Gal_{M/L}$ denotes its Galois group and $L(p)$ denotes the maximal pro-$p$ extension of $L$ (in a fixed algebraic closure). By $K$ we always denote an algebraic number field, that is a finite extension of $\bQ$. If $\fp$ is a prime of $K$ and $L/K$ is a Galois extension, then $D_{\fp, L/K} \subseteq \Gal_{L/K}$ denotes the decomposition subgroup of $\fp$. We write $\Sigma_K$ for the set of all primes of $K$ and $S,T,R, \dots$ will usually denote subsets of $\Sigma_K$. If $L/K$ is an extension and $S$ a set of primes of $K$, then we denote the pull-back of $S$ to $L$ by $S_L$, $S(L)$ or $S$ (if no ambiguity can occur). We write $K_S^R/K$ for the maximal extension of $K$, which is unramified outside $S$ and completely split in $R$ and $\Gal_S^R := \Gal_{K,S}^R$ for its Galois group. We use the abbreviations $K_S := K_S^{\emptyset}$ and $\Gal_S := \Gal_S^{\emptyset}$. Further, for $p \leq \infty$ a (archimedean or non-archimedean) prime of $\bQ$, $S_p = S_p(K)$ denotes the 
set of all primes of $K$ lying over $p$. Further, if $S \subseteq \Sigma_K$, we write $\bN(S) := \bN \cap \caO_{K,S}^{\ast}$, i.e., $p \in \bN(S)$ if and only if $S_p \subseteq S$.

We write $\delta_K$ for the Dirichlet density on $\Sigma_K$. For $S, T$ subsets of $\Sigma_K$, we use (following \cite{NSW} 9.1.2)
\begin{eqnarray*}
S \subsetsim T &:\LRar& \delta_K( S \sm T) = 0 \\
S \backsimeq T &:\LRar& (S \subsetsim T) \text{ and } (T \subsetsim S).
\end{eqnarray*}

\noindent Thus $S \subsetsim T$ if $S$ is contained in $T$ up to a set of primes of density zero. For a finite Galois extension $M/K$ and $\sigma \in \Gal_{M/K}$, we have the Chebotarev set
\[ P_{M/K}(\sigma) = \{ \fp \in \Sigma_K \colon \fp \text{ is unramified in $M/K$ and } (\fp, M/K) = C(\sigma ; \Gal_{M/K}) \}, \]
\noindent where $(\fp, M/K)$ denotes the conjugacy class of Frobenius elements corresponding to primes of $M$ lying over $\fp$.


\subsection*{Acknowledgements}
A part of the results in this article coincide with the results in author's Ph.D. thesis \cite{Iv}, which was written under the supervision of Jakob Stix at the University of Heidelberg. The author is very grateful to him for the very good supervision, and to Kay Wingberg, Johannes Schmidt and a lot of other people for very helpful remarks and interesting discussions. The work on the author's thesis was partially supported by Mathematical Center Heidelberg and the Mathematical Institute Heidelberg. The author thanks both institutions for their hospitality and the excellent working conditions.


\section{Stable and persistent sets} \label{sec:s_and_p_sets}

\subsection{Warm-up: preliminaries on Dirichlet density} \label{sec:warm_up}
Let $\cP_K$ denote the set of all subsets of $\Sigma_K$. The Dirichlet density $\delta_K$ is not defined for all elements in $\cP_K$. Moreover, there are examples of finite extensions $L/K$ and $S \in \cP_K$, such that $S$ has a density, but the pull-back $S_L$ of $S$ to $L$ has no density. To avoid dealing with such sets we make the following convention, which holds until the end of this article.

\begin{conv} \label{conv:density_ex}
If $S \in \cP_K$ is a set of primes of $K$, then we assume implicitly that for all finite extensions $L/K$, all finite Galois extensions $M/L$ and all $\sigma \in \Gal_{M/L}$, the set $S_L \cap P_{M/L}(\sigma)$ has a Dirichlet density.\footnote{The optimal way to omit sets having no density would be to find an appropriate sub-$\sigma$-algebra of $\cP_K$ (for any $K$), such that the restriction of $\delta_K$ to it is a measure (and the pull-back maps $\cP_K \rar \cP_L$ attached to finite extensions $L/K$ restrict to pull-back maps on these sub-$\sigma$-algebras). Unfortunately, there is no satisfactory way to find such $\sigma$-algebra $\cB_K$, at least if one requires that if $S \in \cB_K$, then also $T \in \cB_K$ for any $T \backsimeq S$, or, which is weaker, that any finite set of primes of $K$ lies in $\cB_K$. Indeed, countability of $\Sigma_K$ would imply $\cB_K = \cP_K$ in this case, but not all elements of $\cP_K$ have a Dirichlet density.}
\end{conv}

Convention \ref{conv:density_ex} is satisfied for all sets lying in the following rather big subset of $\cP_K$:

\[ \cA_K := \left\{  \begin{array}{cl} S \subseteq \Sigma_K \colon S \backsimeq \bigcup_i P_{L_i/K_i}(\sigma_i)_K \text{ for some } K/K_i/\bQ \\ \text{and } L_i/K_i \text{ finite Galois and }\sigma_i \in \Gal_{L_i/K_i} \end{array} \right\}, \]

\noindent where the unions are \noindent{disjoint} and countable (or finite or empty). This set $\cA_K$ can not be closed simultaneously under (arbitrary) unions and complements: otherwise it would be a $\sigma$-algebra and hence would be equal to $\cP_K$.

To compute the density of pull-backs of sets we use the following two lemmas. Let $L/K$ be a finite extension of degree $n$ (not necessarily Galois). For $0 \leq m \leq n$, define the following sets:
\[ P_m(L/K) := \{\fp \in \Sigma_K \colon \fp \text{ is unramified and has exactly } m \text{ degree-1-factors in } L \}. \]

\noindent In particular, $P_n(L/K) = \cs(L/K)$, $P_{n-1}(L/K) = \emptyset$. Recall that if $H \subseteq G$ are finite groups, then $m_H$ denotes the character of the $G$-representation $\Ind_H^G \bold{1}$. One has:

\[ m_H(\sigma) = \abs{ \{ gH \colon \langle \sigma \rangle^g \subseteq H  \} } = \abs{ \{ \langle \sigma \rangle g H \colon \langle \sigma \rangle^g \subseteq H \} }, \]

\noindent where $\langle \sigma \rangle \subseteq G$ denotes the subgroup generated by $\sigma$ and $\langle \sigma \rangle^g := g^{-1} \langle \sigma \rangle g$. The second equality follows immediately from the fact that if $\langle \sigma \rangle^g \subseteq H$, then $gH = \langle \sigma \rangle g H$.

\begin{lm} \label{lm:P_i_setsZerlegung}
Let $L/K$ be a finite extension and $N/K$ a finite Galois extension containing $L$, with Galois group $G$, such that $L$ corresponds to a subgroup $H \subseteq G$. Then

\[ P_m(L/K) \backsimeq \{ \fp \in P_m(L/K) \colon \fp \text{ is unramified in } N/K \} = \bigcup_{ \stackrel{C(\sigma;G) \subseteq G}{m_H(\sigma)  = m } } P_{N/K}(\sigma) \]

\noindent (disjoint union). In particular, $P_m(L/K) \in \cA_K$ and
\[ \delta_K(P_m(L/K)) = \abs{G}^{-1} \sum_{ \stackrel{C(\sigma;G) \subseteq G}{m_H(\sigma) = m} } \abs{C(\sigma;G)} .  \]
\end{lm}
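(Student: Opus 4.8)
The plan is to translate the splitting behaviour of a prime $\fp$ in $L/K$ into group-theoretic data about the Frobenius conjugacy class in $G = \Gal_{N/K}$, and then repackage the resulting partition of the unramified primes as a disjoint union of Chebotarev sets. First I would note that all ramified primes (of $K$ in $N/K$, equivalently in $L/K$ together with those ramifying only in $N/K$) form a finite, hence density-zero, set; this justifies replacing $P_m(L/K)$ by its intersection with the primes unramified in $N/K$ up to $\backsimeq$, and it is the content of the first ``$\backsimeq$'' in the statement.

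Next, for a prime $\fp$ of $K$ unramified in $N/K$, pick a prime $\fP$ of $N$ above $\fp$ and let $\sigma = \Frob_{\fP} \in G$ be the associated Frobenius, so that the conjugacy class $(\fp, N/K) = C(\sigma; G)$. The key elementary fact from Galois theory of number fields is that the primes of $L$ above $\fp$ are in bijection with the orbits of $\langle \sigma \rangle$ acting by left multiplication on $H \backslash G$ (equivalently on $G/H$ after the usual identifications), and the residue degree of the prime corresponding to an orbit is the size of that orbit. Hence a prime of $L$ above $\fp$ has residue degree $1$ exactly when it corresponds to a fixed point of $\langle \sigma \rangle$ on $G/H$, i.e. to a coset $gH$ with $\langle \sigma \rangle^g \subseteq H$. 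Therefore the number of degree-one primes of $L$ above $\fp$ equals $\#\{ gH : \langle \sigma \rangle^g \subseteq H \}$, which by the displayed formula recalled just before the lemma is precisely $m_H(\sigma)$. Since $m_H$ is a class function, this count depends only on $C(\sigma; G)$, so $\fp \in P_m(L/K)$ (and unramified in $N/K$) if and only if $(\fp, N/K) = C(\sigma;G)$ for some $\sigma$ with $m_H(\sigma) = m$. This gives the displayed equality as a disjoint union over the conjugacy classes with $m_H(\sigma) = m$, the disjointness being immediate because distinct conjugacy classes give disjoint Chebotarev sets.

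Finally, membership in $\cA_K$ is then automatic: $P_m(L/K)$ is, up to $\backsimeq$, a finite disjoint union of sets $P_{N/K}(\sigma)$ with $N/K$ finite Galois, which is exactly the shape of elements of $\cA_K$. For the density formula I would apply the Chebotarev density theorem, $\delta_K(P_{N/K}(\sigma)) = \abs{C(\sigma;G)}/\abs{G}$, and sum over the relevant classes using finite additivity of Dirichlet density on the disjoint union (the ramified primes contributing $0$); this yields $\delta_K(P_m(L/K)) = \abs{G}^{-1} \sum_{m_H(\sigma)=m} \abs{C(\sigma;G)}$.

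I expect the only genuine point requiring care — rather than an ``obstacle'' — to be the orbit/residue-degree dictionary: one must be precise about whether $\langle\sigma\rangle$ acts on $G/H$ or $H\backslash G$, and match the normalization of Frobenius (arithmetic vs.\ geometric) with the convention implicit in the definition of $(\fp, M/K)$ given in the Notation section, so that ``fixed points of $\langle\sigma\rangle$'' correctly corresponds to ``degree-one factors''. Everything else is bookkeeping: the finiteness of the ramified set, the class-function property of $m_H$, and the additivity of $\delta_K$ over finite disjoint unions.
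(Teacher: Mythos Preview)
Your proposal is correct and follows essentially the same approach as the paper: the paper's proof also reduces to the standard dictionary that primes of $L$ over an unramified $\fp$ correspond to double cosets $\langle \sigma \rangle g H$ (equivalently, orbits of $\langle \sigma \rangle$ on $G/H$) with residue degree equal to the orbit size, so that degree-one factors are counted by $m_H(\sigma)$, and then invokes Chebotarev for the density. Your write-up is in fact more explicit than the paper's, which simply labels this step an ``elementary exercise in Galois theory''.
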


\begin{proof} The proof of the first statement is an elementary exercise in Galois theory (if $\fp$ is a prime of $K$ unramified in $N$, then the primes of $L$ lying over $\fp$ are in one-to-one correspondence with double cosets $\langle \sigma \rangle g H$, where $\sigma$ is arbitrary in the Frobenius class of $\fp$; the residue field extension of a prime belonging to the coset $\langle \sigma \rangle g H$ over $\fp$ has the Galois group $\langle \sigma \rangle^g/\langle \sigma \rangle^g \cap H$). The second statement follows from the first and the Chebotarev density theorem.
\end{proof}

\begin{lm}\label{lm:Dichtepullback}
Let $L/K$ be a finite extension of degree $n$, $S$ a set of primes of $K$ and $N/K$ a Galois extension containing $L$, such that $G := \Gal_{N/K} \supseteq \Gal_{N/L} =: H$. Then
\[ \delta_L(S) = \sum_{m=1}^n m \delta_K(S \cap P_m(L/K)) = \sum_{C(\sigma ; G) \subseteq G } m_H(\sigma) \delta_K(S \cap P_{N/K}(\sigma)). \]
\noindent If, in particular, $L/K$ is Galois, we get the well-known formula $\delta_L(S) = [L:K] \delta_K(S \cap \cs(L/K))$.
\end{lm}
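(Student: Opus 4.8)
The plan is to evaluate both sides via the analytic definition of Dirichlet density and to reduce everything to the prime-counting identity of Lemma~\ref{lm:P_i_setsZerlegung}.

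Recall that for a set $\Theta$ of primes of a number field $F$ having a density one has $\delta_F(\Theta) = \lim_{s\to 1^+}\bigl(\sum_{\fP\in\Theta}\mathrm{N}_{F/\bQ}\fP^{-s}\bigr)\big/\log\tfrac{1}{s-1}$, and that the primes of absolute residue degree $\geq 2$ contribute a sum that converges at $s=1$ (each rational prime has at most $[F:\bQ]$ primes above it, whence the sum is dominated by $[F:\bQ]\sum_p p^{-2s}$); so only degree-one primes matter. First I would apply this on the $L$-side. Write $S_L = \bigsqcup_{\fp\in S}\{\fq\in\Sigma_L : \fq\mid\fp\}$ and note $\mathrm{N}_{L/\bQ}\fq = (\mathrm{N}_{K/\bQ}\fp)^{f(\fq\mid\fp)}$. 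Discarding the finitely many $\fp\in S$ ramified in $L$, and the terms with $\mathrm{N}_{L/\bQ}\fq$ not prime (convergent at $s=1$), the remaining $\fq$ have $\mathrm{N}_{L/\bQ}\fq = \mathrm{N}_{K/\bQ}\fp$ prime and $f(\fq\mid\fp)=1$; conversely, for $\fp\in S$ unramified of absolute degree $1$ these $\fq$ are exactly the degree-one factors of $\fp$ in $L$, of which there are $m$ iff $\fp\in P_m(L/K)$. Hence, up to a term convergent at $s=1$, $\sum_{\fq\in S_L}\mathrm{N}_{L/\bQ}\fq^{-s} = \sum_{m=1}^n m\sum_{\fp\in S\cap P_m(L/K)}\mathrm{N}_{K/\bQ}\fp^{-s}$; dividing by $\log\tfrac{1}{s-1}$ and letting $s\to 1^+$ gives the first equality $\delta_L(S) = \sum_{m=1}^n m\,\delta_K(S\cap P_m(L/K))$. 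Here each $S\cap P_m(L/K)$ has a density: by Lemma~\ref{lm:P_i_setsZerlegung}, $P_m(L/K)$ is, up to $\backsimeq$, a disjoint union of sets $P_{N/K}(\sigma)$, so this follows from Convention~\ref{conv:density_ex}.

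For the middle equality I would intersect the decomposition of Lemma~\ref{lm:P_i_setsZerlegung} with $S$, obtaining $S\cap P_m(L/K)\backsimeq\bigsqcup_{C(\sigma;G):\,m_H(\sigma)=m}(S\cap P_{N/K}(\sigma))$; since $\backsimeq$-equivalent sets having densities have equal density, $\delta_K(S\cap P_m(L/K)) = \sum_{C(\sigma;G):\,m_H(\sigma)=m}\delta_K(S\cap P_{N/K}(\sigma))$, and substituting this into the first formula and regrouping the double sum by the value $m=m_H(\sigma)$ yields $\delta_L(S) = \sum_{C(\sigma;G)}m_H(\sigma)\,\delta_K(S\cap P_{N/K}(\sigma))$. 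For the final assertion, if $L/K$ is Galois then $H\trianglelefteq G$, so $\Ind_H^G\mathbf{1}$ is the inflation to $G$ of the regular representation of $G/H$; hence $m_H(\sigma) = [L:K]$ for $\sigma\in H$ and $m_H(\sigma)=0$ otherwise, while Lemma~\ref{lm:P_i_setsZerlegung} with $m=n=[L:K]$ gives $\cs(L/K) = P_n(L/K)\backsimeq\bigsqcup_{C(\sigma;G)\subseteq H}P_{N/K}(\sigma)$. Plugging both into the middle formula collapses it to $\delta_L(S) = [L:K]\,\delta_K(S\cap\cs(L/K))$.

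I expect the only real difficulty to be the analytic bookkeeping — making precise that passing to degree-one primes on both sides changes the relevant limit by zero — and keeping track of which of the sets $S_L$, $S\cap P_m(L/K)$, $S\cap P_{N/K}(\sigma)$ are a priori known to have a Dirichlet density, which is exactly the role played by Convention~\ref{conv:density_ex} together with Lemma~\ref{lm:P_i_setsZerlegung}. Everything else is formal manipulation of the two sums.
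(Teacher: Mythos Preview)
Your argument is correct and is precisely the approach the paper has in mind: the paper's own proof merely states that the first equality ``is an easy computation'' (meaning exactly the degree-one-prime bookkeeping you carry out) and that the second ``follows from Lemma~\ref{lm:P_i_setsZerlegung}'', which is what you do by intersecting that decomposition with $S$ and regrouping. Your handling of the Galois case via $m_H$ vanishing off $H$ is the standard unwinding of the final sentence.
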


\begin{proof}
The first equation is an easy computation and the second follows from Lemma \ref{lm:P_i_setsZerlegung}.
\end{proof}


\subsection{Key definitions} \label{sec:def_of_s_and_p_sets}

Let $K$ be a number field and $S$ a set of primes. If $\delta_K(S) = 0$ resp. $= 1$, then also $\delta_L(S) = 0$ resp. $= 1$ for all finite $L/K$. Now, if $0 < \delta_K(S) < 1$, then it can happen that there is some finite $L/K$ with $\delta_L(S) = 0$ (take a finite Galois extension $L/K$ and set $S := \Sigma_K \sm \cs(L/K)$; then $\delta_K(S) = 1 - [L:K]^{-1}$ and $\delta_L(S_L) = 0$). For stable sets, defined below, this possibility is excluded.

\begin{Def}\label{def:stable_and_persisting_sets}
Let $S$ be a set of primes of $K$, $\cL/K$ any extension and $\lambda > 1$. A finite subextension $\cL/L_0/K$ is called \textbf{$\lambda$-stabilizing} for $S$ for $\cL/K$, if there exists a subset $S_0 \subseteq S$ and some $a \in (0,1]$, such that $\lambda a > \delta_L(S_0) \geq a > 0$ for all finite subextensions $\cL/L/L_0$. Moreover, we call $L_0$ \textbf{persisting} for $S$ for $\cL/K$, if there exists a subset $S_0 \subseteq S$, such that $\delta_L(S_0) = \delta_{L_0}(S_0) > 0$ for all finite subextensions $\cL/L/L_0$. Further,

\begin{itemize}
\item[(i)]  We call $S$ \textbf{$\lambda$-stable} (resp. \textbf{persistent}) for $\cL/K$ if it has a  $\lambda$-stabilizing resp. persisting extension for $\cL/K$.
\item[(ii)] We call $S$ \textbf{stable} for $\cL/K$, if there is a $\lambda > 1$ such that $S$ is $\lambda$-stable for $\cL/K$.
\end{itemize}

\noindent Assume that $\lambda = p$ is a rational prime.
\begin{itemize}
\item[(iii)]  We call $S$ \textbf{sharply $p$-stable} for $\cL/K$ if $\mu_p \subseteq \cL$ and $S$ is $p$-stable for $\cL/K$, or $\mu_p \not\subseteq \cL$ and $S$ is stable for $\cL(\mu_p)/K$.
\end{itemize}
\end{Def}

In applications in this article we will often use the case $\cL = K_S$. Therefore, we define:

\begin{Def}\label{def:stable_and_pers_sets_wrt_K_S}
Let $S$ be a set of primes of $K$ and $\lambda > 1$. 
\begin{itemize}
\item[(i)] We call $S$ $\lambda$-\textbf{stable} (resp. \textbf{stable}, resp. \textbf{presistent}), if $S$ is $\lambda$-stable (resp. stable, resp. persistent) for $K_S/K$.
\end{itemize}
\noindent Assume that $\lambda = p$ is a rational prime.
\begin{itemize}
\item[(ii)] We call $S$ \textbf{sharply $p$-stable} if $S$ is sharply $p$-stable for $K_S/K$. Moreover, we define the exceptional set $E^{\rm sharp}(S)$ to be the set of all rational primes $p$ such that $S$ is not sharply $p$-stable.
\item[(iii)] We call $S$ \textbf{strongly $p$-stable}, if $S$ is $p$-stable for $K_{S \cup S_p \cup S_{\infty}}/K$ with a $p$-stabilizing field contained in $K_S$. Further, we call $S$ strongly $\infty$-stable, if $S$ is stable for $K_{S \cup S_{\infty}}/K$. Moreover, we define the exceptional set $E^{\rm strong}(S)$ to be the set of all rational primes $p$ or $p = \infty$, such that $S$ is not strongly $p$-stable.
\end{itemize}
\end{Def}

Clearly, a strongly $p$-stable set is also $p$-stable and sharply $p$-stable. In particular, we have $E^{\rm strong}(S) \supseteq E^{\rm sharp}(S)$. On the other side, in general, neither one of the properties '$p$-stable' and 'sharply $p$-stable' implies the other. 

\begin{lm}\label{prop:firstpropsofstabletrivs} Let $\cL/K$ be an extension and $S$ a set of primes of $K$.
\begin{itemize}
\item[(i)] Let $\lambda \geq \mu > 1$.  If $S$ is $\mu$-stable with $\mu$-stabilizing field $L_0$, then $S$ is $\lambda$-stable with $\lambda$-stabilizing field $L_0$.
\item[(ii)]  If $L_0$ is $\lambda$-stabilizing resp. persisting field for $S$ for $\cL/K$, then any finite subextension $\cL/L_1/L_0$ has the same property.

\item[(iii)] Let $S^{\prime}$ be a further set of primes of $K$. If $S \subsetsim S^{\prime}$, and $S$ is $\lambda$-stable resp. persistent for $\cL/K$, then  $S^{\prime}$ also has this property. Any $\lambda$-stabilizing resp. persisting field for $S$ has the same property for $S^{\prime}$.

\item[(iv)]   Let $\cL/\cN/M/K$ be subextensions. If $S$ is $\lambda$-stable resp. persistent for $\cL/K$ with $\lambda$-stabilizing resp. persisting field $L_0 \subseteq \cN$, then $S_M$ is $\lambda$-stable resp. persistent for $\cN/M$.
\end{itemize}
\end{lm}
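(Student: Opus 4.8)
The plan is to read off all four assertions directly from Definition~\ref{def:stable_and_persisting_sets}, using only two elementary observations: first, that a set of primes with Dirichlet density $0$ in $K$ has density $0$ in every finite extension $L/K$ (as recalled at the beginning of Section~\ref{sec:def_of_s_and_p_sets}), so that such sets are negligible for all the densities occurring below; and second, that pull-back of sets of primes is transitive along a tower of fields. Part (i) is then immediate: a witnessing pair $(S_0,a)$ for $\mu$-stability over $L_0$, i.e. with $a \le \delta_L(S_0) < \mu a$ for all finite $\cL/L/L_0$, is also a witness for $\lambda$-stability over $L_0$ since $\mu a \le \lambda a$. Part (ii) is equally direct: every finite subextension $\cL/L/L_1$ is in particular a finite subextension $\cL/L/L_0$ (and $L_1/K$ is finite because $L_0/K$ and $L_1/L_0$ are), so the defining inequalities for a $\lambda$-stabilizing field, resp. the equalities for a persisting field, which hold for all $\cL/L/L_0$, continue to hold for all $\cL/L/L_1$ with the same witness.

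For part (iii), I would take a witness $S_0 \subseteq S$ (together with $a$, over a $\lambda$-stabilizing resp. persisting field $L_0$ for $S$) and replace it by $S_0' := S_0 \cap S' \subseteq S'$. Since $S_0 \sm S_0' \subseteq S \sm S'$ has density $0$ in $K$, hence in every finite $L/K$, finite additivity of the Dirichlet density gives $\delta_L(S_0') = \delta_L(S_0)$ for every finite $L/K$ (and in particular $S_0'$ has a density whenever $S_0$ does). Therefore $(S_0',a)$, resp. $S_0'$, witnesses the corresponding property of $L_0$ for $S'$.

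For part (iv), the one point that needs a little care is that the given stabilizing field $L_0$ lies in $\cN$ but need not contain $M$; so I would pass to the compositum $L_0' := L_0 M$, which lies in $\cN$, contains $M$, and is finite over $M$ because $L_0/K$ is finite. As a witness for $S_M$ I would take the pull-back $(S_0)_M \subseteq S_M$ of a witness $S_0$ for $S$ over $L_0$. For any finite subextension $\cN/L/L_0'$ one has $L_0 \subseteq L \subseteq \cL$ with $L/K$ finite, and transitivity of pull-back identifies the pull-back of $(S_0)_M$ to $L$ with the pull-back of $S_0$ to $L$, so that $\delta_L((S_0)_M) = \delta_L(S_0)$. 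Feeding this into the inequalities $a \le \delta_L(S_0) < \lambda a$ (in the stable case), resp. into $\delta_L(S_0) = \delta_{L_0}(S_0) > 0$, which also yields $\delta_{L_0'}((S_0)_M) = \delta_{L_0}(S_0)$ (in the persistent case), shows that $L_0'$ is a $\lambda$-stabilizing, resp. persisting, field for $S_M$ for $\cN/M$.

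I do not expect any genuine obstacle: the lemma is essentially bookkeeping around the quantifier over finite subextensions in Definition~\ref{def:stable_and_persisting_sets}. The only things to keep track of are that every auxiliary set ($S_0'$ in (iii), $(S_0)_M$ in (iv)) still possesses a Dirichlet density at the relevant fields — ensured by Convention~\ref{conv:density_ex} together with the negligibility of density-$0$ sets — and, in (iv), the necessity of enlarging $L_0$ to $L_0 M$ so that the new stabilizing field actually sits between $M$ and $\cN$.
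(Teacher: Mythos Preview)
Your proof is correct and is precisely the kind of direct verification from Definition~\ref{def:stable_and_persisting_sets} that the paper has in mind; the paper itself simply declares the lemma ``straightforward'' without spelling out any details. The only minor comment is that in part~(iv) you implicitly use that $M/K$ is finite (so that each intermediate $L$ with $\cN/L/L_0M$ finite is also finite over $K$), which is indeed the intended setting; otherwise your handling of the compositum $L_0M$ and of the density-$0$ discrepancy in part~(iii) is exactly right.
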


\begin{lm} \label{lm:dagger_rel_p_is_nice}
Let $\cL/K$ be an extension and $S$ a set of primes of $K$. Assume that $S$ is sharply $p$-stable for $\cL/K$. There is a finite subextension $\cL/L_0/K$, such that for any subextensions $\cL/\cN/L/L_0$ (with $L/L_0$ finite) $S$ is sharply $p$-stable for $\cN/L)$.
\end{lm}

The proofs of these lemmas are straightforward. The following proposition gives another characterization of stable sets and shows in particular, that if $S$ is stable for $\cL/K$, then any finite subfield $\cL/L/K$ is $\lambda$-stabilizing for $S$ with a certain $\lambda > 1$ depending on $L$.

\begin{prop}\label{prop:stableequalbounded} Let $S$ be a set of primes of $K$ and $\cL/K$ an extension. The following are equivalent:
\begin{itemize}
\item[(i)]   $S$ is stable for $\cL/K$.
\item[(ii)]  There exists some $\lambda > 1$, such that $S$ is $\lambda$-stable for $\cL/K$ with $\lambda$-stabilizing field $K$.
\item[(iii)] There exist some $\epsilon > 0$ such that $\delta_L(S) > \epsilon$ for all finite $\cL/L/K$.
\end{itemize}
\end{prop}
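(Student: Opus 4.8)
The plan is to prove the cycle of implications $(ii) \Rightarrow (iii) \Rightarrow (i) \Rightarrow (ii)$, of which only one direction requires real work. The implications $(ii) \Rightarrow (i)$ and $(i) \Rightarrow (iii)$ are nearly formal: if $S$ is $\lambda$-stable with stabilizing field $K$, then in particular it is stable for $\cL/K$; and if $S$ is stable for $\cL/K$ with $\lambda$-stabilizing field $L_0$ and witnessing subset $S_0 \subseteq S$ and constant $a > 0$, then for every finite $\cL/L/L_0$ we have $\delta_L(S) \geq \delta_L(S_0) \geq a > 0$, so it remains only to handle the finitely many intermediate fields between $K$ and $L_0$ — but each such $L$ is itself a number field, and a nonempty set of primes that contains $S_0$ (pulled back) still has density bounded below; more carefully, one uses Lemma \ref{lm:Dichtepullback} to compare $\delta_L(S)$ for $K \subseteq L \subseteq L_0$ against $\delta_{L_0}(S_0)$, noting that the density can only drop by a bounded multiplicative factor $[L_0 : L] \le [L_0:K]$ when passing up to $L_0$, hence $\delta_L(S) \geq a/[L_0:K] =: \epsilon > 0$ for all these finitely many $L$. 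Taking the minimum of this $\epsilon$ with $a$ gives a uniform lower bound for all finite $\cL/L/K$.

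The substantive implication is $(iii) \Rightarrow (ii)$: from a uniform lower bound $\delta_L(S) > \epsilon$ for all finite $\cL/L/K$, I must manufacture a subset $S_0 \subseteq S$ and a ratio $\lambda$ so that $\delta_L(S_0)$ stays trapped in $[a, \lambda a)$ for \emph{all} finite $\cL/L/K$ — that is, with stabilizing field $K$ itself. The natural choice is simply $S_0 = S$, and then the claim is that $\{\delta_L(S) : \cL/L/K \text{ finite}\}$, which we already know is bounded below by $\epsilon$, is also bounded above. This is where Lemma \ref{lm:Dichtepullback} is the key tool: for a finite subextension $\cL/L/K$, pick a finite Galois $N/K$ inside (or containing) the relevant fields with $G = \Gal_{N/K} \supseteq H = \Gal_{N/L}$, and write $\delta_L(S) = \sum_{C(\sigma;G)} m_H(\sigma)\, \delta_K(S \cap P_{N/K}(\sigma))$. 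Since $m_H(\sigma) \leq [G:H] = [L:K]$ and $\sum_{C(\sigma;G)} \delta_K(S \cap P_{N/K}(\sigma)) \le \delta_K(\Sigma_K) = 1$, we get the crude bound $\delta_L(S) \le [L:K]$, which is useless as $[L:K] \to \infty$. The right move instead is to bound $[L:K]$ itself: since $1 \ge \delta_L(\Sigma_L \sm S) $... no — rather, apply the same reasoning to show $\delta_L(S) \le 1$ is false in general; one genuinely needs a uniform bound, and I expect it to come from combining the lower bound with Lemma \ref{lm:Dichtepullback} applied to the \emph{complement}: $\delta_L(\Sigma_L) = 1$ and the contribution of $S$ plus the contribution of $\Sigma_L \sm S$ sum to $1$, giving $\delta_L(S) \le 1$ directly from the additivity of density on $\Sigma_L$. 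That bounds everything by $1$.

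So the clean argument for $(iii) \Rightarrow (ii)$ is: take $S_0 = S$; by hypothesis $\delta_L(S) > \epsilon$ for all finite $\cL/L/K$, and trivially $\delta_L(S) \le 1$ (as $S_L \subseteq \Sigma_L$ and $\delta_L$ is additive with $\delta_L(\Sigma_L) = 1$); hence choosing $a := \epsilon$ and $\lambda := 2/\epsilon > 1$ (note $\epsilon < 1$ since e.g. $\delta_L(S) \le 1$ forces $\epsilon < 1$, or just take $\lambda$ large enough that $\lambda \epsilon > 1$), we have $\epsilon = a \le \delta_L(S) < \lambda a$ for all finite $\cL/L/K$, so $K$ is a $\lambda$-stabilizing field. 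I expect the only mild subtlety — the potential obstacle — to be bookkeeping around Convention \ref{conv:density_ex}: one must make sure all the sets $S_L$, $S \cap P_{N/K}(\sigma)$ etc. actually have densities, but this is exactly what the standing convention guarantees, and the additivity $\delta_L(S_L) + \delta_L(\Sigma_L \sm S_L) = 1$ is valid once both terms are known to exist. There are no deeper difficulties: the proposition is essentially a repackaging of Lemma \ref{lm:Dichtepullback} plus the observation that densities lie in $[0,1]$, and the main point is the conceptual one that "stable" $=$ "density bounded away from $0$ up the tower", with the upper bound being free.
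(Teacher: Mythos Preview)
Your argument for $(iii) \Rightarrow (ii)$ is correct and matches the paper, which also treats this direction as trivial: take $S_0 = S$, use $\epsilon \leq \delta_L(S) \leq 1$, and pick any $\lambda > \epsilon^{-1}$. (The meandering about $m_H(\sigma) \leq [G:H]$ was a detour; the bound $\delta_L(S) \leq 1$ is immediate.)

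The real gap is in your treatment of $(i) \Rightarrow (iii)$, which you call ``nearly formal'' but which is actually the only substantive implication. You correctly bound $\delta_L(S) \geq a$ for all finite $\cL/L/L_0$, and then write that ``it remains only to handle the finitely many intermediate fields between $K$ and $L_0$''. This is a false dichotomy: a finite subextension $\cL/L/K$ need not contain $L_0$ nor be contained in $L_0$; there are in general infinitely many $L$ incomparable with $L_0$, and your argument says nothing about them. The paper fixes exactly this: for an arbitrary finite $\cL/M/K$, one passes to the compositum $L_0 M$, which does lie over $L_0$, and uses Lemma~\ref{lm:Dichtepullback} to get
\[
\delta_{L_0 M}(S_0) \;\leq\; [L_0 M : M]\,\delta_M(S_0) \;\leq\; [L_0:K]\,\delta_M(S_0),
\]
so that $\delta_M(S_0) \geq a/[L_0:K]$ uniformly. (The paper phrases this contrapositively, assuming $\delta_{M_i}(S_0) \to 0$ and deriving a contradiction, but it is the same computation.) Your bound $a/[L_0:K]$ is the right one; you just applied it to the wrong family of fields.
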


\begin{proof}
(iii) $\Rar$ (ii) $\Rar$ (i) are trivial. We prove (i) $\Rar$ (iii). Let $\lambda > 1$ and let $S$ be $\lambda$-stable for $\cL/K$ with $\lambda$-stabilizing field $L_0$. Then there is some $a > 0$ and a subset $S_0 \subseteq S$ such that $a \leq \delta_L(S_0) < \lambda a$ for all $\cL/L/L_0$. Suppose there is no $\epsilon > 0$, such that $\delta_{L}(S_0) > \epsilon$ for all $\cL/L/K$. This implies that there is a family $(M_i)_{i = 1}^{\infty}$ of finite subextensions of $\cL/K$ with $\delta_{M_i}(S_0) \rar 0$ as $i \rar \infty$. Then $d_i = [L_0 M_i : M_i] = [L_0 : L_0 \cap M_i]$ is bounded from above by $[L_0 : K]$ and hence by Lemma \ref{lm:Dichtepullback}

\[ \delta_{L_0 M_i}(S_0) = \sum_{m=1}^{d_i} m\delta_{M_i}(S_0 \cap P_m(L_0 M_i / M_i)) \leq [L_0 : K] \delta_{M_i}(S_0) \rar 0 \]

\noindent for $i \rar \infty$. This contradicts to the $\lambda$-stability of $S_0$ with respect to the $\lambda$-stabilizing field $L_0$.
\end{proof}

Here is a small overview of the use of these conditions and the examples in the practice:

\begin{itemize}
\item[-] The most examples of stable sets are given by (almost) Chebotarev sets, i.e. sets of the form $S \backsimeq P_{M/K}(\sigma)$, or sets containing them (cf. Section \ref{sec:ACS}).
\item[-] If $S$ is stable for $\cL/K$, then $\delta_L(S) > 0$ for all finite $\cL/L/K$. The converse is not true in general (cf. \cite{Iv} Section 3.5.4), but it is true for almost Chebotarev sets (cf. Section \ref{sec:ACS}).
\item[-] If an almost Chebotarev set is stable for an extension, then it is also persistent for it (cf. Corollary \ref{prop:stabioity_properties_of_ACS}). It is not clear whether there are examples of stable but not persistent sets (but cf. \cite{Iv} Section 3.5.4).
\item[-] For a stable almost Chebotarev set $S$, $E^{\rm sharp}(S)$ is finite and $E^{\rm strong}(S)$ is either $\Sigma_{\bQ}$ or finite (cf. Section \ref{sec:ex_propertiesast}).
\item[-] Roughly speaking, $p$-stability (for $\cL/K$) is enough to prove Hasse principles in dimension $1$ for $p$-primary ($\Gal_{\cL/K}$-)modules. Cf. Section \ref{sec:Shagroups_of_stable_sets}.
\item[-] To prove Hasse principles in dimension 2 and results of Grunwald-Wang type results for $p$-primary $\Gal_{K,S}$-modules, we need strong $p$-stability. We will give examples of persistent sets $S$ together with a finite set $T$ such that Grunwald-Wang (even stably) fails, i.e., $\coker^1(K_{S \cup T}/L, T; \bZ/p\bZ) \neq 0$ for all finite subextensions $K_S/L/K$. But it is not clear whether one can find such an example with additional requirement that $T \subseteq S$ (and necessarily $S$ being not strongly $p$-stable).  Cf. Section \ref{sec:GWT}.
\item[-] On the other side, for applications of Grunwald-Wang (i.e., to prove Riemann's existence theorem, to realize local extensions by $K_S/K$, to compute (strict) cohomological dimension, etc.), it is enough to require that $S$ is sharply $p$-stable. Cf. Sections \ref{sec:GWTRExT_overview}, \ref{sec:GWT_and_dagger} and \ref{sec:proof_of_dagger_results}.
\end{itemize}


\section{Examples} \label{sec:Examples_section}

In this section we construct examples of stable sets. First, in Section \ref{sec:Expls_Sets_of_delta_1} we see to which extent 'stable' is more general than 'of density 1'. Then, in Sections \ref{sec:ACS} and \ref{sec:ex_propertiesast} we introduce almost Chebotarev sets and determine when they are stable resp. strongly $p$-stable resp. sharply $p$-stable. Finally, in Section \ref{sec:stable_sets_with_NS=1} we construct a stable almost Chebotarev set $S$ with $\bN(S) = \{1\}$.

\subsection{Sets of density one}\label{sec:Expls_Sets_of_delta_1}

Stable and persistent sets generalize sets of density one. In particular, every set of primes of $K$ of density one is persistent for any extension $\cL/K$ with persisting field $K$ and is strongly $p$-stable for each $p$. Nevertheless, sets of density one have some properties, which stable resp. persistent sets do not have in general:

\begin{itemize}
 \item[(i)] The intersection of two sets of density one again has density one, which is not true for stable and persistent sets: the intersection of two sets persistent for $\cL/K$ can be empty (cf. Corollary \ref{cor:stability_of_ACS} and explicit examples below).
\item[(ii)] If $S \subseteq \Sigma_K$ has density one, then there are infinitely many primes $p \in \Sigma_{\bQ}$, such that $S_p \subseteq S$ (otherwise, for all primes $p \in \cs(K/\bQ)$ one could choose a prime $\fp \in S_p \sm S$ of $K$ and we would have $\delta_K(S) \leq 1 - [K^n:\bQ]^{-1}$, where $K^n$ denotes the normal closure of $K$ over $\bQ$). On the other side, it is easy to construct a persistent set $S \subseteq \Sigma_K$ with $\bN(S) = \{ 1 \}$, i.e., $S_{\ell} \not\subseteq S$ for all $\ell \in \Sigma_{\bQ}$ (cf. Section \ref{sec:stable_sets_with_NS=1} for an example).
\end{itemize}

Observe that for sets $S$ with $\bN(S) = \{ 1 \}$, mentioned above, no one of the $\ell$-adic representations $\rho_{A,\ell} \colon \Gal_K \rar \GL_d(\cool)$, which comes from an abelian variety $A/K$, factors through the quotient $\Gal_K \tar \Gal_{K,S}$ (indeed, the Tate-pairing on $A$ shows that the determinant of $\rho_{A,\ell}$ is the $\ell$-part of the cyclotomic character of $K$, and in particular, $\rho_{A,\ell}$ is highly ramified at all primes of $K$ lying over $\ell$. If $\rho_{A,\ell}$ would factor over $\Gal_{K,S}$, then we would have $S_{\ell} \subseteq S$). In particular, this makes it very hard, if not impossible, to study the group $\Gal_{K,S}$ via Langlands program (for example like in \cite{Ch} and \cite{CC}, where a prime $\ell \in \bN(S)$ is always necessary). If $S$ is additionally stable, then methods involving stability allow to study $\Gal_{K,S}$.


\subsection{Almost Chebotarev sets}\label{sec:ACS}

\begin{Def}
Let $K$ be a number field and $S$ a set of primes of $K$. Then $S$ is called a \textbf{Chebotarev set} (resp. an \textbf{almost Chebotarev set}), if $S = P_{M/K}(\sigma)$ (resp. $S \backsimeq P_{M/K}(\sigma)$), where $M/K$ is a finite Galois extension and $\sigma \in \Gal_{M/K}$.
\end{Def}

\begin{rem} $M$ and the conjugacy class of $\sigma$ are not unique, i.e., there are pairs $(M/K, \sigma)$, $(N/K, \tau)$ such that $M \neq N$ and $P_{M/K}(\sigma) \backsimeq P_{N/K}(\tau)$ (or even equal). If one restricts attention to pairs $(M/K,\sigma)$ such that $\sigma$ is central in $\Gal_{M/K}$, then $(M/K,\sigma)$ is indeed unique. Cf. \cite{Iv} Remark 3.13.
\end{rem}

\begin{prop}\label{prop:compdens}
Let $M/K$ be a finite Galois extension, $\sigma \in \Gal_{M/K}$ and $L/K$ any finite extension. Let $L_0 := L \cap M$. Then

\[
\delta_L(P_{M/K}(\sigma)_L) = \frac{ \abs{ C(\sigma; \Gal_{M/K}) \cap \Gal_{M/L_0} } }{ \abs{\Gal_{M/L_0}} }.
\]
\end{prop}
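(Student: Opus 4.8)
The plan is to reduce the statement to a single application of the Chebotarev density theorem over $L$ itself, using the compositum $N := ML$ (inside a fixed algebraic closure). First I recall the standard fact that $N/L$ is Galois and that restriction to $M$ induces an isomorphism $\iota\colon \Gal_{N/L} \xrightarrow{\sim} \Gal_{M/M\cap L} = \Gal_{M/L_0}$; I regard $\iota$ as an injection $\Gal_{N/L} \hookrightarrow \Gal_{M/K}$ with image $\Gal_{M/L_0}$. The local input I need is the compatibility of Frobenius with this restriction: if $\fP$ is a prime of $L$ unramified in $N$ with residue degree $1$ over $\bQ$, $\fp := \fP\cap K$, and $\fq$ a prime of $N$ above $\fP$, then $f(\fP/\fp) = 1$, hence $\iota(\Frob_{\fq,N/L}) = \Frob_{\fq\cap M, M/K}$, which lies in the Frobenius conjugacy class of $\fp$ in $\Gal_{M/K}$.

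Next I would observe that for computing $\delta_L$ one may discard the (Dirichlet-density-zero) set of primes of $L$ that ramify in $N$ or have residue degree $\geq 2$ over $\bQ$. For a prime $\fP$ not discarded, the previous paragraph shows that $\fP \in P_{M/K}(\sigma)_L$ if and only if $\iota$ carries the $\Gal_{N/L}$-conjugacy class of $\Frob_{\fP,N/L}$ into $C(\sigma;\Gal_{M/K})$; since $C(\sigma;\Gal_{M/K})$ is conjugation-stable in $\Gal_{M/K}$ and $\im\iota = \Gal_{M/L_0}$, the set $\{\rho\in\Gal_{N/L}\colon \iota(\rho)\in C(\sigma;\Gal_{M/K})\}$ is a union of conjugacy classes of $\Gal_{N/L}$, and we obtain
\[ P_{M/K}(\sigma)_L \backsimeq \bigcup_{\substack{C(\rho;\Gal_{N/L}) \\ \iota(\rho)\in C(\sigma;\Gal_{M/K})}} P_{N/L}(\rho) \qquad\text{(disjoint union).} \]
Applying the Chebotarev density theorem over $L$ to each term and summing (a finite sum), the density equals $|\Gal_{N/L}|^{-1}$ times the number of $\rho\in\Gal_{N/L}$ with $\iota(\rho)\in C(\sigma;\Gal_{M/K})$; as $\iota$ is a bijection onto $\Gal_{M/L_0}$, this is $|C(\sigma;\Gal_{M/K})\cap\Gal_{M/L_0}| \,/\, |\Gal_{M/L_0}|$, which is the asserted formula.

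I expect the only slightly delicate point to be the bookkeeping of residue degrees: one must restrict to primes $\fP$ of $L$ with $f(\fP/\bQ)=1$ so that $f(\fP/\fp)=1$ and the restricted Frobenius is a genuine Frobenius rather than a power of one, and one must check that all primes discarded along the way form a set of Dirichlet density $0$ so that the use of $\backsimeq$ is legitimate; the rest is formal. Alternatively, one can bypass the local computation and deduce the formula directly from Lemma \ref{lm:Dichtepullback}: choose a finite Galois $N/K$ containing $L$, set $G := \Gal_{N/K}$ and $H := \Gal_{N/L}$, note that $P_{M/K}(\sigma)\cap P_{N/K}(\tau)$ equals $P_{N/K}(\tau)$ or $\emptyset$ according to whether the image of $\tau$ in $\Gal_{M/K}$ lies in $C(\sigma;\Gal_{M/K})$, insert $\delta_K(P_{N/K}(\tau)) = |C(\tau;G)|/|G|$, and finish with the double-counting identity $\sum_{\tau\in G,\ \overline\tau\in C(\sigma)} m_H(\tau) = [G:H]\cdot|\{x\in H\colon \overline x\in C(\sigma;\Gal_{M/K})\}|$, which follows from $m_H(\tau) = |\{gH\colon g^{-1}\tau g\in H\}|$; this yields the same value $|C(\sigma;\Gal_{M/K})\cap\Gal_{M/L_0}|/|\Gal_{M/L_0}|$.
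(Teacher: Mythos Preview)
Your proof is correct, and both routes you sketch work. The paper takes a path closer to your second alternative but finishes with representation theory rather than double counting: it chooses a finite Galois $N/K$ containing $ML$, sets $G=\Gal_{N/K}$, $H=\Gal_{N/L}$, $\overline H=\Gal_{M/L_0}$, applies Lemma~\ref{lm:Dichtepullback} together with \cite{Wi}~Proposition~2.1 to write $\delta_L(P_{M/K}(\sigma)_L)=\langle m_H,\inf^G_{\Gal_{M/K}}\bold 1_\sigma\rangle_G$, and then uses Frobenius reciprocity and the behaviour of inner products under inflation to reduce this to $\langle \bold 1_{\overline H},\bold 1_\sigma|_{\overline H}\rangle_{\overline H}$, which is the stated ratio.

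Your primary approach is genuinely different and more elementary: by taking $N=ML$ and working directly over $L$, you replace the character-theoretic manipulation by a single application of Chebotarev to the Galois extension $N/L$, together with the standard compatibility of Frobenius under restriction (valid once you restrict to degree-one primes, as you note). This bypasses Lemma~\ref{lm:Dichtepullback}, induced characters and Frobenius reciprocity entirely; the paper's route, on the other hand, fits into its general framework of computing $\delta_L$ from data over $K$ via the functions $m_H$, which it uses elsewhere (e.g.\ in Section~\ref{sec:stablesets_and_Sha1_key_result}). Your double-counting identity at the end is exactly the combinatorial content of the Frobenius reciprocity step in the paper's argument, so your second alternative and the paper's proof are two phrasings of the same computation.
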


\noindent Thus $\delta_L(P_{M/K}(\sigma)_L) \neq 0$ if and only if $C(\sigma; \Gal_{M/K}) \cap \Gal_{M/L_0} \neq \emptyset$. In particular, this is always the case if $L_0 = K$ or if $\sigma = 1$.

\begin{proof}
Let $N/K$ be a finite Galois extension with $N \supseteq ML$. Let $H := \Gal_{N/L}$ and $\overline{H} := \Gal_{M/L_0}$. We have a natural surjection $H \tar \overline{H}$. Let $\bold{1}_{\sigma}$ denote the class function on $\Gal_{M/K}$, which has value $1$ on $C(\sigma; \Gal_{M/K})$ and 0 outside. Finally, let $m_H$ denote the character on $G := \Gal_{N/K}$ of the induced representation $\Ind_H^G \bold{1}_H$. Then we have (the first equality below follows from \cite{Wi} Proposition 2.1 and the second from Lemma \ref{lm:Dichtepullback}):
\begin{eqnarray*}
\delta_L(P_{M/K}(\sigma)_L) &=& \sum_{C(g; G) \mapsto C(\sigma; \Gal_{M/K})} \delta_L(P_{N/K}(g)_L) \\
&=& \sum_{C(g; G) \mapsto C(\sigma; \Gal_{M/K})} m_H(g) \delta_K(P_{N/K}(g)) \\
&=& \sum_{C(g; G) \mapsto C(\sigma; \Gal_{M/K})} m_H(g) \frac{ \abs{C(g;G)} }{ \abs{ G } } \\
&=& \frac{1}{ \abs{G} } \sum_{g \mapsto C(\sigma; \Gal_{M/K}) } m_H(g)  \\
&=& \langle m_H, \inf\nolimits_{\Gal_{M/K}}^G \bold{1}_{\sigma} \rangle_G \\
&=& \langle \bold{1}_H, \inf\nolimits_{\Gal_{M/K}}^H \bold{1}_{\sigma} \rangle_H \\
&=& \langle \bold{1}_{\overline{H}}, \bold{1}_{\sigma}|_{\overline{H}} \rangle_{\overline{H}} \\
&=& \frac{ \abs{ C(\sigma; \Gal_{M/K}) \cap \overline{H} } }{ \abs{\overline{H}} },
\end{eqnarray*}
\noindent where the third-to-last equality is Frobenius reciprocity, and the second-to-last equality follows from the easy fact that if $H \tar \overline{H}$ is a surjection of finite groups, $\chi, \rho$ are two characters of $\overline{H}$, then $\langle \inf\nolimits_{\overline{H}}^H \chi, \inf\nolimits_{\overline{H}}^H \rho \rangle_H = \langle \chi, \rho \rangle_{\overline{H}}$.
\end{proof}

\begin{cor}\label{cor:stability_of_ACS}
Let $M/K$ be finite Galois and let $\sigma \in \Gal_{M/K}$. Let $\cL/K$ be any extension and set $L_0 := M \cap \cL$. Then a set $S \backsimeq P_{M/K}(\sigma)$ is persistent for $\cL/K$ if and only if \[C(\sigma ; \Gal_{M/K}) \cap \Gal_{M/L_0} \neq \emptyset.\] If this is the case, $L_0$ is a persistent field for $S$ for $\cL/K$. In particular,
\begin{itemize}
\item[(i)]  any set $S \backsimeq \cs(M/K)$ is persistent for any extension $\cL/K$,
\item[(ii)] any set $S \backsimeq P_{M/K}(\sigma)$ is persistent for any extension $\cL/K$ with $\cL \cap M = K$.
\end{itemize}
\end{cor}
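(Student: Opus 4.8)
The plan is to reduce everything to the density computation in Proposition~\ref{prop:compdens}, which already contains the essential input. First I would record two elementary facts. Since $M/K$ is finite, $L_0 = M \cap \cL$ is a \emph{finite} subextension of $\cL/K$; and for every intermediate field $\cL/L/L_0$ one has $L \cap M = L_0$, because $L_0 = L_0 \cap M \subseteq L \cap M \subseteq \cL \cap M = L_0$. Next, since $P_{M/K}(\sigma) \backsimeq S$, Lemma~\ref{prop:firstpropsofstabletrivs}(iii) (applied with the two sets in either order) shows that $S$ is persistent for $\cL/K$ if and only if $P_{M/K}(\sigma)$ is, and that any persisting field for $P_{M/K}(\sigma)$ is also one for $S$. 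So it suffices to prove the equivalence, and the assertion about $L_0$, with $S$ replaced by $P_{M/K}(\sigma)$.

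For the ``if'' direction, assume $C(\sigma;\Gal_{M/K}) \cap \Gal_{M/L_0} \neq \emptyset$. Applying Proposition~\ref{prop:compdens} to each finite $L$ with $\cL/L/L_0$ and using $L \cap M = L_0$, I get
\[
\delta_L(P_{M/K}(\sigma)_L) = \frac{\abs{C(\sigma;\Gal_{M/K}) \cap \Gal_{M/L_0}}}{\abs{\Gal_{M/L_0}}},
\]
which is a strictly positive number independent of $L$. Taking $S_0 := P_{M/K}(\sigma)$ itself as the witnessing subset, this is precisely the statement that $L_0$ is a persisting field for $P_{M/K}(\sigma)$ (hence for $S$) for $\cL/K$.

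For the ``only if'' direction I would argue by contraposition. If $C(\sigma;\Gal_{M/K}) \cap \Gal_{M/L_0} = \emptyset$, the displayed formula gives $\delta_L(P_{M/K}(\sigma)_L) = 0$ for every finite $\cL/L/L_0$. Suppose, for contradiction, that $P_{M/K}(\sigma)$ is persistent for $\cL/K$, with persisting field $\cL/L_1/K$ and witnessing subset $S_1 \subseteq P_{M/K}(\sigma)$. Set $L := L_0 L_1$; this is a finite extension of $K$ contained in $\cL$ and lying over both $L_0$ and $L_1$, so on the one hand $\delta_L(S_1) = \delta_{L_1}(S_1) > 0$, while on the other hand $(S_1)_L \subseteq P_{M/K}(\sigma)_L$ forces $\delta_L(S_1) \leq \delta_L(P_{M/K}(\sigma)_L) = 0$, a contradiction. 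Hence $P_{M/K}(\sigma)$, and therefore $S$, is not persistent for $\cL/K$. Finally, (i) and (ii) drop out as special cases: if $\sigma = 1$ then $1 \in C(\sigma;\Gal_{M/K}) \cap \Gal_{M/L_0}$, and if $M \cap \cL = K$ then the intersection equals $C(\sigma;\Gal_{M/K}) \neq \emptyset$.

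I do not expect a genuine obstacle: once Proposition~\ref{prop:compdens} is available the argument is bookkeeping. The only points needing a little care are that the relation $\backsimeq$ and both the existence and the value of Dirichlet densities are well behaved under pull-back along finite subextensions of $\cL/K$ --- which is exactly what Lemmas~\ref{lm:Dichtepullback} and~\ref{prop:firstpropsofstabletrivs} furnish, via the elementary observation that a subset of a set of density zero again has density zero --- and the fact that $L \cap M = L_0$ stays true for all $\cL/L/L_0$, which is what pins the density down to a constant along the tower and thereby drives the whole equivalence.
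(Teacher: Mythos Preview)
Your proof is correct and follows exactly the approach the paper intends: the corollary is stated without proof in the paper, as it is immediate from Proposition~\ref{prop:compdens} once one observes that $L\cap M=L_0$ for every finite $\cL/L/L_0$, so the density is constant along the tower $\cL/L_0$. Your contraposition for the ``only if'' direction is a clean direct version of what the paper later records (in the proof of Corollary~\ref{prop:stabioity_properties_of_ACS}) via Proposition~\ref{prop:stableequalbounded}; both arguments amount to the same observation that a witnessing subset $S_1$ would have positive density at $L_0L_1$ while being contained in a set of density zero there.
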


\begin{ex}(A persistent set) Let $K$ be a number field, $M/K$ a finite Galois extension, which is totally ramified in a prime $\fp$ of $K$. Let $\sigma \in \Gal_{M/K}$ and let $S$ be a set of primes of $K$, such that $S \backsimeq P_{M/K}(\sigma)$ and $\fp \not\in S$. Then $S$ is persistent with persisting field $K$. Indeed, we have $K_S \cap M = K$ by construction, and the claim follows from Corollary \ref{cor:stability_of_ACS}.
\end{ex}

\begin{cor}\label{prop:stabioity_properties_of_ACS}
Let $S$ be an almost Chebotarev set and $\cL/K$ an extension. Then the following are equivalent:
\begin{itemize}
\item[(i)]   $S$ is stable for $\cL/K$.
\item[(ii)]  $S$ is persistent for $\cL/K$.
\item[(iii)] $\delta_L(S) > 0$ for all finite $\cL/L/K$.
\end{itemize}

\end{cor}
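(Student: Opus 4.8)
The plan is to prove the cycle of implications (ii) $\Rightarrow$ (i) $\Rightarrow$ (iii) $\Rightarrow$ (ii). The first two are formal consequences of the definitions and of Proposition \ref{prop:stableequalbounded}, so essentially all the content sits in the implication (iii) $\Rightarrow$ (ii), which I would deduce directly from Corollary \ref{cor:stability_of_ACS}.

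For (ii) $\Rightarrow$ (i): if $L_0$ is a persisting field for $S$ for $\cL/K$, witnessed by a subset $S_0 \subseteq S$ with $\delta_L(S_0) = \delta_{L_0}(S_0) =: a > 0$ for all finite $\cL/L/L_0$, then $a \in (0,1]$ and, for any $\lambda > 1$, trivially $\lambda a > a = \delta_L(S_0) \geq a > 0$ on this range; so $L_0$ is $\lambda$-stabilizing (with the same $S_0$) for every $\lambda > 1$, and in particular $S$ is stable. For (i) $\Rightarrow$ (iii): by Proposition \ref{prop:stableequalbounded}, stability of $S$ for $\cL/K$ yields some $\epsilon > 0$ with $\delta_L(S) > \epsilon$ for all finite $\cL/L/K$, a fortiori $\delta_L(S) > 0$.

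The main step is (iii) $\Rightarrow$ (ii). Write $S \backsimeq P_{M/K}(\sigma)$ with $M/K$ finite Galois and $\sigma \in \Gal_{M/K}$, and put $L_0 := M \cap \cL$; since $M/K$ is finite, $L_0$ is a finite subextension of $\cL/K$. The idea is to test hypothesis (iii) only at the single field $L_0$ and compare with the density formula of Proposition \ref{prop:compdens}. First, $S \backsimeq P_{M/K}(\sigma)$ over $K$ implies $S_{L_0} \backsimeq P_{M/K}(\sigma)_{L_0}$ over $L_0$, because pull-back commutes with set difference and, by Lemma \ref{lm:Dichtepullback}, the pull-back of a set of $K$-primes of density $0$ again has density $0$; hence $\delta_{L_0}(S) = \delta_{L_0}(P_{M/K}(\sigma)_{L_0})$. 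Since $L_0 \subseteq M$, Proposition \ref{prop:compdens} applied to the finite extension $L_0/K$ (whose intersection with $M$ is $L_0$ itself) gives
\[ \delta_{L_0}(S) = \frac{\abs{C(\sigma;\Gal_{M/K}) \cap \Gal_{M/L_0}}}{\abs{\Gal_{M/L_0}}}. \]
By (iii) this is strictly positive, so $C(\sigma;\Gal_{M/K}) \cap \Gal_{M/L_0} \neq \emptyset$, which is exactly the criterion in Corollary \ref{cor:stability_of_ACS} for $S$ to be persistent for $\cL/K$ (in fact with persisting field $L_0$).

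There is no genuinely hard part; the only point demanding a little care — and the nearest thing to an obstacle — is the compatibility of $\backsimeq$ with pull-back along the finite extension $L_0/K$, i.e. that a density-zero set of $K$-primes stays density-zero after pull-back to $L_0$. This is the degenerate case of Lemma \ref{lm:Dichtepullback}, and one should also invoke Convention \ref{conv:density_ex} to ensure that all the densities appearing genuinely exist. Everything else is a direct citation of Propositions \ref{prop:stableequalbounded} and \ref{prop:compdens} and Corollary \ref{cor:stability_of_ACS}.
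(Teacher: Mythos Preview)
Your proof is correct and follows essentially the same approach as the paper: both reduce everything to Proposition \ref{prop:compdens} applied at the field $L_0 = M \cap \cL$ together with Proposition \ref{prop:stableequalbounded}. The paper packages this as a dichotomy (the constant density $d$ in the tower $\cL/L_0$ is either $0$, in which case (i)--(iii) all fail, or $>0$, in which case all hold) rather than your cycle (ii) $\Rightarrow$ (i) $\Rightarrow$ (iii) $\Rightarrow$ (ii), but this is purely a matter of presentation.
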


\begin{proof}
Let $S \backsimeq P_{M/K}(\sigma)$ with a finite Galois extension $M/K$ and $\sigma \in \Gal_{M/K}$. By Proposition \ref{prop:compdens}, the density of $S$ is constant and equal to some $d \geq 0$ in the tower $\cL/L_0$ with $L_0 = \cL \cap M$. There are two cases: either $d = 0$ or $d > 0$. If $d = 0$, then $S$ is not stable and hence also not persistent for $\cL/K$ by Proposition \ref{prop:stableequalbounded}, i.e., (i), (ii) and (iii) do not hold in this case. If $d > 0$, then $S$ is obviously persistent for $\cL/K$ with persisting field $L_0$ and hence also stable, i.e., (i),(ii),(iii) hold. \qedhere

\end{proof}

\begin{rem} If $S$ is \emph{any} stable set, then (ii) $\Rar$ (i) $\Rar$ (iii) still holds. But (iii) $\Rar$ (i) fails in general (cf. \cite{Iv} Section 3.5.4) and it is not clear whether (i) $\Rar$ (ii) holds.
\end{rem}


\subsection{Finiteness of $E^{\rm sharp}(S)$ and $E^{\rm strong}(S)$. Examples} \label{sec:ex_propertiesast}

\begin{prop}\label{prop:finitnessofES} Let $S \backsimeq P_{M/K}(\sigma)$ with $\sigma \in \Gal_{M/K}$.
\begin{itemize}
\item[(i)] If $\infty \in E^{\rm strong}(S)$, then $E^{\rm strong}(S)$ contains all rational primes. If $\infty \not\in E^{\rm strong}(S)$, then $E^{\rm strong}(S)$ is finite.



\item[(ii)] Assume $S$ is stable. If $\mu_p \subset K_S$ or $M/K$ unramified in $S_p \sm S$, then $S$ is sharply $p$-stable. In particular, if $S$ is stable, then $E^{\rm sharp}(S)$ is finite.
\end{itemize}
\end{prop}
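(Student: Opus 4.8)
The plan is to reduce both parts, via Proposition~\ref{prop:compdens} and Corollaries~\ref{cor:stability_of_ACS} and~\ref{prop:stabioity_properties_of_ACS}, to combinatorics on the finite lattice of subfields of $M$ together with bookkeeping on ramification loci. Fix $P := P_{M/K}(\sigma)$, so $S \backsimeq P$. Since a density-zero set of primes pulls back to a density-zero set, one has $\delta_L(S) = \delta_L(P)$ for every finite $L/K$, and Proposition~\ref{prop:compdens} then reads $\delta_L(S) = d(L \cap M)$ with $d(F) := \abs{C(\sigma;\Gal_{M/K}) \cap \Gal_{M/F}} / \abs{\Gal_{M/F}}$ for a subfield $F$ of $M$. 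I will use two features of $d$: it satisfies $d(F) > 0 \Leftrightarrow C(\sigma;\Gal_{M/K}) \cap \Gal_{M/F} \neq \emptyset$, and if $F \subseteq F'$ then $\Gal_{M/F} \supseteq \Gal_{M/F'}$, so $d(F) = 0$ forces $d(F') = 0$. By Corollaries~\ref{cor:stability_of_ACS} and~\ref{prop:stabioity_properties_of_ACS}, $S$ is stable (equivalently persistent) for an extension $\cL/K$ iff $d(M \cap \cL) > 0$, and then $M \cap \cL$ is a persisting field; moreover a persistent set is $\lambda$-stable for every $\lambda > 1$ straight from Definition~\ref{def:stable_and_persisting_sets} (its density is constant above the persisting field), so for an almost Chebotarev set ``stable for $\cL/K$'' already gives ``$p$-stable for $\cL/K$'' for all primes $p$. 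Finally, any subfield $F$ of $M$ has $\Ram(F/K) \subseteq \Ram(M/K)$, a fixed finite set.

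For part~(i), put $\cL_{\infty} := K_{S \cup S_{\infty}}$ and $\cL_p := K_{S \cup S_p \cup S_{\infty}}$; then $K_S \subseteq \cL_{\infty} \subseteq \cL_p$, hence $M \cap K_S \subseteq M \cap \cL_{\infty} \subseteq M \cap \cL_p$. If $\infty \in E^{\ast}(S)$, then $d(M \cap \cL_{\infty}) = 0$, so $d(M \cap \cL_p) = 0$ for every finite $p$ by monotonicity, so $S$ is not even stable for $\cL_p/K$ and $(\ast)_p$ fails; thus $E^{\ast}(S)$ contains all rational primes. If $\infty \notin E^{\ast}(S)$, then $d(M \cap \cL_{\infty}) > 0$. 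For every $p$ outside the finite set $\{p : S_p \cap \Ram(M/K) \neq \emptyset\}$, a subfield $F$ of $M$ lies in $\cL_p$ iff $\Ram(F/K) \subseteq S \cup S_p \cup S_{\infty}$ iff (as $\Ram(F/K) \subseteq \Ram(M/K)$ misses $S_p$) $\Ram(F/K) \subseteq S \cup S_{\infty}$ iff $F \subseteq \cL_{\infty}$, so $M \cap \cL_p = M \cap \cL_{\infty}$. Now I would choose $S_0 := S$ and the stabilizing field $L_0 := M \cap K_S$, which lies in $K_S$: as $L$ ranges over the finite subextensions of $\cL_p/L_0$, the field $L \cap M$ ranges inside the $p$-independent finite set $\mathcal{F} := \{F \text{ subfield of } M : M \cap K_S \subseteq F \subseteq M \cap \cL_{\infty}\}$, so $\delta_L(S) = d(L \cap M) \in [a,b]$ with $a := \min_{F \in \mathcal{F}} d(F) > 0$ and $b := \max_{F \in \mathcal{F}} d(F) \leq 1$, both independent of $p$. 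For $p > b/a$ this yields $\delta_L(S) \in [a, pa)$ for all such $L$, so $(\ast)_p$ holds, and therefore $E^{\ast}(S) \subseteq \{p : S_p \cap \Ram(M/K) \neq \emptyset\} \cup \{p : p \leq b/a\}$ is finite.

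For part~(ii), assume $S$ is stable, so $S$ is persistent for $K_S/K$ and $d(M \cap K_S) > 0$. If $\mu_p \subseteq K_S$: persistence makes $S$ $p$-stable for $K_S/K$, the first clause of $(\dagger)_p^{\rm rel}$ for $(S, K_S/K)$, so $(\dagger)_p$ holds. If $\mu_p \not\subseteq K_S$ (hence $p$ odd, so $K(\mu_p)/K$ ramifies only over $S_p$) and $M/K$ is unramified in $S_p \sm S$, i.e. $\Ram(M/K) \cap S_p \subseteq S$: then $K_S(\mu_p)/K$ ramifies only inside $S \cup S_p$, so $M \cap K_S(\mu_p)$ is a subfield of $M$ with $\Ram(M \cap K_S(\mu_p)/K) \subseteq \Ram(M/K) \cap (S \cup S_p) \subseteq S$, whence $M \cap K_S(\mu_p) \subseteq K_S$ and therefore $M \cap K_S(\mu_p) = M \cap K_S$; thus $d(M \cap K_S(\mu_p)) = d(M \cap K_S) > 0$, $S$ is persistent (in particular stable) for $K_S(\mu_p)/K$, and the second clause of $(\dagger)_p^{\rm rel}$ for $(S, K_S/K)$ holds, so again $(\dagger)_p$. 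For the ``in particular'': whenever $S_p \cap \Ram(M/K) = \emptyset$ (all but finitely many $p$), $M/K$ is unramified in $S_p \sm S$ and the above applies, so $E^{\dagger}(S) \subseteq \{p : S_p \cap \Ram(M/K) \neq \emptyset\}$ is finite.

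The one genuinely delicate step is the requirement in $(\ast)_p$ that the $p$-stabilizing field lie in $K_S$ and not merely in $\cL_p$: this forbids the optimal choice $L_0 = M \cap \cL_p$ and forces the estimate on the oscillation ratio $b/a$ of $\delta_L(S)$ over the tower $M \cap K_S \subseteq \dots \subseteq M \cap \cL_{\infty}$. Since that ratio is a fixed finite number that may well be $\geq 2$, one only obtains $(\ast)_p$ once $p > b/a$, which is precisely the reason $E^{\ast}(S)$ — in contrast to $E^{\dagger}(S)$ — is finite but in general not empty. Everything else is routine manipulation of ramification loci inside the fixed finite extension $M/K$.
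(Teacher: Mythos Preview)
Your proof of part~(i) is correct and follows the same strategy as the paper: reduce via Proposition~\ref{prop:compdens} to the finitely many values $d(F)$ for $F$ between $K$ and $M$, observe that $M\cap\cL_p=M\cap\cL_\infty$ for all $p$ whose primes do not meet $\Ram(M/K)$, and then bound the oscillation of the density. The paper takes $K$ as the $p$-stabilizing field and uses the single bound $p>[L_0:K]\,d_0^{-1}$ (with $L_0=M\cap\cL_\infty$), while you take $M\cap K_S$ and the ratio $b/a$; these are cosmetic differences.

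For part~(ii) your overall plan --- show $M\cap K_S(\mu_p)=M\cap K_S$ and then invoke Corollary~\ref{cor:stability_of_ACS} --- is exactly the paper's, but your ramification shortcut has a gap at the archimedean places. The assertion ``$K(\mu_p)/K$ ramifies only over $S_p$'' is false when $K$ has a real place: for odd $p$ the extension $K(\mu_p)/K$ ramifies at every real place as well. Consequently your bound becomes
\[
\Ram\bigl(M\cap K_S(\mu_p)/K\bigr)\ \subseteq\ \Ram(M/K)\cap(S\cup S_p\cup S_\infty)\ \subseteq\ S\cup\bigl(\Ram(M/K)\cap S_\infty\bigr),
\]
and without the extra hypothesis that $M/K$ is unramified at the real places of $K$ outside $S$ you cannot conclude $M\cap K_S(\mu_p)\subseteq K_S$. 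The paper avoids this by a more structural argument: it introduces $L_0'=L_0(\mu_p)\cap K_S$, uses the product decomposition $\Gal_{K_S(\mu_p)/L_0'}\cong\Gal_{K_S/L_0'}\times\Gal_{L_0(\mu_p)/L_0'}$, and shows that $L_0(\mu_p)/L_0'$ has no nontrivial subextension unramified in $S_p\sm S$; this is what forces $L_p:=M\cap K_S(\mu_p)$ down into $K_S$. Your direct ramification count is cleaner when it works (e.g.\ $K$ totally imaginary, or $S\supseteq S_\infty$, or $M$ totally real over the real places of $K$ outside $S$), but in general you need the extra Galois-theoretic step the paper supplies.
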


\begin{proof}
(i): If $\infty \in E^{\rm strong}(S)$, then $S$ does not have a stabilizing field for $K_{S \cup S_{\infty}}/K$, which is contained in $K_S$. This is by Proposition \ref{prop:stableequalbounded} equivalent to the fact that $S$ is not stable for $K_{S \cup S_{\infty}}/K$, which in turn is equivalent by Corollary \ref{prop:stabioity_properties_of_ACS} to the fact that $\delta_L(S) = 0$ for all $K_{S \cup S_{\infty}}/L/L_0$, where $L_0$ is some finite subextension of $K_{S \cup S_{\infty}}/K$. Thus $p \in E^{\rm strong}(S)$ for any $p$.

Now assume $\infty \not\in E^{\rm strong}(S)$. Let $L_0 := M \cap K_{S \cup S_{\infty}}$ and $L_p := M \cap K_{S \cup S_p \cup S_{\infty}}$. By Proposition \ref{prop:compdens}, the density of $S$ is constant in the towers $K_{S \cup S_{\infty}}/L_0$ and $K_{S \cup S_p \cup S_{\infty}}/L_p$ and equal to some real numbers $d_0$ and $d_p$ respectively. Since $S$ is stable for $K_{S \cup S_{\infty}}/K$, we have $d_0 > 0$.

We claim that for almost all $p$'s we have $L_p = L_0$. More precisely, this is true for all $p$'s, such that the set

\[ \{\fp \in (S_p \sm S)_{L_0} \colon \fp \text{ is ramified in } M/L_0 \}. \]

\noindent is empty. In fact, if this set is empty for $p$, then the extension $L_p/L_0$ is unramified in $S_p \sm S(L_0)$, since contained in $M/L_0$. But being contained in $K_{S \cup S_p \cup S_{\infty}}$ and unramified in $S_p \sm S(L_0)$, it is contained in $K_{S \cup S_{\infty}}$, and hence also in $M \cap K_{S \cup S_{\infty}} = L_0$, which proves our claim.

Let now $p$ be such that $L_p = L_0$. Then we claim that $S$ is $([L_0:K]d_0^{-1})$-stable for $K_{S \cup S_p \cup S_{\infty}}/K$ with $([L_0:K]d_0^{-1})$-stabilizing field $K$. Indeed, as $L_p = L_0$, we have $d_p = d_0 > 0$. Let $K_{S \cup S_p \cup S_{\infty}}/N/K$ be any finite subextension. We have

\[ d_0 = \delta_{L_0 N}(S) = [L_0 N : N]\delta_N(S \cap \cs(L_0 N/N)) \leq [L_0 : K] \delta_N(S), \]

\noindent i.e., $\delta_N(S) \geq [L_0 : K]^{-1} d_0$ for all $N$, and our claim follows.

Finally, almost all primes satisfy $p > [L_0 : K] d_0^{-1}$ and $L_p = L_0$. For such primes $S$ is $p$-stable for $K_{S \cup S_p \cup S_{\infty}}/K$ with stabilizing field $K$.

(ii): the second assertion of (ii) follows from the first. If $\mu_p \subseteq K_S$, then $S$ is sharply $p$-stable by Corollary \ref{prop:stabioity_properties_of_ACS}. Assume $M/K$ is unramified in $S_p \sm S$. Let $L_0 := M \cap K_S$, $L_0^{\prime} := L_0(\mu_p) \cap K_S$ and $L_p := M \cap K_S(\mu_p)$. From these definitions resp. from our assumption on $M/K$ we have: (1) $\Gal_{K_S(\mu_p)/L_0^{\prime}} \cong \Gal_{K_S/L_0^{\prime}} \times \Gal_{L_0(\mu_p)/L_0^{\prime}}$ and $L_0(\mu_p)/L_0^{\prime}$ has no subextension unramified in $S_p \sm S$, (2) $L_p \cap K_S = L_0$ and (3) $L_p/L_0$ is unramified in $S_p \sm S$. By (3) the extension $L_p.L_0^{\prime}/L_0^{\prime}$ is unramified in $S_p \sm S$ and by (1) we get $L_p \subseteq L_p.L_0^{\prime} \subseteq K_S$. Hence (2) gives $L_p = L_0$. Thus for all $K_S(\mu_p)/L/L_0$ we have by Proposition \ref{prop:compdens}: $\delta_L(S) = \delta_{L_p}(S) = \delta_{L_0}(S) > 0$ since $S$ is stable. \qedhere

\end{proof}

\begin{rem} Let $S \backsimeq P_{M/K}(\sigma)$. We have the following equivalences:
\[ p \not\in E^{\rm sharp}(S) \LRar \text{$S$ stable for $K_S(\mu_p)/K$ } \LRar C(\sigma ; \Gal_{M/K}) \cap \Gal(M/M \cap K_S(\mu_p))\neq \emptyset. \]

\end{rem}

\begin{ex}(Persistent sets with $E^{\rm strong}(S)$ finite but non-empty)\label{ex:pers_ES_fin_and_nonempty}
Let $K$ be a totally imaginary number field and let $M/K$ be a finite Galois extension, which satisfies the following conditions:
\begin{itemize}
\item $M/K$ is totally ramified in a prime $\fp \in S_p(K)$,
\item $d := [M:K] > p$.
\end{itemize}

\noindent Let $\sigma \in \Gal_{M/K}$ and let $S$ be a set of primes of $K$, such that
\begin{itemize}
\item $S \backsimeq P_{M/K}(\sigma)$,
\item $\Ram(M/K) \sm S = \{ \fp \}$.
\end{itemize}

\noindent Then $S$ is persistent ($\delta_L(S) = d^{-1}$ for all $K_S/L/K$) with persisting field $K$. Further, $S$ is not strongly $p$-stable, i.e., $p \in E^{\rm strong}(S)$ and $\infty \not\in E^{\rm strong}(S)$, i.e., $E^{\rm strong}(S)$ is finite by Proposition \ref{prop:finitnessofES}. Indeed, $M \subseteq K_{S \cup S_p \cup S_{\infty}}$ and there are two cases $\sigma = 1$ or $\sigma \neq 1$. In the second case, the density of $S$ in $K_{S \cup S_p \cup S_{\infty}}/K$ is zero beginning from $M$, hence $S$ is non-stable for this extension, and $S$ is not strongly $p$-stable. In the first case, we have $\delta_L(S) = 1$ for all $K_{S \cup S_p \cup S_{\infty}}/L/M$. Assume there is a $p$-stabilizing field $N \subseteq K_S$ for $S$ for $K_{S \cup S_p \cup S_{\infty}}/K$, i.e., there is some $S_0 \subseteq S$ and some $a \in (0,1]$ with $a \leq \delta_L(S_0) < pa$ for all $K_{S \cup S_p \cup S_{\infty}}/L/N$. But this leads to a contradiction. Indeed,

\[ \delta_{MN}(S_0) = [MN:N]\delta_N(S_0 \cap \cs(MN/N)) = [M:K]\delta_{N}(S_0) \geq p\delta_N(S_0),\]

\noindent since $N \cap M = K$ and $S_0 \subseteq S \backsimeq \cs(M/K)$.
\end{ex}

\begin{ex}(Persistent sets with $E^{\rm strong}(S) = \emptyset$)  \label{ex:perswithemptyE1} Let $M/K$ be a finite Galois extension of degree $d$ with $K$ totally imaginary, which is totally ramified in at least two primes $\fp$ resp. $\fl$ with different residue characteristics $\ell_1$ resp. $\ell_2$. Let $S \backsimeq P_{M/K}(\sigma)$ for some $\sigma \in \Gal_{M/K}$, such that $\fp, \fl \not\in S$. Then $M \cap K_S = K$, hence $S$ is persistent with persisting field $K$. Let $p$ be a rational prime. Then $M \cap K_{S \cup S_p \cup S_{\infty}} = K$, since $M/K$ is totally ramified over primes with \emph{different} residue characteristics $\ell_1$ and $\ell_2$. Hence $S$ is strongly $p$-stable for every prime $p$ and $K$ is a persisting field for $S$ for $K_{S \cup S_p \cup S_{\infty}}/K$.
\end{ex}

\begin{ex}(Persistent sets with $E^{\rm strong}(S) = \emptyset$) \label{ex:perswithemptyE2} There is also another possibility to construct sets $S$ with $E^{\rm strong}(S) = \emptyset$, using the same idea as in the preceding example. Assume for simplicity that $K$ is totally imaginary. Let $M_1, M_2/K$ be two Galois extensions of $K$, and $\sigma_1 \in \Gal_{M_1/K}, \sigma_2 \in \Gal_{M_2/K}$. Assume $M_i/K$ is totally ramified in a non-archimedean prime $\fp_i$ of $K$, such that the residue characteristics of $\fp_1, \fp_2$ are unequal. Then let $S$ be a set of primes of $K$, such that
\begin{itemize}
\item $S \supsetsim P_{M_1/K}(\sigma_1) \cup P_{M_2/K}(\sigma_2)$,
\item $\{ \fp_1, \fp_2 \} \not\in S$.
\end{itemize}

\noindent Then, by the same reasoning as in the preceding example, $S$ is persistent with persisting field $K$ and $E^{\rm strong}(S) = \emptyset$. Moreover for each rational prime $p$, the field $K$ is persisting for $S$ for $K_{S \cup S_p \cup S_{\infty}}/K$.
\end{ex}

\subsection{Stable sets with $\bN(S) = \{ 1 \}$}\label{sec:stable_sets_with_NS=1}

Let $M/K/K_0$ be two finite Galois extensions of a number field $K_0$. Then the natural map $\Gal_{M/K_0} \rar \Aut(\Gal_{M/K})$ induces an exterior action

\[ \Gal_{K/K_0} \rar \Out(\Gal_{M/K}), \]

\noindent thus inducing a natural action of $\Gal_{K/K_0}$ on the set of all conjugacy classes of $\Gal_{M/K}$. For any $g \in \Gal_{K/K_0}$ and $\sigma \in \Gal_{M/K}$, we choose a representative of the conjugacy class $g.C(\sigma; \Gal_{M/K})$ and denote it by $g.\sigma$. Further, $\Gal_{K/K_0}$ acts naturally on $\Sigma_K$, and we have

\[ g.P_{M/K}(\sigma) = P_{M/K}(g.\sigma). \]

\noindent Let $K_0 = \bQ$ and let $\sigma \in \Gal_{M/K}$ be an element, such that $C(\sigma; \Gal_{M/K})$ is not fixed by the action of $\Gal_{K/\bQ}$. Let then
\[ S := \cs(K/\bQ)_K \cap P_{M/K}(\sigma). \]

\noindent If $p \in \Sigma_{\bQ,f} \sm \cs(K/\bQ)$, then $S \cap S_p = \emptyset$. If $p \in \cs(K/\bQ)$ such that $S_p \cap S \neq \emptyset$, then the action of $g \in \Gal_{K/K_0}$, chosen such that $C(\sigma; \Gal_{M/K}) \neq C(g.\sigma; \Gal_{M/K})$, defines an isomorphism between the disjoint sets $S_p \cap P_{M/K}(\sigma)$ and $S_p \cap P_{M/K}(g.\sigma)$, hence the last of these two sets is non-empty. From this we obtain $S_p \not\subseteq S$. Thus $\bN(S) = \{ 1 \}$. Moreover, if we choose $\sigma$ such that the stabilizer of $C(\sigma; \Gal_{M/K})$ in $\Gal_{K/\bQ}$ is trivial, then for any $p$ the intersection $S_p \cap S$ is either empty or contains exactly one element.

Now we have to choose $M$ in a way such that $S$ is stable. This is easy: e.g, take $M/K$ such that it is totally ramified in a fixed prime, which is (by definition of $S$) not contained in $S$. Then $K_S \cap M =  K$, i.e., $S$ is stable for $K_S/K$ with stabilizing field $K$, as $\delta_K(\cs(K/\bQ)_K) = 1$ and hence $S \backsimeq P_{M/K}(\sigma)$.




\section{Shafarevich-Tate groups of stable sets}\label{sec:Shagroups_of_stable_sets}

In this section we generalize many Hasse principles to stable sets and additionally prove finiteness resp. uniform bounds of certain Shafarevich-Tate groups associated with stable sets. The main result is the Hasse principle in Theorem \ref{thm:keyresult}. Further, there are two variants of uniform bounds on the size of $\Sha^i$: on the one side one can vary the coefficients, and on the other side the base field. We study both variants, the first in Section \ref{sec:Finitness_of_Sha_div_coeff} and the second in Section \ref{sec:unif_bounds_on_Sha1_field_var}. These results are used in later sections.

\subsection{Stable sets and $\Sha^1$: key result} \label{sec:stablesets_and_Sha1_key_result}

Let $K$ be a number field and $\cL/K$ a (possibly infinite) Galois extension. Let $A$ be a finite $\Gal_{\cL/K}$-module. Let now $T$ be a set of primes of $K$. Consider the \textbf{$i$-th Shafarevich-Tate group} with respect to $T$:

\[ \Sha^i(\cL/K, T; A) := \ker(\res^i \colon \coh^i(\cL/K, A) \rar \prod_{\fp \in T} \coh^i(\cG_{\fp}, A) ), \]

\noindent where $\cG_{\fp} = \Gal_{ K_{\fp}^{\sep}/K_{\fp} }$ is the local absolute Galois group (the map $\res$ is essentially independent of the choice of this separable closure, and we suppress it in the notation). We also write $\Sha^i(K_S/K;A)$ instead of $\Sha^i(K_S/K, S; A)$. We denote by $K(A)$ the \textbf{trivializing extension} for $A$, i.e., the smallest field between $K$ and $\cL$, such that the subgroup $\Gal_{\cL/K(A)}$ of $\Gal_{\cL/K}$ acts trivially on $A$. It is a finite Galois extension of $K$.

Let $G$ be a finite group and $A$ a $G$-module. Following Serre \cite{Se}\S2 and Jannsen \cite{Ja}, let $\coh^i_{\ast}(G,A)$ be defined by exactness of the following sequence:
\[ 0 \rar \coh^i_{\ast}(G,A) \rar \coh^i(G,A) \rar \prod_{\stackrel{H \subseteq G}{\text{cyclic}}} \coh^i(H,A). \]

Our key result is the following theorem. All results in the following make use of this theorem in a crucial way.

\begin{thm}\label{thm:keyresult}
Let $K$ be a number field, $T$ a set of primes of $K$ and $\cL/K$ a Galois extension. Let $A$ be a finite $\Gal_{\cL/K}$-module. Assume that $T$ is $p$-stable for $\cL/K$, where $p$ is the smallest prime divisor of $\abs{A}$. Let $L$ be a $p$-stabilizing field for $T$ for $\cL/K$. Then:

\[ \Sha^1(\cL/L,T; A) \subseteq \coh^1_{\ast}(L(A)/L, A). \]

\noindent In particular, if $\coh^1_{\ast}(L(A)/L, A) = 0$, then $\Sha^1(\cL/L,T; A) = 0$.
\end{thm}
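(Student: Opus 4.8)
The plan is to reduce the statement about the possibly-infinite extension $\cL/L$ to a statement about the finite extension $L(A)/L$, and there to exploit $p$-stability via a counting/density argument that forces a Shafarevich class to vanish on every cyclic subgroup of $\Gal_{L(A)/L}$. First I would replace $L$ by $L(A)$-relative data: by inflation-restriction for the tower $\cL/L(A)/L$ one has $\coh^1(L(A)/L,A) \har \coh^1(\cL/L,A)$, since $\Gal_{\cL/L(A)}$ acts trivially on $A$ and $\Hom(\Gal_{\cL/L(A)},A)$ enters only through the next term; more to the point, a class $c \in \Sha^1(\cL/L,T;A)$ is already inflated from a genuinely finite layer, and after enlarging that layer I may assume $c$ comes from $\coh^1(F/L,A)$ for a finite Galois $F \supseteq L(A)$. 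The goal becomes: show the image of $c$ in $\coh^1(F/L,A)$, when restricted to every cyclic subgroup of $\Gal_{F/L}$, dies — equivalently $c \in \coh^1_\ast(F/L,A)$ — and then descend along $F/L(A)$ to land in $\coh^1_\ast(L(A)/L,A)$.

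The heart of the argument is local-to-global via Chebotarev. Fix a cyclic subgroup $\langle\sigma\rangle \subseteq \Gal_{F/L}$ and let $L_\sigma$ be its fixed field, a finite subextension of $F/L$; restricting $c$ to $\langle\sigma\rangle = \Gal_{F/L_\sigma}$ is the same as computing $\res_{L_\sigma/L}(c) \in \coh^1(F/L_\sigma,A)$. Now I would use that $c$ is locally trivial at all $\fp \in T$: for a prime $\fp$ of $L_\sigma$ lying above some prime in $T$ whose Frobenius in $\Gal_{F/L_\sigma}$ generates $\langle\sigma\rangle$ (such $\fp$ exist in abundance by Chebotarev, as $\langle\sigma\rangle$ is cyclic hence realized as a Frobenius), the decomposition group at $\fp$ in $\Gal_{F/L_\sigma}$ is all of $\langle\sigma\rangle$, so $\res_{\langle\sigma\rangle}(c)$ maps to zero in $\coh^1(\cG_\fp,A)$ — but for a prime unramified in $F$ with full cyclic decomposition group this restriction map $\coh^1(\langle\sigma\rangle,A) \to \coh^1(\cG_\fp,A)$ is injective on the unramified part, which contains the image of $c$. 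The subtlety is that one must know such $\fp$ can be chosen \emph{inside $T$} and not merely in $\Sigma_{L_\sigma}$; this is exactly where $p$-stability of $T$ enters: by Proposition \ref{prop:stableequalbounded} and Lemma \ref{lm:Dichtepullback}, stability of $T$ for $\cL/K$ relative to the stabilizing field $L$ guarantees $\delta_{L_\sigma}(T) > 0$, and in fact the $p$-bound on the ratio $\delta_{L'}(T_0)/a$ along the tower prevents $T$ from missing the relevant Chebotarev class. Concretely, one wants: the Chebotarev set in $L_\sigma$ of primes with Frobenius generating $\langle\sigma\rangle$ has density at least $1/|\langle\sigma\rangle|$ times something, while the "deficiency" of $T$ is controlled by the stability constant $\lambda = p$; since $p \le |\langle\sigma\rangle|$ is false in general, one instead argues that $T$ cannot avoid \emph{all} primes in this class without violating the density lower bound $\delta_{L_\sigma}(T_0) \ge a$.

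I expect the main obstacle to be precisely this last point — extracting from "$p$-stable with $\lambda = p$, $p$ the least prime dividing $|A|$" the combinatorial fact that $T$ meets every Frobenius-generating-$\langle\sigma\rangle$ Chebotarev class over $L_\sigma$. The mechanism should be: if $T$ (or rather the witnessing subset $T_0$) avoided such a class entirely, one could pass from $L_\sigma$ up to $L_\sigma$ adjoined a cyclic layer and find the density of $T_0$ drops by a factor $\ge p$ (because a degree-$p$ totally-inert-type step kills the contribution of that class), contradicting $\delta_{L'}(T_0) \ge a$ once iterated; the choice of $p$ as the \emph{smallest} prime divisor of $|A|$ is what makes the relevant cyclic quotients have order divisible by $p$, so the $\lambda = p$ window is tight. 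Once the class vanishes on all cyclic subgroups of $\Gal_{F/L}$ we have $c \in \coh^1_\ast(F/L,A)$; a standard inflation argument (the defining sequence of $\coh^1_\ast$ is functorial in the group, and cyclic subgroups of $\Gal_{L(A)/L}$ lift to cyclic subgroups of $\Gal_{F/L}$) then places the original class in $\coh^1_\ast(L(A)/L,A)$, giving the inclusion; the final "in particular" is immediate.
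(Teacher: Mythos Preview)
Your core idea—realize cyclic subgroups as decomposition groups of primes in $T$ via a density argument, then use local triviality—is correct and matches the paper. But there is a real gap in how you deploy $p$-stability, and your account of why $p$ must be the \emph{smallest} prime divisor of $|A|$ is mistaken.

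You attempt to show $c|_{\langle\sigma\rangle} = 0$ for \emph{every} cyclic $\langle\sigma\rangle \subseteq \Gal_{F/L}$ directly from $p$-stability, but the factor-of-$p$ density window cannot by itself control cyclic groups of arbitrary order; your phrase ``the relevant cyclic quotients have order divisible by $p$'' is not the point. The actual mechanism is: first reduce to $A$ being $\ell$-primary for a single prime $\ell \mid |A|$ (so $\ell \ge p$, and $p$-stable $\Rightarrow$ $\ell$-stable); then it suffices to treat cyclic $\ell$-subgroups $H$, since restriction to the $\ell$-Sylow detects $\coh^1$ with $\ell$-primary coefficients. For such $H$, if no generator of $H$ were a Frobenius at $T_0$, one compares the densities at $F^H$ and at its unique degree-$\ell$ overfield $F^{\ell H}$: using $m_{\ell H}(\sigma) = \ell\, m_H(\sigma)$ for $\sigma \in \ell H$ and Lemma~\ref{lm:Dichtepullback} one gets $\delta_{F^{\ell H}}(T_0) = \ell\cdot\delta_{F^H}(T_0)$, contradicting $\ell$-stability. (Note the density \emph{increases} along this step, not ``drops''.)

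The paper also organizes the reduction more simply than your detour through an auxiliary $F \supseteq L(A)$ followed by a descent. It uses the exact sequence of Lemma~\ref{lm:shortShaseq},
\[
0 \longrightarrow \Sha^1(L(A)/L,T;A) \longrightarrow \Sha^1(\cL/L,T;A) \longrightarrow \Sha^1(\cL/L(A),T;A),
\]
to split the problem into: (a) the finite layer $L(A)/L$, handled by the cyclic-$\ell$-subgroup argument above (Lemma~\ref{lm:Sha_in_coh_ast}); and (b) the trivial-module case over $L(A)$, where $\Sha^1(\cL/L(A),T;A) = \Hom(\Gal_{\cL/L(A)}^{T},A)$ and any nonzero homomorphism produces a cyclic extension of degree $\ge p$ completely split in $T_0$, so that the density jumps by at least $p$, again contradicting stability. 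This places $\Sha^1(\cL/L,T;A)$ directly inside $\coh^1_\ast(L(A)/L,A)$ with no descent step to verify.
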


\begin{lm}\label{lm:shortShaseq}
Let $\cL/L/K$ be two Galois extensions of $K$ and $T$ a set of primes of $K$. Let $A$ be a $\Gal_{\cL/K}$-module, such that for any $\fp \in T$ one has $A^{\Gal_{\cL/L}} = A^{D_{\fp, \cL/L}}$. Then there is an exact sequence
\[ 0 \rar \Sha^1(L/K, T; A^{\Gal_{\cL/L}}) \rar \Sha^1(\cL/K, T; A) \rar \Sha^1(\cL/L, T_L; A) \]
\end{lm}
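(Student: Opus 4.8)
The plan is to build the exact sequence from the inflation–restriction sequence in low degrees, attached to the normal subgroup $\Gal_{\cL/L} \subseteq \Gal_{\cL/K}$ and the module $A$. That sequence reads
\[
0 \rar \coh^1(L/K, A^{\Gal_{\cL/L}}) \stackrel{\infl}{\rar} \coh^1(\cL/K, A) \stackrel{\res}{\rar} \coh^1(\cL/L, A)^{\Gal_{L/K}},
\]
and I want to intersect each term with the appropriate Shafarevich subgroup. First I would check that inflation maps $\Sha^1(L/K,T;A^{\Gal_{\cL/L}})$ into $\Sha^1(\cL/K,T;A)$: this is local, and follows from the compatibility of inflation with the restriction maps to decomposition groups, since the composite $\coh^1(L/K,A^{\Gal_{\cL/L}}) \rar \coh^1(\cL/K,A) \rar \coh^1(\cG_\fp,A)$ factors through $\coh^1(D_{\fp,L/K}, A^{\Gal_{\cL/L}}) \rar \coh^1(\cG_\fp, A^{\Gal_{\cL/L}}) \rar \coh^1(\cG_\fp,A)$; a class killed in the first local group stays killed. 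Injectivity of inflation on these subgroups is inherited from injectivity on the full $\coh^1$.

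Next I would show $\res$ maps $\Sha^1(\cL/K,T;A)$ into $\Sha^1(\cL/L,T_L;A)$. Again this is purely formal: for a prime $\mathfrak{P}$ of $L$ above $\fp \in T$, the local decomposition group $\cG_{\mathfrak P}$ is an open subgroup of $\cG_\fp$ (after compatible choices of embeddings), and the restriction of a global class to $\cL/L$ followed by localization at $\mathfrak P$ equals the localization at $\fp$ followed by restriction from $\cG_\fp$ to $\cG_{\mathfrak P}$; so a globally-$T$-trivial class restricts to a $T_L$-trivial class.

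The only point where the hypothesis $A^{\Gal_{\cL/L}} = A^{D_{\fp,\cL/L}}$ for $\fp \in T$ enters is exactness in the middle, i.e. that a class in $\Sha^1(\cL/K,T;A)$ restricting to zero in $\Sha^1(\cL/L,T_L;A)$ already comes from $\Sha^1(L/K,T;A^{\Gal_{\cL/L}})$. By inflation–restriction such a class comes from some $\bar c \in \coh^1(L/K,A^{\Gal_{\cL/L}})$, and I must check $\bar c$ is locally trivial at each $\fp \in T$. Localizing $\bar c$ at $\fp$ gives a class in $\coh^1(D_{\fp,L/K}, A^{\Gal_{\cL/L}})$ whose image in $\coh^1(\cG_\fp, A)$ vanishes; I would run inflation–restriction locally for the normal subgroup $D_{\fp,\cL/L} \subseteq D_{\fp,\cL/K}$ and the module $A$: the vanishing of the image in $\coh^1(\cG_\fp,A)$ together with $A^{D_{\fp,\cL/L}} = A^{\Gal_{\cL/L}}$ forces the localization of $\bar c$ in $\coh^1(D_{\fp,L/K}, A^{\Gal_{\cL/L}})$ — note the fixed-point hypothesis is exactly what identifies the coefficient module of the local inflation source with $A^{\Gal_{\cL/L}}$ — to be zero, hence $\bar c \in \Sha^1(L/K,T;A^{\Gal_{\cL/L}})$.

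The main obstacle is the bookkeeping with decomposition groups and the compatibility of the various inflation–restriction sequences (global over $K$, global over $L$, and local at each $\fp$) under localization; the fixed-point hypothesis $A^{\Gal_{\cL/L}} = A^{D_{\fp,\cL/L}}$ is precisely the input that makes the local five-term sequence line up with the global one so that local triviality of the image in $\coh^1(\cG_\fp,A)$ propagates back to local triviality of $\bar c$. None of the individual steps is deep, but getting the embeddings and restriction maps consistent requires care.
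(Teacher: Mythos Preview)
Your proposal is correct and follows essentially the same approach as the paper: the paper's proof is stated as ``an easy and straightforward exercise'', and the intended argument (visible in the commented-out details) is precisely to compare the global inflation--restriction sequence for $\Gal_{\cL/L} \trianglelefteq \Gal_{\cL/K}$ with the local inflation--restriction sequences for $D_{\fp,\cL/L} \trianglelefteq D_{\fp,\cL/K}$, using the hypothesis $A^{\Gal_{\cL/L}} = A^{D_{\fp,\cL/L}}$ to identify the coefficient modules, and then take kernels of the resulting commutative diagram. Your explicit verification of exactness in the middle is exactly the diagram chase this entails.
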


\begin{proof}
The proof is an easy and straightforward exercise.
\end{proof}

\begin{lm}\label{lm:Sha_in_coh_ast}
Let $L/K$ be a finite Galois extension, $T$ a set of primes of $K$, and $A$ a finite $\Gal_{L/K}$-module and $i > 0$. Assume that $T$ is $p$-stable for $L/K$ with $p$-stabilizing field $K$, where $p$ is the smallest prime divisor of $\abs{A}$. Then
\[ \Sha^i(L/K, T; A) \subseteq \coh^i_{\ast}(L/K, A). \]
\end{lm}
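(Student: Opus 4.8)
The goal is to show that any class $\xi \in \Sha^i(L/K,T;A)$ restricts to zero on every cyclic subgroup of $G := \Gal_{L/K}$. The plan is to fix a cyclic subgroup $C \subseteq G$, let $F := L^C$ be the corresponding intermediate field, and translate the condition ``$\res^G_C \xi = 0$'' into a statement about primes of $F$ that split in a prescribed way. The key input is the Chebotarev density theorem combined with $p$-stability: since $T$ is $p$-stable for $L/K$ with stabilizing field $K$, there is a subset $T_0 \subseteq T$ and $a > 0$ with $a \le \delta_N(T_0) < pa$ for all $L/N/K$; in particular $\delta_F(T_0) > 0$, so $T_0$ (hence $T$) contains infinitely many primes of $K$ whose Frobenius conjugacy class in $G$ meets $C$ in a generator of $C$ --- equivalently, primes of $K$ having a degree-$1$ prime $\fq$ of $F$ above them with $D_{\fq,L/F} = C$. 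For such a prime, $\res^G_C\xi$ equals (up to the identification via $\fq$) the restriction of $\xi$ to the local cohomology at that prime of $F$, which vanishes because the same prime lies below a prime of $T$ and $\xi \in \Sha^i$.

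More precisely, I would argue as follows. Let $c$ be a generator of $C$. By Lemma~\ref{lm:P_i_setsZerlegung} and the Chebotarev density theorem, the set of $\fp \in \Sigma_K$ whose Frobenius class in $G$ is $C(c;G)$ has positive density; call it $P_{L/K}(c)$. I claim $\delta_K(T_0 \cap P_{L/K}(c)) > 0$ is \emph{not} what is needed --- rather, I need that $T$ itself contains such a prime. Here is the point where $p$-stability does the work: if $\delta_K(T \cap P_{L/K}(c)) = 0$, then removing $P_{L/K}(c)$ from $T$ does not change densities over $K$, but I must instead look at densities over $F$. The clean way: the primes of $F$ of degree $1$ over $K$ with decomposition group exactly $C$ in $L/F$ form, via Lemma~\ref{lm:Dichtepullback}, a positive-density contribution to $\delta_F(T_0)$ unless that contribution is zero; and if it \emph{were} zero for every generator of $C$, one computes as in the proof of Proposition~\ref{prop:stableequalbounded} that $\delta_{LN}(T_0)$ or $\delta_N(T_0)$ drops below $a$ for a suitable $N$ between $F$ and $L$, using $[L:F] \le $ (a power of $p$ is irrelevant --- what matters is $|C| \mid |A|$, so $|C| \ge p$) --- contradicting $p$-stability. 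I would isolate this counting as the technical heart of the argument.

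Once a prime $\fq$ of $F$ of degree $1$ over $K$ with $D_{\fq,L/F} = C$ lying over a prime of $T$ is produced, the cohomological step is formal: restriction $\coh^i(L/K,A) \to \coh^i(L/F,A) = \coh^i(C,A)$ composed with localization at $\fq$ is, by compatibility of restriction with localization, the same as localizing $\xi$ at the prime of $T$ below $\fq$ and then restricting to the decomposition group --- which is all of $C$ by choice of $\fq$. Since $\xi \in \Sha^i(L/K,T;A)$, its localization at that prime of $T$ already vanishes, hence $\res^G_C \xi = 0$. As $C$ was an arbitrary cyclic subgroup, $\xi \in \coh^i_{\ast}(L/K,A)$, which is the assertion.

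\textbf{Main obstacle.} The cohomological bookkeeping is routine; the real difficulty is the counting step --- verifying that $p$-stability (rather than mere positivity of density, which would \emph{not} suffice) forces, for \emph{every} cyclic $C \subseteq G$ with $|C|$ dividing $|A|$, the existence of a prime of $T$ with Frobenius meeting $C$ in a generator. The subtlety is that one has only the inequality $\delta_N(T_0) < pa$ to exploit, and $p$ is the \emph{smallest} prime divisor of $|A|$, so $|C| \ge p$; the argument must show that if all the relevant Frobenius classes avoided $T_0$, then passing from $F$ up to $L$ (or to an intermediate field where $T_0$'s density is controlled) would multiply the density by a factor $\ge p$ while keeping it below $pa$ --- the arithmetic of this has to be done carefully, exactly paralleling the contradiction derived at the end of the proof of Proposition~\ref{prop:stableequalbounded} and in Example~\ref{ex:pers_ES_fin_and_nonempty}.
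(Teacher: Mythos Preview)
Your strategy is essentially the paper's: show that every relevant cyclic subgroup $C \subseteq G = \Gal_{L/K}$ arises as the decomposition group of some prime in $T$, whence restriction of any $\xi \in \Sha^i$ to $C$ vanishes. The paper, however, inserts one simplification you omit. It first decomposes $A$ into $\ell$-primary parts and observes that, since restriction from a cyclic group to its $\ell$-Sylow is injective on $\ell$-primary cohomology, it suffices to treat cyclic $\ell$-subgroups for each prime $\ell \mid \abs{A}$; and since $p$ is the \emph{smallest} such $\ell$, $p$-stability automatically gives $\ell$-stability. The density argument then collapses to a single clean step: if a cyclic $\ell$-group $H$ is not the decomposition group of any prime in $T$, then no generator of $H$ is a Frobenius at $T_0$, and comparing $\delta_{L^H}(T_0)$ with $\delta_{L^{\ell H}}(T_0)$ (where $\ell H \subseteq H$ is the index-$\ell$ subgroup) via the identity $m_{\ell H}(\sigma) = \ell \cdot m_H(\sigma)$ for $\sigma \in \ell H$ and Lemma~\ref{lm:Dichtepullback} yields $\delta_{L^{\ell H}}(T_0) = \ell \cdot \delta_{L^H}(T_0) \geq p \cdot \delta_{L^H}(T_0)$, contradicting $p$-stability with stabilizing field $K$.

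Your sketch attempts to handle an arbitrary cyclic $C$ directly and is vague at exactly this point. Without the reduction to prime-power order, the claim ``$|C|$ dividing $|A|$, so $|C| \ge p$'' is not right (cyclic subgroups of $G$ have order dividing $|G|$, not $|A|$), and producing a prime in $T$ whose Frobenius has \emph{exact} order $|C|$ when $|C|$ is composite does not fall out of the $p$-stability inequality in any obvious way. With the primary reduction in place, your ``main obstacle'' dissolves into the three-line computation above; the analogies you draw with Proposition~\ref{prop:stableequalbounded} and Example~\ref{ex:pers_ES_fin_and_nonempty} are in the right spirit but are not the actual mechanism.
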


\begin{proof} Since any $p$-stable set is $\ell$-stable for all $\ell > p$, we can assume that $A$ is $p$-primary. We have to show that any cyclic $p$-subgroup of $\Gal_{L/K}$ is a decomposition subgroup of a prime in $T$. This is the content of the next lemma.
\end{proof}

\begin{lm}
Let $L/K$ be a finite Galois extension, $T$ a set of primes of $K$ and $p$ a rational prime, such that $T$ is $p$-stable for $L/K$ with $p$-stabilizing field $K$. Then any cyclic $p$-subgroup of $\Gal_{L/K}$ is the decomposition group of a prime in $T$.
\end{lm}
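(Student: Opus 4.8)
The plan is to show that a cyclic $p$-subgroup $C = \langle \sigma \rangle \subseteq \Gal_{L/K}$ occurs as a decomposition group of some prime $\fp \in T$ by combining the $p$-stability hypothesis with Chebotarev. First I would record the numerical fact: by Lemma \ref{lm:P_i_setsZerlegung} applied to the sub-fixed-field $F := L^C$ of $C$ (so $[L:F] = \abs{C}$, and $F$ corresponds to the subgroup $C$ of $G := \Gal_{L/K}$), the set $P_m(F/K)$ with $m = m_C(\one)$ — the number of cosets $gC$ fixed by $\langle 1\rangle$, i.e. $m = [G:C]$ — equals, up to density zero, the disjoint union $\bigcup P_{L/K}(\tau)$ over conjugacy classes $C(\tau;G)$ with $m_C(\tau) = [G:C]$. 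The crucial observation is that $m_C(\tau) = [G:C]$ forces every conjugate of $\langle\tau\rangle$ to lie in $C$; since $C$ is cyclic this means $\langle\tau\rangle \subseteq C$ up to conjugacy, i.e. the primes in $\cs(F/K)$ are exactly those whose Frobenius generates a subgroup conjugate to a subgroup of $C$. In particular $\cs(F/K) \backsimeq \bigcup_{\tau \in C} P_{L/K}(\tau)$ (disjoint over conjugacy classes meeting $C$), and the density of $\cs(F/K)$ in $K$ is $[F:K]^{-1} = \abs{C}/\abs{G}$.

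The second step is to feed in stability. Since $T$ is $p$-stable for $L/K$ with $p$-stabilizing field $K$, Proposition \ref{prop:stableequalbounded} gives some $\epsilon > 0$ with $\delta_{L'}(T) > \epsilon$ for every intermediate $L/L'/K$; apply this with $L' = F$. By Lemma \ref{lm:Dichtepullback}, $\delta_F(T) = \sum_{C(g;G)\subseteq G} m_C(g)\,\delta_K(T \cap P_{L/K}(g))$. Now $m_C(g) = \abs{\{gC : \langle g\rangle^g \subseteq C\}} \neq 0$ precisely when some conjugate of $g$ lies in $C$, i.e. when the conjugacy class of $g$ meets $C$. Hence if $T$ were to meet only density-zero pieces among the classes meeting $C$ — equivalently, if $\delta_K(T \cap P_{L/K}(g)) = 0$ for every $g$ with $C(g;G) \cap C \neq \emptyset$ — then $\delta_F(T) = 0$, contradicting $\delta_F(T) > \epsilon > 0$. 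Therefore there is at least one element $g_0$ whose conjugacy class meets $C$ with $\delta_K(T \cap P_{L/K}(g_0)) > 0$; in particular $T \cap P_{L/K}(g_0) \neq \emptyset$.

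The final step is to upgrade from "$T$ meets $P_{L/K}(g_0)$ for some $g_0$ with a conjugate in $C$" to "$T$ contains a prime with decomposition group exactly $C$". Here I want $g_0$ itself (after conjugation) to generate $C$, not merely a subgroup of $C$. To arrange this, rather than working with $F = L^C$ directly I would, for each proper subgroup $C' \subsetneq C$, also consider $F' = L^{C'}$ and note $\cs(F'/K) \backsimeq \bigcup_{\tau \in C'} P_{L/K}(\tau)$ has strictly smaller density; the union $\bigcup_{C' \subsetneq C} \cs(F'/K)$ still has density strictly less than $\delta_K(\cs(F/K))$ since $C$, being cyclic, has a unique maximal proper subgroup and one checks the densities add up correctly (inclusion–exclusion is trivial here because all relevant classes are inside the single subgroup $C$). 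So the "good" set $\cs(F/K) \sm \bigcup_{C'\subsetneq C}\cs(F'/K) \backsimeq \bigcup_{\langle\tau\rangle = C,\ \tau \in C} P_{L/K}(\tau)$ has positive density, and repeating the stability argument (compute $\delta_F(T \cap \text{this good set})$ via Lemma \ref{lm:Dichtepullback} and use $\delta_F(T) > \epsilon$, subtracting off the bounded contribution of the finitely many bad classes — note each bad $\cs(F'/K)$ pulled back to $F$ has density bounded away from the maximum) yields a prime $\fp \in T$, unramified in $L/K$, whose Frobenius generates a conjugate of $C$; replacing $\fp$ by the prime of $L$ below the chosen closure identifies its decomposition group with $C$.

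The main obstacle is precisely this last bookkeeping: going from "some subgroup of $C$" to "$C$ itself". The clean way is to phrase stability as: $\delta_F(T) > \epsilon$ for $F = L^C$, while for every proper $C' \subsetneq C$ the pull-back $\delta_F(T \cap P_{\le C'})$, where $P_{\le C'} := \bigcup_{\tau\in C'}P_{L/K}(\tau)$, is at most $[F:K]\,\delta_K(\cs(F'/K))/[F':K]$-type bound $\le [F:F']^{-1}\cdot(\text{something} \le 1)$, which is bounded by $\abs{C'}/\abs{C} \le 1/q < 1$ where $q$ is the smallest prime dividing $\abs C$ (so $q \ge p$). Choosing $\epsilon$ — we may shrink $T$ to the fixed subset $T_0$ from the definition of $p$-stability, which only helps — and using that $p$-stability gives the lower bound $a > 0$ uniformly, one gets $\delta_F(T_0 \cap P_{=C}) \ge a - \sum_{C'\subsetneq C}(\cdots) > 0$ provided $a$ beats the proper-subgroup contributions; the role of "$p = $ smallest prime divisor of $\abs A$" and "$p$-stable" ($\lambda = p$) is exactly to make this inequality go through, since the ratio between the full density and the largest proper-subgroup density is governed by the index of the maximal subgroup of the cyclic $p$-group $C$, which is $p$. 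Once this numerical inequality is in hand, the conclusion is immediate, and this is the lemma that powers Lemma \ref{lm:Sha_in_coh_ast} and hence Theorem \ref{thm:keyresult}.
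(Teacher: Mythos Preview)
Your approach eventually converges to the right idea, but the first two paragraphs are a detour and the final paragraph, where the real argument lives, is not carried out. Using only the weak bound $\delta_{L'}(T) > \epsilon$ from Proposition \ref{prop:stableequalbounded} can at best produce a prime in $T$ whose Frobenius lies in \emph{some} subgroup of $C$, as you note; the upgrade to a generator genuinely requires the two-sided bound $a \le \delta_{L'}(T_0) < pa$ from the definition of $p$-stability, which you only invoke at the end without completing the computation. In particular, the bound $\delta_F(T_0 \cap P_{\le C'}) \le \abs{C'}/\abs{C}$ you write down is true but useless, since $a$ may be much smaller than $1/p$; what you need is that the \emph{upper} stability bound at $L^{pC}$ controls the bad part at $F = L^C$.

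The paper's proof is exactly the clean execution of what you gesture at. Argue by contradiction: if no prime in $T$ has decomposition group conjugate to $H := C$, then $T_0 \cap P_{L/K}(\sigma)$ has density zero for every $\sigma \in H \sm pH$ (with $pH$ the unique index-$p$ subgroup). Now compute $\delta_{L_0}(T_0)$ and $\delta_{L_1}(T_0)$ for $L_0 = L^H$, $L_1 = L^{pH}$ via Lemma \ref{lm:Dichtepullback}; the elementary identity $m_{pH}(\sigma) = p\,m_H(\sigma)$ for $\sigma \in pH$ immediately yields $\delta_{L_1}(T_0) = p\,\delta_{L_0}(T_0) \ge pa$, contradicting $\delta_{L_1}(T_0) < pa$. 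No positivity argument, no inclusion--exclusion over proper subgroups, no subtraction of ``bad'' contributions is needed: the factor $p$ drops out of the induced-character identity in one line, and this is precisely why $\lambda = p$ is the correct threshold. Your argument, once made precise, would reduce to this computation; I would recommend discarding the scaffolding and writing it directly.
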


\begin{rem}
\begin{itemize}
\item[(i)] This shows automatically that there are infinitely many primes in $T$, for which the given cyclic group is a decomposition group.
\item[(ii)] In some sense this lemma 'generalizes' Chebotarev's density theorem, which says in particular, that if $S$ has density one and $L/K$ is finite Galois, then any element of $\Gal_{L/K}$ is a Frobenius of a prime in $S$.
\end{itemize}
\end{rem}

\begin{proof}
Assume that the cyclic $p$-subgroup $H \subseteq \Gal_{L/K}$ is not a decomposition group of a prime in $T$. Let $pH \subseteq H$ be the subgroup of index $p$. Then one computes directly $m_{pH}(\sigma) = p m_H(\sigma)$ for any $\sigma \in pH$. Since $H$ is not a decomposition subgroup of a prime $\fp \in T$, no generator of $H$ is a Frobenius at $T$, i.e., $P_{L/K}(\sigma) \cap T = \emptyset$ for any $\sigma \in H \sm pH$. By $p$-stability of $T$, there is a subset $T_0 \subseteq T$ and an $a > 0$, such that $pa > \delta_{L^{\prime}}(T_0) \geq a$ for all $L/L^{\prime}/K$.  Let $L_0 = L^H$ and $L_1 = L^{pH}$. Then by Lemma \ref{lm:Dichtepullback}

\begin{eqnarray*}
\delta_{L_0}(T_0) &=& \sum_{\sigma \in H} m_H(\sigma)\delta_K(P_{L/K}(\sigma) \cap T_0) \\
&=& \sum_{\sigma \in pH} m_H(\sigma)\delta_K(P_{L/K}(\sigma) \cap T_0) \\
&=& p^{-1} \sum_{\sigma \in pH} m_{pH}(\sigma)\delta_K(P_{L/K}(\sigma) \cap T_0) \\
&=& p^{-1} \delta_{L_1}(T_0).
\end{eqnarray*}

\noindent This contradicts our assumption on $T_0$.

\end{proof}

\begin{proof}[Proof of Theorem \ref{thm:keyresult}]
We can assume $L = K$.
By applying Lemma \ref{lm:shortShaseq} to $\cL/K(A)/K$ and using Lemma \ref{lm:Sha_in_coh_ast}, we are reduced to showing that if $A$ is a trivial $G$-module, then $\Sha^1(\cL/K,T;A) = 0$. Let $T_0 \subseteq T$ and $a > 0$ be such that $pa > \delta_{L^{\prime}}(T_0) \geq a$ for all $\cL/L^{\prime}/K$. Let $\Gal_{\cL/K}^T$ be the quotient of $\Gal_{\cL/K}$, corresponding to the maximal subextension of $\cL/K$, which is completely split in $T$. We have then

\[\Sha^1(\cL/K,T;A) = \ker( \Hom(\Gal_{\cL/K}, A) \rar \prod_{\fp \in T} \Hom(\cG_{\fp}, A)) = \Hom(\Gal_{\cL/K}^T, A).\]

\noindent If $0 \neq \phi \in \Hom(\Gal_{\cL/K}^T, A)$, then $M := \cL^{\ker(\phi)}/K$ is a finite extension inside $\cL/K$ with Galois group $\im(\phi) \neq 0$ and completely decomposed in $T$, and in particular in $T_0$. Thus

\[ pa > \delta_M(T_0) = [M:K]\delta_K(T_0 \cap \cs(M/K)) = \abs{\im(\phi)}\delta_K(T_0) \geq pa, \]

\noindent since $\delta_K(T_0) \geq a$. This is a contradiction, and hence we obtain
\[ \Sha^1(\cL/K,T;A) = \Hom(\Gal_{\cL/K}^T, A) = 0. \qedhere \]

\end{proof}


\subsection{Hasse principles} \label{sec:concrete_HPs_for_stablesets}

Let $K,S,T$ be a number field and two sets of primes of $K$. Various conditions on $S,T,A$ which imply the Hasse principles in cohomological dimensions 1 and 2 are considered in \cite{NSW} Chapter IX, \S1. We prove analogous results for stable sets. Before stating them, we refer the reader to \cite{NSW} 9.1.5, 9.1.7 for the definitions of the special cases.

\begin{cor}\label{cor:limHasseforstables} 
Let $K$ be a number field, $T, S$ sets of primes of $K$, $A$ a finite $\Gal_{K,S}$-module. Assume that $T$ is $p$-stable for $K_S/K$, where $p$ is the smallest prime divisor of $\abs{A}$. If $L$ is a $p$-stabilizing field for $T$ for $K_S/K$ and $\coh^1_{\ast}(L(A)/L, A) = 0$, then
\[ \Sha^1(K_S/L,T;A) = 0. \]
In particular, the following holds.
\begin{itemize}
\item[(i)]  Let $L_0$ be a $p$-stabilizing field for $T$ for $K_S/K$, which trivializes $A$. Then $\Sha^1(K_S/L,T;A) = 0$ for any finite $K_S/L/L_0$.
\item[(ii)] Assume $S \supseteq S_{\infty}$ and $n \in \bN(S)$ with the smallest prime divisor equal $p$. If $L_0$ is a $p$-stabilizing field for $T$ for $K_S/K$, then $\Sha^1(K_S/L, T; \mu_n) = 0$ for any finite $K_S/L/L_0$, such that we are not in the special case $(L,n,T)$. In the special case $(L,n,T)$ we have $\Sha^1(K_S/L, T; \mu_n) = \bZ/2\bZ$.
\end{itemize}
\end{cor}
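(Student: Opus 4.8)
The plan is to derive Corollary~\ref{cor:limHasseforstables} as a more or less formal consequence of Theorem~\ref{thm:keyresult} together with standard vanishing facts about the groups $\coh^1_{\ast}$. First I would observe that the very first assertion is literally Theorem~\ref{thm:keyresult} applied with $\cL = K_S$: the hypotheses ($T$ $p$-stable for $K_S/K$, $p$ the smallest prime divisor of $\abs{A}$, $L$ a $p$-stabilizing field) match verbatim, so $\Sha^1(K_S/L, T; A) \subseteq \coh^1_{\ast}(L(A)/L, A)$, and the extra assumption $\coh^1_{\ast}(L(A)/L, A) = 0$ forces $\Sha^1(K_S/L,T;A) = 0$. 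Here one should note that $L(A)$ denotes the trivializing extension of $A$ \emph{inside $K_S$}, which is fine since $A$ is a $\Gal_{K,S}$-module.

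For part~(i), the point is that if $L_0$ already trivializes $A$, then for every finite $K_S/L/L_0$ the trivializing extension $L(A)$ equals $L$ itself, so $\coh^1_{\ast}(L(A)/L, A) = \coh^1_{\ast}(L/L, A) = \coh^1(1, A) = 0$ trivially (the group is computed over the trivial group). By Lemma~\ref{prop:firstpropsofstabletrivs}(ii), $L$ is again a $p$-stabilizing field for $T$ for $K_S/K$ since $K_S/L/L_0$, so the first assertion applies with $L$ in place of $L_0$ and gives $\Sha^1(K_S/L, T; A) = 0$. This is the cleanest of the three parts.

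Part~(ii) is the one requiring genuine input beyond formal manipulation: I would specialize $A = \mu_n$ and invoke the classical computation of $\coh^1_{\ast}(L(\mu_n)/L, \mu_n)$, which is exactly what governs the Grunwald--Wang phenomenon. The standard fact (cf.\ \cite{NSW} 9.1.5--9.1.7, i.e.\ the Grunwald--Wang special case) is that $\coh^1_{\ast}(L(\mu_n)/L, \mu_n)$ is $\bZ/2\bZ$ when one is in the special case $(L,n,T)$ and $0$ otherwise; combining this with the first assertion (valid because $S \supseteq S_\infty$ guarantees $\mu_n$ is an honest $\Gal_{K,S}$-module for $n \in \bN(S)$, and $p$ is the smallest prime divisor of $n = \abs{\mu_n}$) yields $\Sha^1(K_S/L, T; \mu_n) = 0$ outside the special case. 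In the special case one still has the inclusion $\Sha^1(K_S/L, T; \mu_n) \subseteq \coh^1_{\ast}(L(\mu_n)/L, \mu_n) = \bZ/2\bZ$; to get equality one must exhibit the nontrivial class, i.e.\ check that the quadratic Kummer class which is the Grunwald--Wang counterexample actually lies in $\Sha^1(K_S/L,T;\mu_n)$ --- that it splits locally at every prime of $T$. This is exactly the content of the corresponding statement in \cite{NSW} Chapter~IX for sets of density one, and the argument there carries over once one knows (via the lemma following Theorem~\ref{thm:keyresult}) that every cyclic $p$-subgroup of the relevant Galois group is a decomposition group at $T$; I would cite \cite{NSW} for the identification and only indicate this reduction.

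The main obstacle is thus not the formal deduction but pinning down precisely the ``special case $(L,n,T)$'' terminology and verifying the lower bound $\Sha^1 = \bZ/2\bZ$ in it: one has to be careful that the notion of special case from \cite{NSW} 9.1.5/9.1.7 transports correctly to the stable-set setting, and that the explicit quadratic class is locally trivial at \emph{all} of $T$ (not merely at a density-one subset), which is where $p$-stability of $T$ — via the decomposition-group lemma proved just above Theorem~\ref{thm:keyresult} — does the real work. Everything else is bookkeeping with $\coh^1_\ast$ and invocations of Lemma~\ref{prop:firstpropsofstabletrivs}.
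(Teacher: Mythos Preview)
Your treatment of the first assertion and of part~(i) matches the paper's proof exactly.

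For part~(ii), however, your dichotomy is miscalibrated. The group $\coh^1_{\ast}(L(\mu_n)/L,\mu_n)$ is a purely group-theoretic invariant of the $\Gal(L(\mu_n)/L)$-module $\mu_n$; it cannot see $T$. The correct statement is that $\coh^1(L(\mu_n)/L,\mu_n)$ (and hence $\coh^1_\ast$) vanishes outside the special case $(L,n)$ and equals $\bZ/2\bZ$ in the special case $(L,n)$ --- \emph{without} reference to $T$ (this is \cite{NSW}~9.1.6). So when one is in the special case $(L,n)$ but \emph{not} in the special case $(L,n,T)$, your argument claims $\coh^1_\ast=0$ and concludes $\Sha^1=0$ from the first assertion, but in fact $\coh^1_\ast=\bZ/2\bZ$ there, and the first assertion gives only $\Sha^1\subseteq\bZ/2\bZ$. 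You therefore have no argument forcing $\Sha^1=0$ in this intermediate case.

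The paper closes this gap differently: once in the special case $(L,2^r)$, it applies Theorem~\ref{thm:keyresult} at the trivializing field $L(\mu_{2^r})$ to get $\Sha^1(K_S/L(\mu_{2^r}),T;\mu_{2^r})=0$, and then Lemma~\ref{lm:shortShaseq} yields the identification $\Sha^1(K_S/L,T;\mu_{2^r})=\Sha^1(L(\mu_{2^r})/L,T;\mu_{2^r})$. The right-hand side is now a $\Sha$-group for a \emph{finite} extension, and the computation that it equals $\bZ/2\bZ$ or $0$ according to whether we are in $(L,n,T)$ or not is literally the argument of \cite{NSW}~9.1.9(ii). This is where $T$ finally enters. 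Note also that your invocation of the cyclic-decomposition-group lemma for the \emph{lower} bound is a red herring: that lemma is an upper-bound device (it feeds into $\Sha\subseteq\coh^1_\ast$), whereas showing the Grunwald--Wang class lies in $\Sha^1$ is a direct local check at each $\fp\in T$, exactly as in \cite{NSW}, and requires no stability input.
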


The same also holds, if one replaces $\Gal_{K,S}$ by the quotient $\Gal_{K,S}(\fc)$, where $\fc$ is a full class of finite groups in the sense of \cite{NSW} 3.5.2.

\begin{proof} The first statement follows directly from Theorem \ref{thm:keyresult}. (i): since $L_0$ is a $p$-stabilizing field trivializing $A$, any finite subextension $L$ of $K_S/L_0$ has the same property. Hence (i) follows. To prove (ii), we can assume $n = p^r$. If we are not in the special case $(L, p^r)$, Proposition \cite{NSW} 9.1.6 implies $\coh^1(L(\mu_{p^r})/L, \mu_{p^r}) = 0$, i.e., we are done by Theorem \ref{thm:keyresult}. Assume we are in the special case $(L, p^r)$. In particular, $p = 2$. Then $\coh^1(L(\mu_{2^r})/L, \mu_{2^r}) = \bZ/2\bZ$. Since
\[ \Sha^1(K_S/L(\mu_{2^r}), T; \mu_{2^r}) = 0 \]

\noindent by Theorem \ref{thm:keyresult}, we see from Lemma \ref{lm:shortShaseq}
\[ \Sha^1(K_S/L, T; \mu_{2^r}) = \Sha^1(L(\mu_{2^r})/L, T; \mu_{2^r}). \]

\noindent Now the same argument as in the proof of \cite{NSW} 9.1.9(ii) finishes the proof. \qedhere

\end{proof}

Now we turn to $\Sha^2$. For a $\Gal_{K,S}$-module $A$, such that $\abs{A} \in \bN(S)$, we denote by
\[A^{\prime} := \Hom(A, \caO_{K_S,S}^{\ast})\]
\noindent the dual of $A$. As in \cite{NSW} 9.1.10, we obtain the following corollary.

\begin{cor}\label{cor:Sha2is0forstable}
Let $K$ be a number field, $S \supseteq S_{\infty}$ a set of primes of $K$, $A$ a finite $\Gal_{K,S}$-module with $\abs{A} \in \bN(S)$. Assume that $S$ is $p$-stable (i.e., $p$-stable for $K_S/K$), where $p$ is the smallest prime divisor of $\abs{A}$. Let $L$ be a $p$-stabilizing field for $S$ for $K_S/K$, such that
$\coh^1_{\ast}(L(A^{\prime})/L, A^{\prime}) = 0$. Then

\[ \Sha^2(K_S/L; A) = 0. \]
\noindent In particular:

\begin{itemize}
\item[(i)]  Let $L_0$ be a $p$-stabilizing field for $S$ for $K_S/K$, which trivializes $A^{\prime}$. Then $\Sha^2(K_S/L;A) = 0$ for any finite $K_S/L/L_0$.
\item[(ii)] Let $n \in \bN(S)$ with smallest prime divisor $p$. If $L$ is a $p$-stabilizing field for $S$ and we are not in the special case $(L, n, S)$, then $\Sha^2(K_S/L, \bZ/n\bZ) = 0$. In the special case, we have $\Sha^2(K_S/L; \bZ/n\bZ) = \bZ/2\bZ$.
\end{itemize}
\end{cor}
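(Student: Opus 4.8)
The plan is to reduce the statement to the Hasse principle for $\Sha^1$ already established in Theorem~\ref{thm:keyresult} (and Corollary~\ref{cor:limHasseforstables}), by passing to the dual module via Poitou--Tate duality, exactly in the pattern of \cite{NSW} 9.1.10. Concretely, since $S \supseteq S_\infty$ and $\abs{A} \in \bN(S)$, Poitou--Tate duality (\cite{NSW} Chapter~VIII, \S6) supplies an isomorphism
\[ \Sha^2(K_S/L; A) \;\cong\; \Sha^1(K_S/L; A^{\prime})^{\vee}, \]
so it suffices to show $\Sha^1(K_S/L; A^{\prime}) = 0$.

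First I would check that the relevant hypotheses descend to the base field $L$. As $L/K$ is a finite subextension of $K_S/K$, we have $K_S = L_{S_L}$ with $S_L$ the pull-back of $S$ to $L$, and $S_L \supseteq S_\infty(L)$ while $\abs{A} \in \bN(S_L)$; in particular $\mu_{\abs{A}} \subseteq K_S$, so $A^{\prime} = \Hom(A, \caO_{K_S,S}^{\ast})$ is a finite $\Gal_{K_S/K}$-module with $\abs{A^{\prime}} = \abs{A}$, and $p$ is also the smallest prime divisor of $\abs{A^{\prime}}$. Now I would apply Theorem~\ref{thm:keyresult} with $\cL = K_S$, $T = S$ and coefficient module $A^{\prime}$: since $S$ is $p$-stable for $K_S/K$ and $L$ is a $p$-stabilizing field for $S$ for $K_S/K$, the theorem gives
\[ \Sha^1(K_S/L; A^{\prime}) \;\subseteq\; \coh^1_{\ast}(L(A^{\prime})/L, A^{\prime}) = 0, \]
the last equality being the standing hypothesis. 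Combined with the displayed isomorphism this yields $\Sha^2(K_S/L; A) = 0$, the main assertion.

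For the itemized cases: in (i), if $L_0$ trivializes $A^{\prime}$, then $L(A^{\prime}) = L$ for every finite $K_S/L/L_0$, hence $\coh^1_{\ast}(L(A^{\prime})/L, A^{\prime}) = 0$ automatically, and $L$ is still a $p$-stabilizing field by Lemma~\ref{prop:firstpropsofstabletrivs}(ii); apply the main assertion. In (ii), take $A = \bZ/n\bZ$, so $A^{\prime} = \mu_n$; decomposing $\bZ/n\bZ$ into its $q$-primary parts reduces to $n = q^r$ with $q \geq p$ (using Lemma~\ref{prop:firstpropsofstabletrivs}(i) so that $L$ remains a $q$-stabilizing field). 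Outside the special case, \cite{NSW} 9.1.6 gives $\coh^1(L(\mu_{q^r})/L, \mu_{q^r}) = 0$, hence $\coh^1_{\ast} = 0$ and $\Sha^2(K_S/L; \bZ/q^r\bZ) = 0$; in the special case (necessarily $q = p = 2$) one combines the Poitou--Tate isomorphism with Corollary~\ref{cor:limHasseforstables}(ii), which gives $\Sha^1(K_S/L; \mu_{2^r}) = \bZ/2\bZ$, to conclude $\Sha^2(K_S/L; \bZ/2^r\bZ) = \bZ/2\bZ$.

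Since essentially all of the substance is carried by Theorem~\ref{thm:keyresult} and the Poitou--Tate machinery, no serious obstacle remains; the only point requiring a moment's attention is the legitimacy of Poitou--Tate duality for a possibly infinite, non-density-one set $S$, but this is harmless because the nine-term exact sequence of \cite{NSW} Chapter~VIII holds under exactly the standing assumptions $S \supseteq S_\infty$ and $\abs{A} \in \bN(S)$, which are preserved under base change to $L$, and finiteness of $\Sha^1(K_S/L; A^{\prime})$ is then automatic from its vanishing.
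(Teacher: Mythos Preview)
Your proof is correct and follows essentially the same route as the paper: apply Poitou--Tate duality (\cite{NSW} 8.6.7) to reduce $\Sha^2(K_S/L;A)$ to $\Sha^1(K_S/L;A^{\prime})^{\vee}$, then invoke Theorem~\ref{thm:keyresult} with $T=S$ and module $A^{\prime}$, and deduce (i) and (ii) from Corollary~\ref{cor:limHasseforstables}. Your write-up is slightly more detailed (checking $\abs{A^{\prime}}=\abs{A}$, the primary decomposition in (ii), and the persistence of the $q$-stabilizing property via Lemma~\ref{prop:firstpropsofstabletrivs}(i)), but the argument is the same.
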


\begin{rem}\label{rem:bloedremark}
The condition $\abs{A} \in \bN(S)$ is not necessary if $A$ is trivial: we postpone the proof of this until all necessary ingredients (in particular Grunwald-Wang theorem,  Riemann's existence theorem and $\cd_p \Gal_{K,S} = 2$) are proven. Cf. Proposition \ref{prop:Sha2vanishingwoinverting}.
\end{rem}

\begin{proof}[Proof of Corollary \ref{cor:Sha2is0forstable}]
By Poitou-Tate duality \cite{NSW} 8.6.7 (this is the reason, why we need $S \supseteq S_{\infty}$ and $\abs{A} \in \bN(S)$) we have:

\[ \Sha^2(K_S/L, A) \cong \Sha^1(K_S/L,A^{\prime})^{\vee}, \]
where $X^{\vee} := \Hom(X, \bR/\bZ)$ is the Pontrjagin dual. An application of Theorem \ref{thm:keyresult} to $K_S/K$, the sets $S = T$ and the module $A^{\prime}$ gives the desired result. (i) and (ii) follow from Corollary \ref{cor:limHasseforstables}.
\end{proof}


\subsection{Finiteness of the Shafarevich-Tate group with divisible coefficients} \label{sec:Finitness_of_Sha_div_coeff}

As a version of Corollary \ref{cor:limHasseforstables}(i), we have the following proposition.

\begin{prop}\label{prop:boundofShafbystab}
Let $K$ be a number field, $\cL/K$ a Galois extension, $p^m$ some rational prime power ($m \geq 1$). Let $T$ be a set of primes of $K$, which is $p^m$-stable for $\cL/K$, with $p^m$-stabilizing field $L_0$. Then
\[ \abs{\Sha^1(\cL/L, T; \bZ/p^r\bZ)}  < p^m \]

\noindent for  any $r > 0$ and any finite subextension $\cL/L/L_0$.
\end{prop}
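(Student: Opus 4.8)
The plan is to bootstrap from Theorem~\ref{thm:keyresult}, which already tells us that $\Sha^1(\cL/L,T;A)$ sits inside $\coh^1_{\ast}(L(A)/L,A)$ for a $p$-stable $T$ (and a $p^m$-stable set is in particular $p$-stable). So the only thing left to prove is a purely group-theoretic bound: if $G$ is a finite group which is the Galois group of the trivializing extension $L(\bZ/p^r\bZ)/L$ — but since the module $\bZ/p^r\bZ$ carries the \emph{trivial} action, the trivializing extension is just $L$ itself and $G$ is trivial, so $\coh^1_{\ast}(L/L,\bZ/p^r\bZ)=0$. That makes the bound $< p^m$ trivially true with room to spare. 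Hence the only substance is to redo the argument with a \emph{nontrivial} bound that actually uses $m$, i.e.\ to reexamine the proof of Theorem~\ref{thm:keyresult} in the trivial-module case and extract the sharper estimate $\abs{\Sha^1}<p^m$ rather than $=0$ when one only has $p^m$-stability instead of $p$-stability.

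So the real plan: follow verbatim the last paragraph of the proof of Theorem~\ref{thm:keyresult}, reducing to $A=\bZ/p^r\bZ$ trivial. Pick $T_0\subseteq T$ and $a>0$ with $p^m a>\delta_{L'}(T_0)\geq a$ for all finite $\cL/L'/L_0$. Identify $\Sha^1(\cL/L,T;\bZ/p^r\bZ)=\Hom(\Gal_{\cL/L}^T,\bZ/p^r\bZ)$, where $\Gal_{\cL/L}^T$ is the maximal quotient of $\Gal_{\cL/L}$ corresponding to the subextension of $\cL/L$ completely split in $T$. The point is that this $\Hom$-group injects into $\Hom$ of the $p$-part, which is a finite abelian $p$-group, and I claim its order is $<p^m$. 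Indeed, suppose $\Sha^1(\cL/L,T;\bZ/p^r\bZ)$ had order $\geq p^m$. Then, since it is a subgroup of a $\bZ/p^r\bZ$-vector-space-like group, there is a quotient $\Gal_{\cL/L}^T\twoheadrightarrow A'$ with $\abs{A'}\geq p^m$ (take $A'$ to be the image of $\Gal_{\cL/L}^T$ in $\bZ/p^r\bZ$ under enough characters, or more simply: the dual of $\Sha^1$ is a quotient of $\Gal_{\cL/L}^{T}$ killed by $p^r$ of order $\geq p^m$). Let $M:=\cL^{\ker}$ be the corresponding finite subextension of $\cL/L$, Galois with group $A'$ of order $\geq p^m$ and completely decomposed in $T$, hence in $T_0$. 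Then
\[
p^m a>\delta_M(T_0)=[M:L]\,\delta_L(T_0\cap\cs(M/L))=\abs{A'}\,\delta_L(T_0)\geq p^m a,
\]
a contradiction. Therefore $\abs{\Sha^1(\cL/L,T;\bZ/p^r\bZ)}<p^m$.

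The one genuinely fiddly point — and the place I would be most careful — is the passage from "$\Sha^1$ has order $\geq p^m$" to "there is a quotient of $\Gal_{\cL/L}^T$ of order $\geq p^m$ annihilated by $p^r$ which is completely split in $T$." Since $\Sha^1=\Hom(\Gal_{\cL/L}^T,\bZ/p^r\bZ)=\Hom((\Gal_{\cL/L}^T)^{\ab}\otimes\bZ/p^r\bZ,\bZ/p^r\bZ)$ is Pontryagin dual to $B:=(\Gal_{\cL/L}^T)^{\ab}/p^r$, and $\abs{B}=\abs{\Sha^1}\geq p^m$, the extension $M$ cut out by $B$ (a finite subextension of $\cL/L$, abelian over $L$, completely split in $T$ since it is a subextension of the maximal completely-$T$-split subextension) does the job: $[M:L]=\abs{B}\geq p^m$, and $M/L$ is completely split in $T$, hence in $T_0$, and the density computation above applies. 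I would also double-check that $M/L$ being completely split in $T$ really is inherited by $T_0\subseteq T$, and that the formula $\delta_M(T_0)=[M:L]\delta_L(T_0\cap\cs(M/L))$ (Lemma~\ref{lm:Dichtepullback} for $M/L$ Galois) is being applied with the $p^m$-stability inequality valid at the level $M$ since $L_0\subseteq L\subseteq M\subseteq\cL$. Once that is cleanly in place the proof is complete; in particular, for $m=1$ this recovers $\Sha^1=0$, consistent with Theorem~\ref{thm:keyresult}.
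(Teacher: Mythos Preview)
Your main argument (from ``So the real plan'' onward) is correct and essentially identical to the paper's: assume $\abs{\Sha^1}\geq p^m$, dualize to get an abelian quotient of $\Gal_{\cL/L}$ of order $\geq p^m$ completely split in $T$ (hence in $T_0$), and derive a density contradiction via Lemma~\ref{lm:Dichtepullback}. The paper routes this through $\Sha^1(\cL/L,T_0;\bZ/p^r\bZ)\supseteq\Sha^1(\cL/L,T;\bZ/p^r\bZ)$ first and then dualizes, whereas you dualize first and then use $T_0\subseteq T$; this is immaterial.

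Two small points. First, your opening paragraph contains an actual error: a $p^m$-stable set is \emph{not} in particular $p$-stable --- the implication goes the other way (smaller $\lambda$ is the stronger condition, cf.\ Lemma~\ref{prop:firstpropsofstabletrivs}(i)). You abandon this line anyway, so it does no damage, but the sentence should be deleted. Second, in the ``fiddly'' paragraph you write $[M:L]=\abs{B}$ with $B=(\Gal_{\cL/L}^T)^{\ab}/p^r$; a priori $B$ need not be finite. The paper handles this by passing to a finite subextension $M_1\subseteq M$ of degree $\geq p^m$ over $L$ (which exists since $B$, being profinite with $\abs{B}\geq p^m$, has a finite quotient of order $\geq p^m$). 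With that adjustment your proof is complete.
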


\begin{proof} Let $T_0 \subseteq T$ and $a > 0$ be such that $a \leq \delta_L(T_0) < p^m a$ for all finite $\cL/L/L_0$. Let $\cL/L/L_0$ be a finite extension. Assume that $\abs{\Sha^1(\cL/L,T; \bZ/p^r\bZ)} \geq p^m$. Then also
\[ \abs{\Sha^1(\cL/L, T_0; \bZ/p^r\bZ)} \geq p^m \]

\noindent and we have:

\[ \Sha^1(\cL/L, T_0; \bZ/p^r\bZ) \cong \Hom(\Gal_{\cL/L}^{T_0}(p), \bZ/p^r\bZ) = (\Gal_{\cL/L}^{T_0}(p)^{\ab}/p^r)^{\vee}. \]

\noindent Thus $\abs{\Sha^1(\cL/L, T_0; \bZ/p^r\bZ)}  \geq p^m$ implies $\abs{\Gal_{\cL/L}^{T_0}(p)^{\ab}/p^r} \geq p^m$, and if $M/L$ is the subextension of $\cL/L$, corresponding to $\Gal_{\cL/L}^{T_0}(p)^{\ab}/p^r$, then it has a finite subextension $M_1$ of degree $\geq p^m$, which is completely split in $T_0$, hence $\delta_{M_1}(T_0) \geq p^m \delta_L(T_0)$, which is a contradiction to $p^m$-stability of $T_0$.
\end{proof}

\begin{cor}
Let $K$ be a number field, $\cL/K$ a Galois extension, and $T$ a set of primes of $K$ stable for $\cL/K$. Then $\Sha^1(\cL/K, T; \bQ_p/\bZ_p)$ is finite for any $p$. Moreover, $\Sha^1(\cL/K, T; \bQ/\bZ)$ is finite.
\end{cor}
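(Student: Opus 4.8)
The plan is to bootstrap everything from the finite‑coefficient bound in Proposition~\ref{prop:boundofShafbystab}. Since $T$ is stable for $\cL/K$, Proposition~\ref{prop:stableequalbounded} says that $T$ is $\lambda$-stable for $\cL/K$ with $\lambda$-stabilizing field $K$ for some $\lambda > 1$. Fix a prime $p$ and pick an integer $m = m(p) \geq 1$ with $p^m \geq \lambda$; by Lemma~\ref{prop:firstpropsofstabletrivs}(i), $T$ is then $p^m$-stable for $\cL/K$ with $p^m$-stabilizing field $K$, so Proposition~\ref{prop:boundofShafbystab} (applied with $L_0 = L = K$) gives
\[ \abs{\Sha^1(\cL/K, T; \bZ/p^r\bZ)} < p^m \qquad \text{for all } r \geq 1. \]
The key bonus observation is that for every $p > \lambda$ one may take $m(p) = 1$, so that $\abs{\Sha^1(\cL/K, T; \bZ/p^r\bZ)} < p$, i.e.\ this group is trivial for all $r$.

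Next I would identify $\Sha^1(\cL/K, T; \bQ_p/\bZ_p)$ (trivial coefficients) with the direct limit $\varinjlim_r \Sha^1(\cL/K, T; \bZ/p^r\bZ)$ along the inclusions $\bZ/p^r\bZ \inj \bZ/p^{r+1}\bZ \inj \bQ_p/\bZ_p$. Since the coefficients are trivial, $\coh^0(\cL/K, \bQ_p/\bZ_p) = \bQ_p/\bZ_p$ is divisible, so the long exact sequence attached to $0 \to \bZ/p^r\bZ \to \bQ_p/\bZ_p \xrightarrow{p^r} \bQ_p/\bZ_p \to 0$ shows that $\coh^1(\cL/K, \bZ/p^r\bZ) \to \coh^1(\cL/K, \bQ_p/\bZ_p)$ is injective, and likewise $\coh^1(\cG_{\fp}, \bZ/p^r\bZ) \to \coh^1(\cG_{\fp}, \bQ_p/\bZ_p)$ is injective for every $\fp \in T$. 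Combined with $\coh^1(\cL/K, \bQ_p/\bZ_p) = \varinjlim_r \coh^1(\cL/K, \bZ/p^r\bZ)$ (continuous cohomology commutes with filtered direct limits of discrete coefficient modules, cf.\ \cite{NSW}), a diagram chase on the defining kernels yields two facts: each map $\Sha^1(\cL/K, T; \bZ/p^r\bZ) \to \Sha^1(\cL/K, T; \bQ_p/\bZ_p)$ is injective, and every class $\xi$ of the target already comes from some finite level — lifting $\xi$ to $\xi_r \in \coh^1(\cL/K, \bZ/p^r\bZ)$, the vanishing $\res_{\fp}(\xi) = 0$ forces $\res_{\fp}(\xi_r) = 0$ by local injectivity, so $\xi_r \in \Sha^1(\cL/K, T; \bZ/p^r\bZ)$. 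Hence $\Sha^1(\cL/K, T; \bQ_p/\bZ_p)$ is an increasing union of finite subgroups each of order $< p^m$, so it has order $< p^m$; in particular it is finite, and it vanishes as soon as $p > \lambda$.

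For $\bQ/\bZ$ I would use the $p$-primary decomposition $\bQ/\bZ = \bigoplus_p \bQ_p/\bZ_p$. Each inclusion $\bQ_p/\bZ_p \inj \bQ/\bZ$ is split, both globally and over each $\cG_{\fp}$, so $\coh^1(\cL/K, \bQ_p/\bZ_p)$ is a direct summand of $\coh^1(\cL/K, \bQ/\bZ)$ and similarly locally; therefore $\Sha^1(\cL/K, T; \bQ/\bZ) = \bigoplus_p \Sha^1(\cL/K, T; \bQ_p/\bZ_p)$. By the previous paragraph the summands with $p > \lambda$ all vanish, so this is a finite direct sum of finite groups, hence finite.

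I expect the only genuinely delicate point to be the identification of the divisible‑coefficient Shafarevich group with the direct limit of its finite‑coefficient versions: this relies on the coefficients being trivial, so that $\coh^0$ is divisible and the comparison maps on $\coh^1$ are injective both globally and at every $\fp \in T$. Everything else is bookkeeping around Proposition~\ref{prop:boundofShafbystab}, together with the observation that the exponent $m(p)$ — and with it the whole group — can be taken trivial for all but finitely many $p$.
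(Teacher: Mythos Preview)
Your argument is correct and follows essentially the same route as the paper's proof: invoke Proposition~\ref{prop:stableequalbounded} to get $p^m$-stability with stabilizing field $K$, apply Proposition~\ref{prop:boundofShafbystab} for the uniform bound on the finite levels, and use the $p$-primary decomposition together with $m(p)=1$ for $p>\lambda$ to handle $\bQ/\bZ$. The only difference is that the paper simply asserts ``it is enough to show that $\abs{\Sha^1(\cL/K,T;\bZ/p^r\bZ)}$ is uniformly bounded'', whereas you spell out the identification $\Sha^1(\cL/K,T;\bQ_p/\bZ_p)=\varinjlim_r \Sha^1(\cL/K,T;\bZ/p^r\bZ)$ via the divisibility of $\coh^0$ for trivial coefficients --- a detail worth recording, and your justification of it is sound.
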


\begin{proof} For the first statement it is enough to show that $\abs{\Sha^1(\cL/K, T; \bZ/p^r\bZ)}$ is uniformly bounded for $r > 0$. By Proposition \ref{prop:stableequalbounded}, there is some $m \geq  1$, such that $K$ is a $p^m$-stabilizing field for $T$ for $\cL/K$. Then Proposition \ref{prop:boundofShafbystab} implies $\abs{\Sha^1(\cL/K, T; \bZ/p^r\bZ)} < p^m$. For the last statement, we decompose: $\Sha^1(\cL/K, T; \bQ/\bZ) = \bigoplus_p \Sha^1(\cL/K, T; \bQ_p/\bZ_p)$. The proven part shows that each of the summands is finite. Moreover, almost all are zero: there is some $\lambda > 1$, such that $K$ is $\lambda$-stabilizing field for $T$ for $\cL/K$. Thus for any $p \geq \lambda$, the group $\Sha^1(\cL/K, T; \bQ_p/\bZ_p)$ vanishes.
\end{proof}


\subsection{Uniform bound}\label{sec:unif_bounds_on_Sha1_field_var}

For later needs (cf. Section \ref{sec:GWT_and_dagger}) we prove the following uniform bounds. The results of this section were not part of \cite{Iv}.

\begin{prop} Let $\cM/\cL/K$ be Galois extensions, $A$ a finite $\Gal_{\cM/K}$-module and let $S$ be stable for $\cL(A)/K$. Then there is some $C > 0$ such that
\[ \abs{\Sha^1(\cM/L, S; A)} < C \]
\noindent for all finite subextensions $\cL/L/K$.
\end{prop}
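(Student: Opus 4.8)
The plan is to reduce the general statement to the situation where $A$ is a trivial module over the Galois group of a suitable finite extension, and then to bound the relevant Shafarevich group by counting degrees of split subextensions, exactly as in the proof of Theorem \ref{thm:keyresult} and Proposition \ref{prop:boundofShafbystab}. First I would invoke Proposition \ref{prop:stableequalbounded}: since $S$ is stable for $\cL(A)/K$, there is some $\lambda > 1$ (equivalently, some prime power bound) such that $K$ is a $\lambda$-stabilizing field for $S$ for $\cL(A)/K$, so that $\delta_{L'}(S) > \epsilon$ for all finite $\cL(A)/L'/K$ with a fixed $\epsilon > 0$. The key point is that this single $\epsilon$ works simultaneously for every intermediate field $L$ with $\cL/L/K$, because the tower $\cL(A)/L$ sits inside $\cL(A)/K$; this uniformity is what will produce a constant $C$ independent of $L$.

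Next I would set up the dévissage. Fix $L$ with $\cL/L/K$. The trivializing extension $L(A)$ is contained in $L\cdot K(A) \subseteq \cL(A)$, and $[L(A):L] \le [K(A):K] =: d$ is bounded independently of $L$. Apply Lemma \ref{lm:shortShaseq} to the tower $\cM/L(A)/L$ (the hypothesis $A^{\Gal_{\cM/L(A)}} = A^{D_{\fp}}$ is automatic there since $A$ is a trivial $\Gal_{\cM/L(A)}$-module, hence $A^{\Gal_{\cM/L(A)}} = A$); this gives an exact sequence
\[ 0 \rar \Sha^1(L(A)/L, S; A) \rar \Sha^1(\cM/L, S; A) \rar \Sha^1(\cM/L(A), S; A). \]
The left-hand term is bounded by $\abs{\coh^1(L(A)/L, A)} \le \abs{\coh^1(\Gal_{L(A)/L}, A)}$, and since $\abs{\Gal_{L(A)/L}} \le d$ and $\abs{A}$ is fixed, this is bounded by a constant depending only on $d$ and $A$. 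So it remains to bound the right-hand term $\Sha^1(\cM/L(A), S; A)$ uniformly; here $A$ is a \emph{trivial} module over $\Gal_{\cM/L(A)}$, so it reduces to bounding $\Hom(\Gal_{\cM/L(A)}^{S}, A)$, where $\Gal_{\cM/L(A)}^{S}$ is the Galois group of the maximal subextension of $\cM/L(A)$ completely split in $S$.

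For that final bound I would argue as in Theorem \ref{thm:keyresult}: writing $A$ (trivial) as a direct sum of cyclic groups, it suffices to bound the $p$-rank contributions, and $\Hom(\Gal_{\cM/L(A)}^S, A)$ corresponds to finite subextensions of $\cM/L(A)$, completely split in $S$ (hence in a fixed $S_0 \subseteq S$ with $\delta_{L'}(S_0) \ge a$ on the tower). If $N/L(A)$ is such a subextension, then $\delta_N(S_0) = [N:L(A)]\,\delta_{L(A)}(S_0 \cap \cs(N/L(A))) = [N:L(A)]\,\delta_{L(A)}(S_0)$, and since $\delta_N(S_0) \le 1$ while $\delta_{L(A)}(S_0) \ge a$, we get $[N:L(A)] \le 1/a$. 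Thus $\abs{\Gal_{\cM/L(A)}^S} \le 1/a$ (the abelian-by-split part relevant to $\Hom(-,A)$ is finite of bounded order), whence $\abs{\Hom(\Gal_{\cM/L(A)}^S, A)} \le \abs{A}^{\log_2(1/a)}$ or similar — in any case bounded by a constant depending only on $a$ and $\abs{A}$. Combining the two bounds via the exact sequence produces the desired uniform $C$. The main obstacle is purely bookkeeping: making sure the bounds on $[L(A):L]$ and on $\delta_{L'}(S_0)$ are genuinely independent of $L$, which they are precisely because $K$ can be taken as the stabilizing field (Proposition \ref{prop:stableequalbounded}) and $K(A)/K$ is a fixed finite extension; no new idea beyond the proof of Theorem \ref{thm:keyresult} is needed.
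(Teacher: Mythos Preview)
Your proposal is correct and follows essentially the same route as the paper: apply Lemma~\ref{lm:shortShaseq} to the tower $\cM/L(A)/L$, bound the left term via $\coh^1(L(A)/L,A)$ using that $\Gal_{L(A)/L}$ embeds in the fixed finite group $\Gal_{K(A)/K}$, and bound the right term by the split-degree argument from Proposition~\ref{prop:boundofShafbystab} together with the uniform lower density bound coming from Proposition~\ref{prop:stableequalbounded}. The only cosmetic difference is that the paper phrases the right-hand bound as a contradiction ($\delta_M(S) > 1$) rather than as an explicit inequality $[N:L(A)] \le 1/a$, but this is the same computation.
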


\begin{proof}
For each $\cL/L/K$, Lemma \ref{lm:shortShaseq} applied to $\cM/L(A)/L$ gives an exact sequence
\begin{equation}\label{eq:shaseq_ochujet} 0 \rar \Sha^1(L(A)/L, S; A) \rar \Sha^1(\cM/L, S; A) \rar \Sha^1(\cM/L(A), S_{L(A)}; A). \end{equation}
Now $\Sha^1(L(A)/L, S; A) \subseteq \coh^1(L(A)/L, A)$ and $\Gal_{L(A)/L}$ is a subgroup of the finite group $\Gal_{K(A)/K}$, thus for all $\cL/L/K$, we have
\[ \abs{\Sha^1(L(A)/L, S; A)} < m := 1 + \max_{H \subseteq \Gal_{K(A)/K}} \coh^1(H,A). \]

As $S$ is stable for $\cL(A)/K$, by Proposition \ref{prop:stableequalbounded} there is some $\epsilon > 0$, such that $\delta_N(S) > \epsilon$ for all $\cL(A)/N/K(A)$. Suppose that $\abs{\Sha^1(\cM/L(A), S, A)}  \geq \epsilon^{-1}$ for some $\cL/L/K$. Then, exactly as in the proof of Proposition \ref{prop:boundofShafbystab}, there is an extension $M/L(A)$ of degree $\geq \epsilon^{-1}$, which is completely split in $S$. We obtain:

\[ \delta_M(S) = [M:L(A)]\delta_{L(A)}(S) > \epsilon^{-1}\epsilon = 1, \]

\noindent which is a contradiction. Taking into account equation \eqref{eq:shaseq_ochujet}, we obtain the statement of the proposition with respect to $C := m\epsilon^{-1}$.
\end{proof}

\begin{cor}\label{prop:uniformbound_forSha1_constcoeff}
Let $K$ be a number field, $S$, $T$ sets of primes of $K$ and $n$ a natural number.
\begin{itemize}
\item[(i)]  Assume that $K_S/\cL/K$ is a subextension such that $S$ is stable for $\cL/K$ and that $T$ has density $0$. Then there is some real $C > 0$, such that for any $\cL/L/K$ one has:
\[ \abs{\Sha^1(K_{S \cup T}/L, S \sm T, \bZ/n\bZ)} < C. \]
\item[(ii)] Assume that $T \supseteq (S_{\infty} \sm S)$ has density $0$ and $n \in \caO_{K,S \cup T}^{\ast}$. Let $K_S/\cL/K$ be a subextension such that $S$ is stable for $\cL(\mu_n)/K$. There is some real $C > 0$ such that for any $\cL/L/K$ one has:
\[ \abs{\Sha^1(K_{S \cup T}/L, S \sm T, \mu_n)} < C. \]
\end{itemize}
\end{cor}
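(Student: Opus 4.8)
The plan is to deduce both statements directly from the Proposition proved immediately above, the only preliminary observation being that stability is insensitive to density-zero perturbations of the set of primes. Since $\delta_K(T) = 0$ we have $S \backsimeq S\sm T$: indeed $S\sm(S\sm T) = S\cap T$ has density $0$, so $S \subsetsim S\sm T$, while $S\sm T \subsetsim S$ is trivial. Hence, by Lemma~\ref{prop:firstpropsofstabletrivs}(iii) (applied with $S'=S\sm T$), $S\sm T$ is stable for an extension $\cE/K$ whenever $S$ is, with the same stabilizing fields. In both parts I would then apply the preceding Proposition with $\cM := K_{S\cup T}$ (which is Galois over $K$ and contains $K_S \supseteq \cL$), with stable set $S\sm T$, and with coefficient module $A := \bZ/n\bZ$ in (i) and $A := \mu_n$ in (ii); in each case its conclusion is exactly the asserted uniform bound.

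For (i): equip $A := \bZ/n\bZ$ with the trivial $\Gal_{K_{S\cup T}/K}$-action, so that its trivializing extension is $K$ and $\cL(A) = \cL$. By hypothesis $S$ is stable for $\cL/K$, hence $S\sm T$ is stable for $\cL(A)/K$, and the Proposition (with $\cM = K_{S\cup T}$, the subextension $\cL/K$, the module $A$, and stable set $S\sm T$) produces a constant $C>0$ with $\abs{\Sha^1(K_{S\cup T}/L, S\sm T; \bZ/n\bZ)} < C$ for all $\cL/L/K$, which is the claim.

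For (ii) the one extra point is to check that $\mu_n$ is genuinely a $\Gal_{K_{S\cup T}/K}$-module, i.e.\ that $\mu_n \subseteq K_{S\cup T}$; this is exactly where the remaining hypotheses enter. The condition $n \in \caO_{K,S\cup T}^{\ast}$ means $S_{\ell} \subseteq S\cup T$ for every prime $\ell \mid n$, and $T \supseteq S_{\infty}\sm S$ gives $S_{\infty}\subseteq S\cup T$; since $K(\mu_n)/K$ is unramified outside $S_{\infty}\cup\bigcup_{\ell\mid n}S_{\ell}$, we obtain $K(\mu_n) \subseteq K_{S\cup T}$. Thus $A := \mu_n$ is a finite $\Gal_{K_{S\cup T}/K}$-module with trivializing extension $K(\mu_n)$, so $\cL(A) = \cL(\mu_n)$, which by hypothesis is an extension for which $S$ — hence also $S\sm T$ — is stable; applying the Proposition exactly as in (i) yields $C>0$ with $\abs{\Sha^1(K_{S\cup T}/L, S\sm T; \mu_n)} < C$ for all $\cL/L/K$.

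The argument is formal once the Proposition is available, so there is no serious obstacle. The only points requiring (minor) care are the reduction from $S$ to $S\sm T$ via the density-zero insensitivity of stability, and, in (ii), the verification that $\mu_n$ is defined over $K_{S\cup T}$ — which is precisely the role played by the hypotheses $\delta_K(T)=0$, $T\supseteq S_{\infty}\sm S$ and $n\in\caO_{K,S\cup T}^{\ast}$.
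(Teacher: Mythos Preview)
Your proof is correct and follows exactly the approach the paper intends: the corollary is stated there without proof, as an immediate consequence of the preceding Proposition, and you have spelled out precisely the two small verifications needed --- that $S\backsimeq S\sm T$ preserves stability (via Lemma~\ref{prop:firstpropsofstabletrivs}(iii)), and, for part~(ii), that the hypotheses $T\supseteq S_\infty\sm S$ and $n\in\caO_{K,S\cup T}^\ast$ guarantee $\mu_n\subseteq K_{S\cup T}$ so that $\mu_n$ is a $\Gal_{K_{S\cup T}/K}$-module with $\cL(\mu_n)=\cL(A)$.
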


\begin{rem}
The case $S$ stable for $\cL/K$, but not stable for $\cL(\mu_p)/K$ still remains mysterious: one neither can show such an uniform bound by the same methods, nor find counterexamples. Moreover, the same kind of arguments not even shows that $\Sha^1(K_{S \cup T}/K, S \sm T, \mu_p)$ must be finite.
\end{rem}




\section{Arithmetic applications} \label{sec:Arith_appl}

\subsection{Overview and results}\label{sec:GWTRExT_overview}

In this section we will be interested in the applications of the Hasse principles proven in the preceding section for stable sets. In particular, we will show two versions of the Grunwald-Wang theorem for them, with varying assumptions: we will have a strong Grunwald-Wang result if we assume strong $p$-stability (Section \ref{sec:GWT}) and only a weaker $\dirlim$-version (which is still enough for applications) after weakening the assumption to sharp $p$-stability (Section \ref{sec:GWT_and_dagger}). After this we will consider realization of local extensions, Riemann's existence theorem and the cohomological dimension of $\Gal_{K,S}$. For each of these three results there is a pro-finite and a pro-p version respectively. We state them below and give proofs in Section \ref{sec:proof_of_dagger_results}. Further, in Section \ref{sec:hasse_principle_for_Sha2_without_p_inv} we prove a Hasse principle for $\Sha^2$ for constant $p$-primary coefficients without the assumption $p \in \caO_{K,S}^{\ast}$ (cf. Corollary 
\ref{cor:Sha2is0forstable} and Remark \ref{rem:bloedremark}).

\begin{thm}\label{thm:LocExt_RExt_CD}\label{prop:locext_for_dagger_p_rel} \label{thm:REXT_for_dagger_p_rel} \label{thm:CD_for_dagger_p_rel}
Let $K$ be a number field, $p$ a rational prime and $T \supseteq S \supseteq R$ sets of primes of $K$ with $R$ finite.

\begin{itemize}
\item[($A_p$)] Assume $S$ is sharply $p$-stable for $K_S^R(p)/K$. Then
\[ K_S^R(p)_{\fp} = \begin{cases} K_{\fp}(p) & \text{if } \fp \in S \sm R \\ K_{\fp}^{\nr}(p) & \text{if } \fp \not\in S. \end{cases} \]
\item[(A)] Assume $S$ is sharply $p$-stable for $K_S^R/K$. Then
\[ K_{S,\fp}^R \supseteq \begin{cases} K_{\fp}(p) & \text{if } \fp \in S \sm R \\ K_{\fp}^{\nr}(p) & \text{if } \fp \not\in S. \end{cases} \]
\item[($B_p$)] Assume $S$ is sharply $p$-stable for $K_S^R(p)/K$. Then the natural map
\[ \phi_{T,S}^R(p) \colon \bigast\limits_{\fp \in R(K_S^R(p))} \cG_{\fp}(p) \ast \bigast\limits_{\fp \in (T \sm S)(K_S^R(p))} I_{\fp}(p) \stackrel{\sim}{\longrar} \Gal_{K_T(p)/K_S^R(p)} \]
\noindent is an isomorphism, where $I_{\fp}(p) := \Gal_{K_{\fp}(p)/K_{\fp}^{\nr}(p)} \subseteq \cG_{\fp}(p) := \Gal_{K_{\fp}(p)/K_{\fp}}$.
\end{itemize}
Let $K_T^{\prime}(p)/K_S^R$ denote the maximal pro-$p$ subextension of $K_T/K_S^R$.
\begin{itemize}
\item[(B)] Assume $S$ is sharply $p$-stable for $K_S^R/K$. Then the natural map
\[ \phi_{T,S}^R \colon \bigast\limits_{\fp \in R(K_S^R)} \cG_{\fp}(p) \ast \bigast\limits_{\fp \in (T \sm S)(K_S^R)} I_{\fp}^{\prime}(p) \stackrel{\sim}{\longrar} \Gal_{K_T^{\prime}(p)/K_S^R} \]
\noindent is an isomorphism, where $I_{\fp}^{\prime}(p)$ denotes the Galois group of the maximal pro-$p$ extension of $K_{S,\fp}^R$.
\end{itemize}
Assume $p$ is odd, or $K$ is totally imaginary.
\begin{itemize}
\item[($C_p$)] Assume $S$ is sharply $p$-stable for $K_S^R(p)/K$. Then
\[ \cd \Gal_{K,S}^R(p) = \scd \Gal_{K,S}^R(p) = 2. \]
\item[(C)] Assume $S$ is sharply $p$-stable for $K_S^R/K$. Then
\[ \cd_p \Gal_{K,S}^R = \scd_p \Gal_{K,S}^R = 2. \]
\end{itemize}
\end{thm}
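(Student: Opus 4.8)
The plan is to deduce all of (A)--(C) (and their pro-$p$ analogues) from the Hasse principles of Section~\ref{sec:Shagroups_of_stable_sets} by way of the Grunwald--Wang theorem, following \cite{NSW} Chapters~IX and X with ``density one'' replaced by the relative hypothesis $(\dagger)_p^{\rm rel}$. First I would use Lemma~\ref{lm:dagger_rel_p_is_nice} to pass to a finite subextension $L_0$ of $K_S^R$ (resp.\ of $K_S^R(p)$) over which the pair already stabilizes, so that we may assume the stabilizing field is the base field itself; since $R$ is finite, this replaces $\Gal_{K,S}^R$ by an open subgroup only, which affects none of the statements. The engine is a $\dirlim$-version of Grunwald--Wang, to be established in Section~\ref{sec:GWT_and_dagger}: under $(\dagger)_p^{\rm rel}$, for every finite $p$-primary module $A$ the localization map
\[ \coh^1(\Gal_{K,S}^R(p), A) \longrar \prod_{\fp \in S \sm R} \coh^1(K_\fp, A) \]
has dense image in the appropriate restricted product, with trivial restriction at the primes of $R$. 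This rests on the $\Sha^1$- and $\Sha^2$-vanishing results of Section~\ref{sec:Shagroups_of_stable_sets} (Theorem~\ref{thm:keyresult} over the stabilizing field, to handle the kernel, and Corollary~\ref{cor:Sha2is0forstable} for the dual module through Poitou--Tate duality over a sufficiently large finite subextension of $K_S^R(p)/K$); disposing of the auxiliary hypotheses $S \supseteq S_\infty$ and $\abs{A} \in \bN(S)$ of that corollary is one of the technical points of Section~\ref{sec:GWT_and_dagger}.

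For (A) and ($A_p$): fix $\fp \in S \sm R$ and show by induction on $[N_\fp : K_\fp]$ that every finite $p$-extension $N_\fp/K_\fp$ embeds into $K_{S,\fp}^R$; the inductive step reduces to realizing a single cyclic degree-$p$ extension, i.e.\ to producing a class in $\coh^1(\Gal_{K,S}^R, \bZ/p\bZ)$ (after passing to $K_S^R(\mu_p)$ and a Kummer twist when $\mu_p \not\subseteq K_S^R$) with prescribed nonzero restriction at $\fp$ and trivial restriction at $R$. Since we prescribe behaviour at the single prime $\fp$ only, the ``special case'' of Grunwald--Wang never intervenes, and the $\dirlim$-Grunwald--Wang theorem produces such a class; the associated cyclic extension lies in $K_S^R$ and has the prescribed completion at $\fp$. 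For $\fp \not\in S$ the claim $K_{S,\fp}^R \supseteq K_\fp^{\nr}(p)$ is the unramified analogue and follows already from the cyclic-decomposition-group lemma used in the proof of Lemma~\ref{lm:Sha_in_coh_ast}, since any prescribed cyclic $p$-quotient of the unramified local Galois group occurs as a decomposition group of (infinitely many) primes of $S$. The pro-$p$ statement ($A_p$) is the same argument carried out inside $K_S^R(p)$.

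For (B) and ($B_p$): follow the free-product machinery of \cite{NSW} Chapter~X. The map $\phi_{T,S}^R(p)$ is a homomorphism from the free pro-$p$ product of the local groups $\cG_\fp(p)$ ($\fp \in R$) and inertia groups $I_\fp(p)$ ($\fp \in T \sm S$) into $\Gal_{K_T(p)/K_S^R(p)}$, and by the cohomological criterion for free products (\cite{NSW} Chapter~IV) it suffices to check that $\phi_{T,S}^R(p)$ is an isomorphism on $\coh^1$ and injective on $\coh^2$ with $\bZ/p\bZ$-coefficients. The $\coh^1$-statement is precisely the local--global principle that a class on $\Gal_{K_T(p)/K_S^R(p)}$ is trivial iff it is trivial at all $\fp \in R \cup (T \sm S)$, i.e.\ $\Sha^1 = 0$ over $K_S^R(p)$, together with part (A) identifying the relevant local groups, while surjectivity is the $\dirlim$-Grunwald--Wang theorem; the $\coh^2$-injectivity follows the same pattern from $\Sha^2 = 0$. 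The variant (B) over the non-pro-$p$ base $K_S^R$ is obtained from ($B_p$) by the usual descent, replacing $I_\fp(p)$ by $I_\fp^{\prime}(p)$ and $K_T(p)$ by the maximal pro-$p$ subextension $K_T^{\prime}(p)$ of $K_T/K_S^R$.

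For (C) and ($C_p$): the bound $\cd_p \Gal_{K,S}^R \le 2$ is derived, as in \cite{NSW} \S10.4, from parts (A), (B) and the degree-$2$ surjectivity of the localization map (Poitou--Tate together with $\Sha^2 = 0$): Riemann's existence theorem, applied after enlarging $T$ to contain $S_p \cup S_\infty$, presents the relevant normal subgroup as a free pro-$p$ product of local Galois groups, all of cohomological dimension $\le 2$ under the hypothesis that $p$ is odd or $K$ is totally imaginary, and a Hochschild--Serre spectral sequence argument then kills $\coh^3$. The equality $\cd_p = 2$ is witnessed, through part (A), by a single prime $\fp \in S \sm R$ with $\coh^2(K_\fp, \mu_p) \ne 0$, onto which $\coh^2(\Gal_{K,S}^R, \mu_p)$ surjects, and $\scd_p = 2$ follows by the same Poitou--Tate reduction to the modified cohomology of the real places, which vanishes precisely under that hypothesis; the pro-$p$ case ($C_p$) is the analogous statement for $\Gal_{K,S}^R(p)$. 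I expect the Riemann existence step to be the main obstacle: making the free-product criterion apply requires combining part (A), both Hasse principles, and a careful bookkeeping of which primes of the intermediate field $K_S^R$ (resp.\ $K_S^R(p)$) actually carry inertia, and all of this has to be done under the merely relative hypothesis $(\dagger)_p^{\rm rel}$ rather than outright stability of $S$ --- which is also the reason the preliminary reduction via Lemma~\ref{lm:dagger_rel_p_is_nice} comes first.
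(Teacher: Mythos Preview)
Your overall plan matches the paper's: (A) via the $\dirlim$-Grunwald--Wang of Section~\ref{sec:GWT_and_dagger}, (B) via the cohomological criterion for free pro-$p$ products with $\coh^1$-surjectivity from Theorem~\ref{thm:dirlim_GW_coker_vanishes} and $\coh^2$-injectivity from Proposition~\ref{prop:lim_over_Sha2_vanishes}, and (C) via Hochschild--Serre for $K_{S\cup V}(p)/K_S^R(p)/K$ with $V=(S_p\cup S_\infty)\sm S$. Two points deserve correction.

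\textbf{A genuine gap.} Your argument for $\fp\not\in S$ in part~(A) does not work. The lemma behind Lemma~\ref{lm:Sha_in_coh_ast} says that in a finite Galois extension $L/K$, every cyclic $p$-subgroup of $\Gal_{L/K}$ is the decomposition group of \emph{some} prime in a $p$-stable set; it tells you nothing about the decomposition behaviour at the fixed prime $\fp$, which is not even in $S$. What you need is precisely the opposite direction: given a prescribed unramified cyclic $p$-extension of $K_\fp$, realize it as the completion at $\fp$ of a subextension of $K_S^R/K$. The paper handles both cases $\fp\in S\sm R$ and $\fp\not\in S$ uniformly: set $T:=\{\fp\}\cup R\cup S_p\cup S_\infty$, prescribe $\alpha_\fq=0$ for $\fq\in R$, $\alpha_\fq$ unramified (e.g.\ zero) for $\fq\in T\sm(S\cup\{\fp\})$, and $\alpha_\fp$ the desired class (unramified if $\fp\not\in S$); then Theorem~\ref{thm:dirlim_GW_coker_vanishes} produces, after enlarging the base inside $K_S^R(p)$, a global class whose associated $\bZ/p\bZ$-extension is unramified outside $S$ and split in $R$, hence lies in $K_S^R(p)$.

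\textbf{Two minor confusions.} First, no Kummer twist is needed in (A): the coefficients are $\bZ/p\bZ$ throughout, and the role of $\mu_p$ in the hypothesis $(\dagger)_p^{\rm rel}$ is internal to the proof of Theorem~\ref{thm:dirlim_GW_coker_vanishes} via Poitou--Tate duality. Second, in (B$_p$) the injectivity of $\coh^1(\phi_{T,S}^R(p))$ is not a $\Sha^1$-statement; it comes for free from the obvious surjectivity of $\phi_{T,S}^R(p)$ (the local groups generate the target). It is the \emph{surjectivity} of $\coh^1(\phi_{T,S}^R(p))$ that needs the $\dirlim$-Grunwald--Wang, and the injectivity on $\coh^2$ that needs the $\dirlim$-$\Sha^2$-vanishing of Proposition~\ref{prop:lim_over_Sha2_vanishes}.
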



\subsection{Grunwald-Wang theorem and strong $p$-stability} \label{sec:GWT}

Consider the cokernel of the global-to-local restriction homomorphism

\[ \coker^i(K_S/K, T; A) := \coker(\res^i \colon \coh^i(K_S/K, A) \rar \prod\nolimits_{\fp \in T}^{\prime} \coh^i(\cG_{\fp}, A) ), \]

\noindent where $A$ is a finite $\Gal_{K,S}$-module, $T \subseteq S$ and $\prod^{\prime}$ means that almost all classes are unramified. If $A$ is a trivial $\Gal_{K,S}$-module, then the vanishing of this cokernel is equivalent to the existence of global extensions unramified outside $S$, which realize given local extensions at primes in $T$. If $S$ has density $1$, the set $T$ is finite, $A$ is constant and we are not in a special case, this vanishing is essentially the statement of the Grunwald-Wang theorem. Certain conditions on $S,T,A$, under which this cokernel vanishes are considered in \cite{NSW} chapter IX \S2. All of them require $S$ to have certain minimal density. We prove analogous results for stable sets.

\begin{cor}\label{surj_of_H1_map_for_stable_sets}
Let $K$ be a number field, $T \subseteq S$ sets of primes of $K$ with $S_{\infty} \subseteq S$. Let $A$ be a finite $\Gal_{K,S}$-module with $\abs{A} \in \bN(S)$. Assume that $T$ is finite and $S$ is $p$-stable, where $p$ is the smallest prime divisor of $\abs{A}$. For any $p$-stabilizing field $L$ for $S$ for $K_S/K$, such that
$\coh^1_{\ast}(L(A^{\prime})/L,A^{\prime}) = 0$, we have:
\[\coker^1(K_S/L, T; A) = 0. \]
\end{cor}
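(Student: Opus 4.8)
The plan is to reduce the vanishing of $\coker^1(K_S/L, T; A)$ to the vanishing of the dual Shafarevich group $\Sha^1(K_S/L, S; A^{\prime})$ and then invoke Corollary \ref{cor:Sha2is0forstable} (or, more directly, Theorem \ref{thm:keyresult}). The starting point is the Poitou--Tate nine-term exact sequence (cf.\ \cite{NSW} 8.6.10), which for the module $A$ over $\Gal_{K,S}$ with $\abs{A} \in \bN(S)$ and $S_\infty \subseteq S$ gives an exact piece
\[ \Sha^1(K_S/L; A) \rar \coh^1(K_S/L, A) \rar \bigoplus_{\fp \in S} \coh^1(\cG_{\fp}, A) \rar \Sha^1(K_S/L, S; A^{\prime})^{\vee} \rar \Sha^2(K_S/L; A) \rar \cdots \]
The cokernel of the middle restriction map is thus squeezed between $\Sha^1(K_S/L, S; A^{\prime})^{\vee}$ and $\Sha^2(K_S/L;A)$; both vanish under our hypotheses by Theorem \ref{thm:keyresult} applied to the pair $(S,K_S/K)$, the module $A^{\prime}$, and the $p$-stabilizing field $L$ with $\coh^1_\ast(L(A^{\prime})/L, A^{\prime}) = 0$, together with Poitou--Tate duality $\Sha^2(K_S/L;A)\cong \Sha^1(K_S/L;A^{\prime})^\vee$. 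So the full-$S$ version $\coker^1(K_S/L,S;A)=0$ is essentially immediate.

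The remaining work is to pass from $T$ to $S$, i.e.\ to show that restricting the target product from $\fp \in S$ to $\fp \in T$ (with $T$ finite) does not create a cokernel. Here I would follow the argument of \cite{NSW} 9.2.2/9.2.3 verbatim: since $T$ is finite one uses the freedom to prescribe trivial (unramified) local conditions at the primes of $S \sm T$, and the point is that a class in $\prod_{\fp\in T}\coh^1(\cG_\fp,A)$ extends to a class in $\bigoplus_{\fp\in S}\coh^1(\cG_\fp,A)$ that is unramified almost everywhere — this is where the $\prod'$ (almost-all-unramified) convention enters, and it needs $\abs{A}\in\bN(S)$ so that unramified local cohomology behaves well at primes away from $T$. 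Combined with the surjectivity onto $\bigoplus_{\fp\in S}\coh^1(\cG_\fp,A)$ from the previous paragraph, this yields $\coker^1(K_S/L,T;A)=0$.

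I expect the genuine obstacle to be purely bookkeeping rather than conceptual: one must be careful that the $p$-stabilizing field $L$ still works after the base change from $K$ to $L$ (it does, by Lemma \ref{prop:firstpropsofstabletrivs}(ii), since any finite subextension of $K_S/L$ is again $p$-stabilizing), and that the special-case subtlety in \cite{NSW} 9.1.6/9.1.9 does not intervene — but by hypothesis we are assuming $\coh^1_\ast(L(A^{\prime})/L, A^{\prime}) = 0$ outright, which sidesteps it. Thus the proof is essentially: (1) cite Theorem \ref{thm:keyresult} to kill $\Sha^1(K_S/L,S;A^{\prime})$; (2) cite Poitou--Tate to kill $\Sha^2(K_S/L;A)$ and identify the cokernel over all of $S$; (3) cite the finite-$T$ reduction of \cite{NSW} Chapter IX \S2. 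The only thing to write out is the diagram chase assembling these three inputs.
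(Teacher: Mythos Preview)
Your first paragraph contains a genuine error. In the Poitou--Tate nine-term sequence the piece you quote reads
\[
\coh^1(K_S/L,A)\longrightarrow P^1(K_S/L,A)\longrightarrow \coh^1(K_S/L,A')^{\vee}\longrightarrow \coh^2(K_S/L,A)\longrightarrow\cdots,
\]
so the cokernel of the restriction map over \emph{all} of $S$ embeds into $\coh^1(K_S/L,A')^{\vee}$, not into $\Sha^1(K_S/L,S;A')^{\vee}$. Vanishing of $\Sha^1(K_S/L,S;A')$ and $\Sha^2(K_S/L;A)$ therefore does \emph{not} force $\coker^1(K_S/L,S;A)=0$; on the contrary, exactness gives $\coker^1(K_S/L,S;A)^{\vee}\cong \coh^1(K_S/L,A')/\Sha^1(K_S/L,S;A')$, which is typically nonzero. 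So your ``full-$S$ version'' is false, and the subsequent reduction from $S$ to $T$ (which would be a trivial projection if the full-$S$ statement held) never gets off the ground.

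The paper's argument is one line shorter and avoids this trap: since $T$ is finite, $S\sm T$ is still $p$-stable for $K_S/K$ with the \emph{same} $p$-stabilizing field $L$, so Theorem~\ref{thm:keyresult} applied to the set $S\sm T$ and the module $A'$ yields $\Sha^1(K_S/L,S\sm T;A')=0$. Then \cite{NSW}~9.2.2 --- which gives the exact sequence
\[
0\longrightarrow \Sha^1(K_S/L,S;A')\longrightarrow \Sha^1(K_S/L,S\sm T;A')\longrightarrow \coker^1(K_S/L,T;A)^{\vee}\longrightarrow 0
\]
--- immediately gives $\coker^1(K_S/L,T;A)=0$. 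The key point you missed is that one must apply the Hasse principle to $S\sm T$ rather than to $S$: it is the \emph{relative} Shafarevich group (local conditions only away from $T$) that controls $\coker^1$ over $T$, and you get it for free because removing finitely many primes does not disturb $p$-stability.
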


\begin{proof} Since $T$ is finite and $S$ is $p$-stable for $K_S/K$, $S \sm T$ also is $p$-stable for $K_S/K$, and the $p$-stabilizing fields for $S$ and $S \sm T$ are equal. Let $L$ be as in the corollary. By Theorem \ref{thm:keyresult} applied to $K_S/L$, $S \sm T$ and $A^{\prime}$, we obtain  $\Sha^1(K_S/L, S \sm T; A^{\prime}) = 0$. Then \cite{NSW} 9.2.2 implies $\coker^1(K_S/L, T; A) = 0$.
\end{proof}

Now we give a generalization of \cite{NSW} 9.2.7.

\begin{thm} \label{thm:gruwi_cokerform}
Let $K$ be a number field, $S$ a set of primes of $K$. Let $T_0, T \subseteq S$ be two disjoint subsets, such that $T_0$ is finite. Let $p$ be a rational prime and $r > 0$ an integer. Assume $S \sm T$ is $p$-stable for $K_{S \cup S_p \cup S_{\infty}}/K$ with $p$-stabilizing field $L_0$, which is contained in $K_S$. Then for any finite $K_S/L/L_0$, such that we are not in the special case $(L, p^r, S \sm (T_0 \cup T))$, the canonical map

\[ \coh^1(K_S/L, \bZ/p^r\bZ ) \rar \bigoplus_{\fp \in T_0(L)} \coh^1(\cG_{\fp}, \bZ/p^r\bZ) \oplus \bigoplus_{\fp \in T(L)} \coh^1(\cI_{\fp}, \bZ/p^r\bZ)^{\cG_{\fp}} \]
\noindent is surjective, where $\cI_{\fp} \subseteq \cG_{\fp} = \Gal_{K_{\fp}^{\sep}/L_{\fp}}$ is the inertia subgroup. If we are in the special case $(L, p^r, S \sm (T_0 \cup T))$, then $p = 2$ and the cokernel of this map is of order 1 or 2.
\end{thm}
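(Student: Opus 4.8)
The plan is to follow the proof of \cite{NSW} 9.2.7, feeding in the Hasse principle for stable sets — Corollary \ref{surj_of_H1_map_for_stable_sets}, the stable analogue of \cite{NSW} 9.2.2 — wherever the classical argument uses that $S$ contains $S_p$. The extra work is a bookkeeping with ramification, forced by the fact that $p^r$ need not lie in $\caO_{K,S}^{\ast}$, so that $\mu_{p^r}$ is not a $\Gal_{K_S/K}$-module and Poitou--Tate duality is not directly available over $K_S$. I would first reduce to a finite problem. The inflation--restriction sequence for $\cI_{\fp}\subseteq\cG_{\fp}$ together with $\cd(\cG_{\fp}/\cI_{\fp})=1$ gives a surjection $\coh^1(\cG_{\fp},\bZ/p^r\bZ)\surj\coh^1(\cI_{\fp},\bZ/p^r\bZ)^{\cG_{\fp}}$, so it is enough to realize finitely many \emph{full} local conditions. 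Concretely, an element $\xi$ of the target has finite support, say $T_0(L)\cup T''$ with $T''\subseteq T(L)$; choose a finite $T'\subseteq T$ with $T'(L)\supseteq T''$ that moreover contains every prime of $T$ ramified in $L/K$. At the primes of $T'(L)\sm T''$, and at the auxiliary primes below, we shall prescribe the \emph{split} class, which forces the global cocycle to be unramified there.

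Next, put $V:=(S\sm T)\cup T'\subseteq S$ and $\widetilde V:=V\cup S_p\cup S_\infty$. By the choice of $T'$, both $L$ and $L_0$ are unramified outside $\widetilde V$ (their ramification inside $S$ lies in $T$, and $S\sm\widetilde V\subseteq T\sm T'$), whence $L_0\subseteq L\subseteq K_{\widetilde V}$; moreover $S_\infty\subseteq\widetilde V$ and $p^r\in\bN(\widetilde V)$, so $\mu_{p^r}$ is a $\Gal_{K_{\widetilde V}/K}$-module, dual to $\bZ/p^r\bZ$. Since $S\sm T$ is $p$-stable for $K_{S\cup S_p\cup S_\infty}/K$ with stabilizing field $L_0\subseteq K_{\widetilde V}\subseteq K_{S\cup S_p\cup S_\infty}$, Lemma \ref{prop:firstpropsofstabletrivs}(ii)--(iv) shows that $\widetilde V$ — which contains $S\sm T$ — is $p$-stable for $K_{\widetilde V}/K$, with stabilizing field $L_0$ and hence also with $L$.

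Now apply Corollary \ref{surj_of_H1_map_for_stable_sets} over $K_{\widetilde V}/L$ to the module $\bZ/p^r\bZ$ and the finite set $U:=T_0\cup T'\cup\big((S_p\cup S_\infty)\sm V\big)$. If we are not in the special case $(L,p^r,S\sm(T_0\cup T))$, then $\coh^1_{\ast}(L(\mu_{p^r})/L,\mu_{p^r})=0$ by \cite{NSW} 9.1.6, so $\coker^1(K_{\widetilde V}/L,U;\bZ/p^r\bZ)=0$, i.e.\ $\coh^1(K_{\widetilde V}/L,\bZ/p^r\bZ)\surj\bigoplus_{\fp\in U(L)}\coh^1(\cG_{\fp},\bZ/p^r\bZ)$. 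Prescribe the given classes at $T_0(L)$, a lift of $\xi_{\fp}$ at $\fp\in T''$, and the split class at every remaining prime of $U(L)$. The resulting $c\in\coh^1(K_{\widetilde V}/L,\bZ/p^r\bZ)$ is split on $(S_p\cup S_\infty)\sm V=\widetilde V\sm V$ and, being unramified outside $\widetilde V$, is therefore unramified outside $V\subseteq S$; thus $c\in\coh^1(K_S/L,\bZ/p^r\bZ)$. A direct check (using $T''\subseteq T'(L)$ and $T_0\cap T=\emptyset$) shows that $c$ is unramified at every prime of $T(L)\sm T''$, so that the image of $c$ in the target is exactly $\xi$. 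This proves the surjectivity.

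In the special case one has $p=2$, and $\coh^1_{\ast}(L(\mu_{2^r})/L,\mu_{2^r})\subseteq\coh^1(L(\mu_{2^r})/L,\mu_{2^r})\cong\bZ/2\bZ$ by \cite{NSW} 9.1.6; the quantitative form of \cite{NSW} 9.2.2 (applicable over $K_{\widetilde V}$ by the above, with $\widetilde V\sm U=(S\sm T)\sm T_0$ again $p$-stable) then shows that $\coker^1(K_{\widetilde V}/L,U;\bZ/2^r\bZ)$, and hence the cokernel of the map in question, has order at most $2$. I expect the one genuinely delicate point to be checking that the ``special case'' is insensitive to the passage from $S\sm(T_0\cup T)$ to $S\sm(T_0\cup T')$ to $\widetilde V\sm U$ made above — i.e.\ that enlarging the ``unconditioned'' set by the finite set $S_p\cup S_\infty$ and shrinking it by the finite set $T'$ neither creates nor destroys the exceptional $\bZ/2\bZ$ — so that the stated dichotomy is sharp; this relies on the explicit criterion of \cite{NSW} Chapter IX, \S1 for the non-vanishing of $\Sha^1(\,\cdot\,;\mu_{2^r})$.
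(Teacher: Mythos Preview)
Your approach is essentially the paper's: the paper simply says ``this follows from Corollary~\ref{cor:limHasseforstables}(ii) exactly as \cite{NSW}~9.2.7 follows from \cite{NSW}~9.2.3(ii)'', and your write-up is a careful unpacking of precisely that argument --- enlarge to $\widetilde V \supseteq S_p \cup S_\infty$, apply the stable Hasse principle there, impose split conditions at the auxiliary primes, and descend back to $K_S$.

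There is one slip. You invoke Corollary~\ref{surj_of_H1_map_for_stable_sets}, whose hypothesis is $\coh^1_{\ast}(L(\mu_{p^r})/L,\mu_{p^r})=0$, and you claim this follows from ``not in the special case $(L,p^r,S\sm(T_0\cup T))$'' via \cite{NSW}~9.1.6. That is not quite right: \cite{NSW}~9.1.6 gives $\coh^1(L(\mu_{p^r})/L,\mu_{p^r})=0$ only when one is not in the special case $(L,p^r)$, which is a strictly stronger hypothesis --- one can be in the special case $(L,p^r)$ yet \emph{not} in $(L,p^r,S\sm(T_0\cup T))$ because the set contains enough primes. The fix is to bypass Corollary~\ref{surj_of_H1_map_for_stable_sets} and instead feed Corollary~\ref{cor:limHasseforstables}(ii) directly into \cite{NSW}~9.2.2: Corollary~\ref{cor:limHasseforstables}(ii) gives $\Sha^1(K_{\widetilde V}/L,\widetilde V\sm U;\mu_{p^r})=0$ under exactly the hypothesis ``not in the special case $(L,p^r,\widetilde V\sm U)$'', and then \cite{NSW}~9.2.2 yields $\coker^1(K_{\widetilde V}/L,U;\bZ/p^r\bZ)=0$. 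This is the route the paper intends.

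Your concluding worry is in fact a non-issue: you computed $\widetilde V\sm U=(S\sm T)\sm T_0=S\sm(T_0\cup T)$ on the nose, so the special-case condition $(L,p^r,\widetilde V\sm U)$ is literally identical to $(L,p^r,S\sm(T_0\cup T))$, and no insensitivity argument is needed.
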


\begin{proof}
This follows from Corollary \ref{cor:limHasseforstables}(ii) in exactly the same way as \cite{NSW} 9.2.7 follows from \cite{NSW} 9.2.3(ii).
\end{proof}

\begin{rems} \mbox{}
\begin{itemize}
\item[(i)] If $\delta_K(T) = 0$, the condition ``$S \sm T$ is $p$-stable for $K_{S \cup S_p \cup S_{\infty}}/K$ with a $p$-stabilizing field contained in $K_S$''  is equivalent to ``$S$ is strongly $p$-stable''.
\item[(ii)]  If $\delta_K(S) = 1$ and $\delta_K(T) = 0$, then $L_0 = K$ is a persisting field for $S \sm T$ for any $\cL/K$ and the condition in the theorem is automatically satisfied. Thus our result is a generalization of \cite{NSW} 9.2.7. To show that it is a proper generalization, we give the following example. Let $N/M/K$ be finite Galois extensions of $K$, such that $N/K$ (and hence also $M/K$) is totally ramified in a non-archimedean prime $\fl$ of $K$, lying over the rational prime $\ell$. Let $\sigma \in \Gal_{M/K}$ and let $\tilde{\sigma} \in \Gal_{N/K}$ be a preimage of $\sigma$. Let $S \supseteq T$ be such that
\begin{itemize}
\item[-] $S \backsimeq P_{M/K}(\sigma)$, $\fl \not\in S$ and $T \backsimeq P_{M/K}(\sigma) \sm P_{N/K}(\tilde{\sigma})$.
\end{itemize}

\noindent  Then $S \sm T \backsimeq P_{N/K}(\tilde{\sigma})$ is persistent for $K_{S \cup S_p \cup S_{\infty}}/K$ for any $p \neq \ell$, and, moreover, $K$ is a persisting field (indeed, this follows from $K_{S \cup S_p \cup S_{\infty}} \cap N = K$). Hence the sets $S \supseteq T$ satisfies the conditions of the theorem with respect to each $p \neq \ell$. Observe that in this example $T$ is itself persistent for $K_{S \cup S_p \cup S_{\infty}}/K$ with persisting field $K$. In \cite{NSW} 9.2.7, the set $T$ must have density zero.


\end{itemize}
\end{rems}

From this we obtain the following classical form of the Grunwald-Wang theorem. The proof is the same as in \cite{NSW} 9.2.8.

\begin{cor}
Let $T \subseteq S$ be sets of primes of a number field $K$.  Let $A$ be a finite abelian group. Assume that $T$ is finite and that for any prime divisor $p$ of $\abs{A}$, $S$ is $p$-stable for $K_{S \cup S_p \cup S_{\infty}}/K$ with stabilizing field $K$. For all $\fp \in T$, let $L_{\fp}/K_{\fp}$ be a finite abelian extension, such that its Galois group can be embedded into $A$. Assume that we are not in the special case $(K, {\rm exp}(A), S \sm T)$. Then there exists a global abelian extension $L/K$ with Galois group $A$, unramified outside $S$, such that $L$ has completion $L_{\fp}$ at $\fp \in T$.
\end{cor}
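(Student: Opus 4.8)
\emph{Proof idea.} The plan is to deduce this from Theorem~\ref{thm:gruwi_cokerform} in exactly the way \cite{NSW} 9.2.8 is deduced from \cite{NSW} 9.2.7. First I would reduce to cyclic coefficients of prime power order: by the structure theorem write $A \cong \bigoplus_{j=1}^m \bZ/p_j^{r_j}\bZ$ with each $p_j$ prime, and note that for trivial coefficients an element of $\coh^1$ of any group is just a continuous homomorphism into the coefficient module, so that giving a homomorphism into $A$ (globally, or locally at a prime) is the same as giving an $m$-tuple of homomorphisms into the cyclic factors; the whole problem thus decouples coordinate-wise. Concretely, for each $\fp \in T$ fix an embedding $\Gal_{L_\fp/K_\fp} \hookrightarrow A$ and let $\chi_\fp \in \coh^1(\cG_\fp, A) = \Hom(\cG_\fp, A)$ be the composite $\cG_\fp \twoheadrightarrow \Gal_{L_\fp/K_\fp} \hookrightarrow A$, with $j$-th component $\chi_\fp^{(j)} \in \coh^1(\cG_\fp, \bZ/p_j^{r_j}\bZ)$.

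Next I would arrange surjectivity of the eventual global character by adjoining auxiliary primes (the prescribed local data at $T$ may detect only a proper subgroup of $A$, e.g.\ may all be trivial). Since $S$ is $p$-stable with stabilizing field $K$ it has positive Dirichlet density, hence is infinite, so one may pick distinct non-archimedean primes $\fq_1, \dots, \fq_m \in S \sm T$ none lying above $2$, and put $T' := T \cup \{\fq_1, \dots, \fq_m\}$. At $\fq_j$ I prescribe as $j$-th component a surjective unramified character $\cG_{\fq_j} \twoheadrightarrow \bZ/p_j^{r_j}\bZ$ (which exists, local fields having cyclic unramified extensions of every degree) and as all other components the trivial character. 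Because $T' \sm T$ is a finite set of primes, none archimedean and none above $2$, the hypotheses transfer: $S$ remains $p_j$-stable for $K_{S\cup S_{p_j}\cup S_\infty}/K$ with stabilizing field $K$ (replacing the witnessing subset $S_0$ by $S_0 \sm T'$ changes no density), and the assumption that we are not in the special case $(K, \exp(A), S\sm T)$ carries over to ``not in the special case $(K, p_j^{r_j}, S\sm T')$'' for every $j$ — this last passage, both from $\exp(A)$ to the prime powers $p_j^{r_j}$ and from $S\sm T$ to $S\sm T'$, is precisely why the $\fq_j$ are chosen away from $2$ and $\infty$, and is handled exactly as in \cite{NSW} 9.2.8.

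Now apply Theorem~\ref{thm:gruwi_cokerform} for each $j$ with $L = L_0 = K$, with the theorem's ``$T_0$'' taken to be $T'$ and its ``$T$'' taken to be empty, and coefficients $\bZ/p_j^{r_j}\bZ$; this produces $\chi^{(j)} \in \coh^1(K_S/K, \bZ/p_j^{r_j}\bZ) = \Hom(\Gal_{K,S}, \bZ/p_j^{r_j}\bZ)$ restricting to the prescribed $\chi_\fp^{(j)}$ at every $\fp \in T'$. Assemble these into $\chi := (\chi^{(j)})_{j}\colon \Gal_{K,S} \to A$. Then $\chi$ restricts at each $\fp \in T$ to $\chi_\fp$, while its restriction to the decomposition group at $\fq_j$ has image exactly the $j$-th cyclic factor, so $\im(\chi) \supseteq \sum_j \bZ/p_j^{r_j}\bZ = A$; thus $\chi$ is surjective. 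Let $L$ be the fixed field of $\ker\chi$ inside $K_S$: then $L/K$ is abelian, unramified outside $S$, with $\Gal_{L/K} \cong \im(\chi) = A$, and for $\fp \in T$ the decomposition group of a prime of $L$ above $\fp$ equals $\im(\chi_\fp) = \Gal_{L_\fp/K_\fp}$ with the corresponding completion the fixed field of $\ker(\chi_\fp)$, namely $L_\fp$.

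The only step requiring genuine care is the second paragraph, the bookkeeping of the special case; everything else is the routine reduction of \cite{NSW} 9.2.8. For $S$ of Dirichlet density $1$ (so that the hypothesis is automatic with stabilizing field $K$) the statement specializes to the classical Grunwald--Wang theorem.
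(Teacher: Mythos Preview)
Your proposal is correct and follows exactly the approach the paper intends: the paper's own proof consists of the single sentence ``The proof is the same as in \cite{NSW} 9.2.8,'' and what you have written is precisely a careful unpacking of that argument, with Theorem~\ref{thm:gruwi_cokerform} playing the role of \cite{NSW} 9.2.7. Your handling of the auxiliary primes (chosen in $S \sm T$ away from $2$ and $\infty$ so as not to disturb the special-case bookkeeping) and the coordinate-wise reduction to cyclic $p$-power coefficients are both exactly as in \cite{NSW} 9.2.8.
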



\begin{ex}(A set with persistent subset for which Grunwald-Wang stably fails)
Let $p$ be an odd prime and assume $\mu_p \subset K$ (in particular, $K$ is totally imaginary and we can ignore the infinite primes). Let $S$ be a set of primes of $K$. Let $V = S_p \sm S$ and let $T \supseteq V$ be a finite set of primes of $K$. By \cite{NSW} 9.2.2 we have for all $K_S/L/K$ a short exact sequence (recall that $\mu_p \cong \bZ/p\bZ$ by assumption):

\[ 0 \rar \Sha^1(K_{S \cup T}/L, S \cup T; \bZ/p\bZ) \rar \Sha^1(K_{S \cup T}/L, S \sm T; \bZ/p\bZ) \rar \coker^1(K_{S \cup T}/L, T; \bZ/p\bZ)^{\vee} \rar 0. \]

\noindent Assume now that $S$ is $p$-stable with $p$-stabilizing field $K$. Then
\[\Sha^1(K_{S \cup T}/L, S \cup T; \bZ/p\bZ) \subseteq \Sha^1(K_S/L, S; \bZ/p\bZ) = 0\]

\noindent and hence we have

\[ \coker^1(K_{S \cup T}/L, T; \bZ/p\bZ) \cong \Sha^1(K_{S \cup T}/L,S \sm T; \bZ/p\bZ)^{\vee}. \]

\noindent We can find such a set $S$ for which one has additionally $\Sha^1(K_{S \cup T}/L,S \sm T; \bZ/p\bZ) \neq 0$ for each $K_S/L/K$. For an explicit example, let $K = \bQ(\mu_p)$ and let $T \supseteq S_p(K)$ ($S_p(K)$ consists of exactly one prime) be a finite set of primes of $K$. Let $M/K$ be a Galois extension of degree $p$ with $\emptyset \neq \Ram(M/K) \subseteq T$ (e.g., $M = \bQ(\mu_{p^2})$). Let $S := \cs(M/K)$. Then $M \cap K_S = K$ and hence $ML \cap K_S = L$ for each $K_S/L/K$. Thus $S$ is persistent with persisting field $K$. Further, $ML/L$ is a Galois extension of degree $p$, which is completely split in $S \sm T$ and unramified outside $S \cup T$, hence the subgroup of $\Gal_{K_{S \cup T}/ML} \subsetneq \Gal_{K_{S \cup T}/L}$ is the kernel of a nontrivial homomorphism $0 \neq \phi_M \in \Sha^1(K_{S \cup T}/L,S \sm T; \bZ/p\bZ)$. Hence this group is non-trivial.

Thus we have: $S$ is persistent but not strongly $p$-stable, in particular, no $p$-stabilizing field for $S \backsimeq S \cup T$ for $K_{S \cup S_p \cup S_{\infty}}/K$ is contained in $K_S$ and Grunwald-Wang does not hold for $S \cup T \supseteq T$ (i.e., the cokernel in Theorem \ref{thm:gruwi_cokerform} is non-zero). It is still unclear, whether there is an example of sets $\tilde{S} \supseteq \tilde{T}$ such that $\tilde{S}$ is persistent but not strongly $p$-stable and Grunwald-Wang fails for $\tilde{S} \supseteq \tilde{T}$.
\end{ex}

Finally, we have two corollaries generalizing \cite{NSW} 9.2.4 and 9.2.9 to stable sets.

\begin{cor}\label{cor:GWT_for_induced_module}
Let $K$ be a number field, $T \subseteq S$ sets of primes of $K$ with $T$ finite. Let $K_S/L/K$ be a finite Galois subextension with Galois group $G$. Let $p$ be a prime and $A = \bF_p[G]^n$ a $\Gal_{K,S}$-module. Assume $S$ is $p$-stable for $K_{S \cup S_p \cup S_{\infty}}/K$ with $p$-stabilizing field $L$. Then the restriction map

\[ \coh^1(K_S/K, A) \rar \bigoplus_{\fp \in T} \coh^1(\cG_{\fp}, A) \]
\noindent is surjective.
\end{cor}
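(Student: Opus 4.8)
The plan is to reduce, via Shapiro's lemma, to the case of the constant coefficient module $\bZ/p\bZ$ over the base field $L$, and then to apply Theorem~\ref{thm:gruwi_cokerform}; this mirrors the proof of \cite{NSW} 9.2.4, with Theorem~\ref{thm:gruwi_cokerform} taking the place of \cite{NSW} 9.2.3(ii). First I would set $H := \Gal_{K_S/L}$, so that $G = \Gal_{K,S}/H$ and $A = \bigl(\Ind_H^{\Gal_{K,S}} \bF_p\bigr)^n$ with $\bF_p$ the trivial $H$-module. Shapiro's lemma then gives a canonical isomorphism $\coh^1(K_S/K, A) \cong \coh^1(K_S/L, \bF_p)^n$. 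Since $L \subseteq K_S$, the field $K_S$ is also the maximal extension of $L$ unramified outside $S$ (a prime of $K$ outside $S$ is unramified in $L/K$, and the primes of $L$ above it lie outside $S_L$), so the right-hand side is the expected global cohomology group.

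Next comes the local bookkeeping. Fix $\fp \in T$ and let $D_\fp \subseteq G$ be its decomposition group. Viewing $G$ as a $\cG_\fp$-set through $\cG_\fp \twoheadrightarrow D_\fp \hookrightarrow G$, it decomposes into free $D_\fp$-orbits indexed by the primes $\fP$ of $L$ above $\fp$, each isomorphic as a $\cG_\fp$-set to $\cG_\fp/\cG_\fP$, where $\cG_\fP$ is the absolute Galois group of $L_\fP$. Hence $A|_{\cG_\fp} \cong \bigoplus_{\fP \mid \fp}\bigl(\Ind_{\cG_\fP}^{\cG_\fp}\bF_p\bigr)^n$, and Shapiro's lemma yields
\[ \bigoplus_{\fp \in T} \coh^1(\cG_\fp, A) \;\cong\; \bigoplus_{\fP \in T_L} \coh^1(\cG_\fP, \bF_p)^n, \]
with $T_L$ finite. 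The one point that requires care — and where the routine work sits — is that these identifications are compatible with the restriction maps, so that under them the map of the corollary becomes the $n$-fold direct sum of
\[ \coh^1(K_S/L, \bF_p) \longrar \bigoplus_{\fP \in T_L} \coh^1(\cG_\fP, \bF_p). \]
This is exactly the functoriality of the Shapiro isomorphism under passage to subgroups, carried out just as in \cite{NSW} 9.2.4.

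It therefore remains to show that the last displayed map is surjective. I would apply Theorem~\ref{thm:gruwi_cokerform} with the set $S$, the finite subset $T_0 := T$, empty inertia set, exponent $r = 1$, and base level $L_0 := L$: by hypothesis $S = S \sm \emptyset$ is $p$-stable for $K_{S \cup S_p \cup S_\infty}/K$ with a $p$-stabilizing field $L$ contained in $K_S$, and since the exponent equals $p$ the special case $(L, p, S \sm T)$ cannot arise (for exponent $p$ one always has $\coh^1(L(\mu_p)/L, \mu_p) = 0$, cf.\ the proof of Corollary~\ref{cor:limHasseforstables}(ii)). Thus Theorem~\ref{thm:gruwi_cokerform} gives the required surjectivity unconditionally, and the corollary follows. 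The only non-formal ingredient is Theorem~\ref{thm:gruwi_cokerform} itself; the rest is standard induced-module bookkeeping, so I expect no real obstacle beyond keeping the local and global Shapiro identifications consistent with one another.
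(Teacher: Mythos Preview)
Your proposal is correct and follows essentially the same approach as the paper: both reduce via Shapiro's lemma (globally and locally) to the constant-coefficient map $\coh^1(K_S/L,\bF_p^n)\to\bigoplus_{\fP\in T(L)}\coh^1(\cG_\fP,\bF_p^n)$ and then invoke Theorem~\ref{thm:gruwi_cokerform}. The paper states this tersely as a commutative square with vertical Shapiro isomorphisms, while you spell out the local decomposition and the exclusion of the special case for $r=1$ more carefully; the arguments are the same in substance.
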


\begin{proof}[Proof (cf. \cite{NSW} 9.2.4).]
We have the commutative diagram, in which the vertical maps are Shapiro-isomorphisms:

\centerline{
\begin{xy}
\xymatrix{
\coh^1(K_S/K, A) \ar[d]^{\sim} \ar[r] & \bigoplus\limits_{\fp \in T} \coh^1(\cG_{\fp}, A) \ar[d]^{\sim} \\
\coh^1(K_S/L, \bF_p^n) \ar[r] & \bigoplus\limits_{\fP \in T(L)} \coh^1(\cG_{\fP}, \bF_p^n)
}
\end{xy}
}
\noindent The lower map is surjective by Theorem \ref{thm:gruwi_cokerform}, and so is the upper.
\end{proof}

\begin{cor}
Let $K$ be number field, $S$ a set of primes of $K$. Let $K_S/L/K$ be a finite Galois subextension with Galois group $G$. Let $p$ be a prime and $A = \bF_p[G]^n$ a $\Gal_{K,S}$-module. Assume that $S$ is $p$-stable for $K_{S \cup S_p \cup S_{\infty}}/L$ with $p$-stabilizing field $L$. Then the embedding problem

\centerline{
\begin{xy}
\xymatrix{
& & & \Gal_{K,S} \ar@{->>}[d]& \\
1 \ar[r] & A \ar[r] & E \ar[r] & G \ar[r] & 1
}
\end{xy}
}

\noindent is properly solvable.
\end{cor}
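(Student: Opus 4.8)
The plan is to imitate the proof of \cite{NSW} 9.2.9, with Corollary \ref{cor:GWT_for_induced_module} playing the role of \cite{NSW} 9.2.4. First observe that $A = \bF_p[G]^n$ is an induced $G$-module, hence cohomologically trivial; in particular $\coh^1(G,A) = \coh^2(G,A) = 0$, so the group extension $1 \to A \to E \to G \to 1$ splits, $E \cong A \rtimes G$, and the embedding problem admits at least the weak solution $\Gal_{K,S} \twoheadrightarrow G \hookrightarrow E$. Write $N := \Gal(K_S/L) = \ker(\Gal_{K,S} \twoheadrightarrow G)$. Relative to a fixed splitting, every solution has the shape $\gamma \mapsto (\delta(\gamma),\bar\gamma)$ for a continuous cocycle $\delta \in Z^1(\Gal_{K,S}, A)$, and a short direct computation shows that such a solution is surjective (hence proper) precisely when the restriction $\delta|_N \colon N \to A$ is surjective; note that $\delta|_N$ is an ordinary, $G$-equivariant homomorphism, since $N$ acts trivially on $A$. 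So it suffices to produce a cocycle $\delta$ with $\delta|_N$ surjective.

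Next I would translate this into a statement about $\coh^1$. Because $\coh^1(G,A) = \coh^2(G,A) = 0$, inflation--restriction gives an isomorphism $\coh^1(\Gal_{K,S}, A) \xrightarrow{\,\sim\,} \coh^1(N,A)^G = \Hom_G(N^{\ab}/p, A)$, the map being restriction to $N$ followed by ``this cocycle is a homomorphism'' (coboundaries on $N$ vanish, the action being trivial). Moreover $A = \bF_p[G]^n$ is a \emph{projective} $\bF_p[G]$-module, so by Nakayama a $G$-homomorphism $f \colon N^{\ab}/p \to A$ is surjective as soon as the composite $N^{\ab}/p \xrightarrow{f} A \to A/\mathrm{rad}\,A \cong (\bF_p[G]/\mathrm{rad}\,\bF_p[G])^n$ is surjective, i.e.\ as soon as the associated semisimple quotient hits every simple $\bF_p[G]$-module occurring in the head of $A$ with the full required multiplicity. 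Thus I must exhibit a class in $\coh^1(\Gal_{K,S}, A)$ whose associated $G$-homomorphism $N^{\ab}/p \to A$ has this surjectivity property.

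This is the point at which Corollary \ref{cor:GWT_for_induced_module} enters. The hypothesis here --- that $S$ be $p$-stable for $K_{S \cup S_p \cup S_\infty}/L$ with $p$-stabilizing field $L$ --- is literally the hypothesis of Corollary \ref{cor:GWT_for_induced_module} (the $p$-stabilizing field being $L$ in both formulations, so whether one writes the tower over $K$ or over $L$ is immaterial), so that corollary applies to $K$, the module $A$ and any finite set $T \subseteq S$: it makes the restriction $\coh^1(K_S/K, A) \to \bigoplus_{\fp \in T} \coh^1(\cG_{\fp}, A)$ surjective, which via Shapiro is the surjectivity of $\coh^1(K_S/L, \bF_p^n) \to \bigoplus_{\fP \in T(L)} \coh^1(\cG_{\fP}, \bF_p^n)$. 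Choosing $T$ large, with each prime of $T$ having a prescribed Frobenius in $L/K$, and prescribing the local components at these primes suitably, one builds a global class in $\coh^1(K_S/K, A)$ whose restriction to $N$ is the sought-after surjection; this is carried out verbatim as in \cite{NSW} 9.2.9, and the resulting cocycle $\delta$ gives a proper solution $\gamma \mapsto (\delta(\gamma), \bar\gamma)$.

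The only real obstacle is this last bookkeeping step: guaranteeing that the global class assembled from local data restricts to an \emph{onto} homomorphism $N^{\ab}/p \twoheadrightarrow \bF_p[G]^n$, and not merely to a nonzero one. Since $T$ may be taken arbitrarily large and $A$ is free over $\bF_p[G]$, this is exactly the situation treated in \cite{NSW} 9.2.9, and the argument there applies once Corollary \ref{cor:GWT_for_induced_module} is substituted for \cite{NSW} 9.2.4; so the substantive work is only to check that the stability input matches, which it does.
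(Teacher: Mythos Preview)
Your proposal is correct and follows exactly the route the paper takes: both argue that the result follows from Corollary \ref{cor:GWT_for_induced_module} in the same way that \cite{NSW} 9.2.9 follows from \cite{NSW} 9.2.4, using the splitting of $E$ afforded by $\coh^2(G,A)=0$ and then twisting a weak solution by a global cocycle matching suitably chosen local data. Your Nakayama/semisimple-head discussion is a correct but optional elaboration; the paper's (and \cite{NSW}'s) more direct version simply picks finitely many primes $\fp_1,\dots,\fp_r \in \cs(L/K)\cap S$ and local homomorphisms $\phi_i \colon \cG_{\fp_i} \to A$ whose images already generate $A$ as an abelian group, so that the twisted solution visibly hits all of $A$.
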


\begin{proof}
It follows from Corollary \ref{cor:GWT_for_induced_module} in the same way as \cite{NSW} 9.2.9 follows from \cite{NSW} 9.2.4.
\end{proof}


\subsection{Grunwald-Wang cokernel in the limit and sharp $p$-stability} \label{sec:GWT_and_dagger}


If one is interested (motivated by Theorem \ref{thm:LocExt_RExt_CD}, we are) in the vanishing of the direct limit over $K_S/L/K$ of the Grunwald-Wang cokernel, rather than in the vanishing of the cokernel for each $L$, one can use sharp $p$-stability instead of strong $p$-stability, which is considerably weaker.

\begin{thm}\label{thm:dirlim_GW_coker_vanishes} Let $K$ be a number field, $S$ a set of primes of $K$ and $\cL \subseteq K_S$ a subextension normal over $K$, such that $S$ is sharply $p$-stable for $\cL/K$. Let $T$ be a finite set of primes of $K$ containing $(S_p \cup S_{\infty}) \sm S$. If $p^{\infty} | [\cL:K]$, then
\[ \dirlim_{\cL/L/K, \res} \coker^1(K_{S \cup T}/L, T, \bZ/p\bZ) = 0. \]
\end{thm}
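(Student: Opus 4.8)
The plan is to reduce, via Poitou--Tate duality, to a statement about Shafarevich groups with $\mu_p$-coefficients, and then to kill those in the limit using $(\dagger)_p^{\rm rel}$ together with the hypothesis $p^{\infty}\mid[\cL:K]$. First I would record the elementary consequences of the hypotheses: since $T\supseteq(S_p\cup S_{\infty})\sm S$ we have $S_p\cup S_{\infty}\subseteq S\cup T$, so $p\in\caO_{K,S\cup T}^{\ast}$ and $\mu_p\subseteq K_{S\cup T}$, and in particular Poitou--Tate duality applies to $K_{S\cup T}/L$ for every finite $\cL/L/K$. By exactness of $\dirlim$ and continuity of Galois cohomology,
\[ \dirlim_{\cL/L/K}\coker^1(K_{S\cup T}/L,T;\bZ/p\bZ)=\coker\Bigl(\coh^1(\Gal_{K_{S\cup T}/\cL},\bZ/p\bZ)\longrar\bigoplus_{\tilde\fp\in T(\cL)}\coh^1(\Gal_{\cL_{\tilde\fp}},\bZ/p\bZ)\Bigr), \]
so the claim is exactly the surjectivity of this localisation map. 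By \cite{NSW} 9.2.2 there is, at each finite layer $L$, an exact sequence
\[ 0\rar\Sha^1(K_{S\cup T}/L,S\cup T;\mu_p)\rar\Sha^1(K_{S\cup T}/L,S\sm T;\mu_p)\rar\coker^1(K_{S\cup T}/L,T;\bZ/p\bZ)^{\vee}\rar0 ; \]
passing to the inverse limit over $\cL/L/K$ (with corestriction maps) and using that $\prolim^{1}$ of an inverse system of finite groups vanishes, one sees that $\dirlim_L\coker^1(K_{S\cup T}/L,T;\bZ/p\bZ)$ embeds into $\bigl(\prolim_L\Sha^1(K_{S\cup T}/L,S\sm T;\mu_p)\bigr)^{\vee}$. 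Hence it suffices to prove $\prolim_L\Sha^1(K_{S\cup T}/L,S\sm T;\mu_p)=0$.

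Next I would simplify these Shafarevich groups. Applying Lemma \ref{lm:shortShaseq} to $K_{S\cup T}/L(\mu_p)/L$ --- whose hypothesis is automatic here, since $\Gal_{K_{S\cup T}/L(\mu_p)}$ acts trivially on $\mu_p$ by definition of the trivialising extension --- together with the vanishing $\coh^1(\Gal_{L(\mu_p)/L},\mu_p)=0$ (the order of $\Gal_{L(\mu_p)/L}$ divides $p-1$, while $\mu_p$ is $p$-torsion), gives, compatibly with the transition maps, an injection
\[ \Sha^1(K_{S\cup T}/L,S\sm T;\mu_p)\hookrightarrow\Sha^1(K_{S\cup T}/L(\mu_p),S\sm T;\bZ/p\bZ)=\Hom\bigl(\Gal_{K_{S\cup T}/L(\mu_p)}^{\,S\sm T},\bZ/p\bZ\bigr), \]
where $\Gal_{(-)}^{\,S\sm T}$ denotes the Galois group of the maximal subextension completely split in $S\sm T$. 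Since $(S,\cL)$ is $(\dagger)_p^{\rm rel}$, $S$ is stable for $\cL(\mu_p)/K$, so by Proposition \ref{prop:stableequalbounded} there is $\epsilon>0$ with $\delta_L(S)>\epsilon$ for all $\cL(\mu_p)/L/K$; a Galois extension completely split in the density-$>\epsilon$ set $S\sm T$ has degree $<\epsilon^{-1}$, so the right-hand group above (equivalently, by Corollary \ref{prop:uniformbound_forSha1_constcoeff}, the left-hand group) is finite of order bounded independently of $L$. Thus all the groups in play are of exponent $p$ and of uniformly bounded order.

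The heart of the matter is then to show $\prolim_L\Sha^1(K_{S\cup T}/L,S\sm T;\mu_p)=0$ --- equivalently, that the inverse limit of the $\Hom$-groups above vanishes --- and this is where $p^{\infty}\mid[\cL:K]$ enters. For any finite $\cL/L/K$ one has $p^{\infty}\mid[\cL:L]$, hence $v_p\bigl([L'(\mu_p):L(\mu_p)]\bigr)=v_p([L':L])$ can be made arbitrarily large by a suitable choice of $\cL/L'/L$. Combining this with the norm relation $\cor_{L'/L}\circ\res_{L'/L}=[L'(\mu_p):L(\mu_p)]$, which is the zero endomorphism of the $p$-torsion group $\Sha^1(K_{S\cup T}/L,S\sm T;\mu_p)$ once $p\mid[L'(\mu_p):L(\mu_p)]$, and with a place-by-place analysis at the finitely many primes over $T$ (where high $p$-divisibility of the local degrees absorbs the unramified $\bZ/p\bZ$-conditions, while the ramified ones at $T\sm S$ are harmless because $T\subseteq S\cup T$), one shows that the corestriction maps in the tower are eventually zero. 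The density input is used as in the proof of Theorem \ref{thm:keyresult}: a $\bZ/p\bZ$-extension completely split in $S\sm T$ realised at a sufficiently large finite layer $L'$ of $\cL(\mu_p)$ would produce a number field of $p$-power degree completely split in a set of density $>\epsilon$, contradicting $\delta\leq1$ once the degree exceeds $\epsilon^{-1}$ --- here one must first descend the ``completely split'' condition from $\cL(\mu_p)$ to a finite layer, which can be arranged by enlarging the layer. This gives $\prolim_L\Sha^1=0$, and hence the theorem.

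I expect the decisive obstacle to be precisely this last step. Unlike under the stronger hypothesis $(\ast)_p$ (Section \ref{sec:GWT}), the Grunwald--Wang cokernel need not vanish at any individual finite layer, so the argument can be neither purely local nor confined to one level; one genuinely has to trade the $p$-divisibility of the degrees in the tower against the density lower bound coming from $(\dagger)_p^{\rm rel}$, turning ``the groups are $p$-torsion, uniformly bounded, and all the degrees are highly $p$-divisible'' into ``the transition maps are eventually zero''. A secondary delicate point is the descent of the ``completely split in $S\sm T$'' condition from the infinite field $\cL(\mu_p)$ to a finite subfield, which needs care since $\cL/K$ may genuinely ramify at primes of $S$.
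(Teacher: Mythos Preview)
Your reduction is correct and matches the paper: by \cite{NSW} 9.2.2 one lands on the problem of showing that the inverse system $\bigl(\Sha^1(K_{S\cup T}/L,S\sm T;\mu_p)\bigr)_L$ with corestriction transition maps has trivial limit, and the uniform bound you extract from $(\dagger)_p^{\rm rel}$ via Corollary \ref{prop:uniformbound_forSha1_constcoeff} is exactly what is needed. The relation $\cores\circ\res=[L':L]$ (not $[L'(\mu_p):L(\mu_p)]$, though the $p$-adic valuations agree) is also the right tool.

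The gap is in the last step, and it is precisely the one you flag. From $\cores\circ\res=0$ you cannot conclude $\cores=0$ unless you know that $\res$ is \emph{surjective}; your ``place-by-place analysis at $T$'' and ``descent of the completely-split condition from $\cL(\mu_p)$'' do not supply this, and in fact neither ingredient appears in the actual argument. What the paper does instead is prove a short lemma: there is a finite level $L_1\subseteq\cL$ beyond which $\res_L^{L'}$ is an \emph{isomorphism} for all $\cL/L'/L/L_1$. Once you have this, $\cores\circ\res=0$ gives $\cores=0$ immediately, and the limit vanishes.

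The lemma is proved in two halves. First one shows $\res_L^{L'}$ is injective for $L$ large. If $\mu_p\subseteq\cL$, a nonzero $\phi\in\ker\res_L^{L'}$ (after trivialising $\mu_p$) cuts out a $\bZ/p\bZ$-extension $M/L$ completely split in $S\sm T$ with $M\subseteq L'\subseteq\cL$, and $\delta_M(S)=p\,\delta_L(S)$ contradicts $p$-stability of $S$ for $\cL/K$ --- this is your density argument, but applied to kill $\ker\res$, not to kill the groups themselves (they need not vanish at any finite level). If $\mu_p\not\subseteq\cL$, one argues directly with Kummer theory: a nonzero $x\in\ker\res_L^{L'}\subseteq L^\ast/L^{\ast p}$ has a $p$-th root $y\in L'\subseteq\cL$, and since $\cL/K$ is \emph{normal} the Galois conjugates $\zeta^i y$ of $y$ all lie in $\cL$, forcing $\mu_p\subseteq\cL$, contradiction. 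You do not treat this second case at all, and it is where the normality hypothesis on $\cL/K$ is used. Second, injectivity together with your uniform bound forces $\res$ to be eventually bijective (a tower of injections among finite groups of bounded order stabilises).
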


\begin{proof}
For any finite subextension $\cL/L/K$ we have the short exact sequence

\[ 0 \rar \Sha^1(K_{S \cup T}/L, S \cup T; \mu_p) \rar \Sha^1(K_{S \cup T}/L, S \sm T; \mu_p) \rar \coker^1(K_{S \cup T}/L, T; \bZ/p\bZ)^{\vee} \rar 0. \]

\noindent Dualizing it, we see that it is enough to show that $\dirlim_{\cL/L/K, \cores^{\vee}} \Sha^1(K_{S \cup T}/L, S \sm T; \mu_p)^{\vee} = 0$. For any two finite subextensions $\cL/L^{\prime}/L/K$ we have the maps:
\begin{equation}\label{eq:resandcor}
\res_L^{L^{\prime}} \colon \Sha^1(K_{S \cup T}/L, S \sm T; \mu_p) \leftrightarrows \Sha^1(K_{S \cup T}/L^{\prime}, S \sm T; \mu_p) \colon \cores_L^{L^{\prime}}
\end{equation}

\begin{lm}\label{lm:resisbij}
There is a finite subextension $\cL/L_1/K$, such that for all $\cL/L^{\prime}/L/L_1$, the map $\res_L^{L^{\prime}}$ is an isomorphism.
\end{lm}
\begin{proof}
First we claim that $\res_L^{L^{\prime}}$ is injective if $L$ is big enough. Assume first that $\mu_p \subseteq \cL$ and $S$ is $p$-stable for $\cL/K$. Let $\cL/L_0/K$ be a finite subextension which $p$-stabilizes $S$ and contains $\mu_p$. Then any finite subextension $\cL/L/L_0$ satisfies the same. Assume $\res_L^{L^{\prime}}$ is not injective, i.e., there is some $0 \neq \phi \in \Sha^1(K_{S \cup T}/L, S \sm T; \bZ/p\bZ)$ with $\res_L^{L^{\prime}}(\phi) = 0$ (we have chosen some trivialization of $\mu_p$). This $\phi$ can be seen as a homomorphism $\phi \colon \Gal_{K_{S \cup T}/L} \rar \bZ/p\bZ$ which is trivial on all decomposition subgroups of primes in $S \sm T$. Let $M := (K_{S \cup T})^{\ker \phi}$. This is a finite Galois extension of $L$ with Galois group $\bZ/p\bZ$ and $\cs(M/L) \supseteq S \sm T$. But then
\[ \delta_M(S) = [M:L]\delta_L(S \cap \cs(M/L)) = p \delta_L(S), \]
\noindent since $T$ is finite. Now $\res_L^{L^{\prime}}(\phi) = 0$ implies $M \subseteq L^{\prime} \subseteq \cL$ and hence we get a contradiction to $p$-stability of $S$.

Now assume that $\mu_p \not\subseteq K_S$. Then $\res_L^{L^{\prime}}$ is always injective. Indeed, suppose there is an
\[ 0 \neq x \in \Sha^1(K_{S \cup T}/L, S \sm T; \mu_p) = \{ x \in L^{\ast}/p \colon x \in U_{\fp}L_{\fp}^{\ast,p} \text{ for } \fp \not\in S \cup T \text{ and } x \in L_{\fp}^{\ast,p} \text{ for } \fp \in S \sm T\} \]
\noindent with $\res_L^{L^{\prime}}(x) = 0$. This implies $x \in L^{\prime, \ast, p}$. Let $y^p = x$ with $y \in L^{\prime}$. Then $L(y) \subseteq L^{\prime} \subseteq \cL$. Since the polynomial $T^p - x$ is irreducible over $L$ (since $x \not\in L^{\ast, p}$), the conjugates of $y$ over $L$ are precisely the roots of this polynomial, which are obviously $\{ \zeta^i y \}_{i=0}^{p-1}$ for $\zeta \in \mu_p(\overline{K}) \sm \{1\}$. Since $\cL$ is normal over $L$, these conjugates lie in $\cL$. In particular, we deduce that $\zeta \in \cL$, which contradicts $\mu_p \not\subseteq \cL$. This finishes the proof of the injectivity claim.

By Corollary \ref{prop:uniformbound_forSha1_constcoeff}(ii), there is a constant $C > 0$ such that $\abs{\Sha^1(K_{S \cup T}/L, S \sm T, \mu_p)} < C$ for all $\cL/L/K$. Together with the injectivity shown above, this shows that there is a finite subextension $\cL/L_1/K$ such that for all $\cL/L^{\prime}/L/L_1$, the map $\res_L^{L^{\prime}}$ is bijective.
\end{proof}

Now we can finish the proof of Theorem \ref{thm:dirlim_GW_coker_vanishes}. Assume $L_1$ is as in Lemma \ref{lm:resisbij}. Let $\cL/L/L_1$. Since $p^{\infty}|[\cL:K]$, there is a further extension $\cL/L^{\prime}/L$ such that $p$ divides $[L^{\prime}:L]$. In the situation of \eqref{eq:resandcor} we have $\cores \circ \res = [L^{\prime}:L] = 0$ since $\mu_p$ is $p$-torsion. Dualizing gives $\res^{\vee} \circ \cores^{\vee} = (\cores \circ \res)^{\vee} = 0$. But with $\res$ also $\res^{\vee}$ is an isomorphism, hence we obtain $\cores^{\vee} = 0$. This shows
\[ \dirlim_{\cL/L/K, \cores^{\vee}} \Sha^1(K_{S \cup T}/L, S \sm T; \mu_p)^{\vee} = 0. \qedhere \]
\end{proof}

We also have the same arguments for $\Sha^2$.

\begin{prop} \label{prop:lim_over_Sha2_vanishes}
Let $K$ be a number field, $S$ a set of primes of $K$ and $\cL \subseteq K_S$ a subextension normal over $K$, such that $S$ is sharply $p$-stable for $\cL/K$. Let $T \supseteq S \cup S_p \cup S_{\infty}$ be a further set of primes. If $p^{\infty} | [\cL:K]$, then
\[ \dirlim_{\cL/L/K, \res} \Sha^2(K_T/L,T;\bZ/p\bZ) = 0. \]
\end{prop}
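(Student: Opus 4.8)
The plan is to mirror the proof of Theorem~\ref{thm:dirlim_GW_coker_vanishes}, the only new ingredient being a Poitou--Tate step that converts $\Sha^2$ with coefficients $\bZ/p\bZ$ into $\Sha^1$ with coefficients $\mu_p$. Since $T \supseteq S_p \cup S_\infty$, the order of $\bZ/p\bZ$ lies in $\bN(T)$, the dual module is $\Hom(\bZ/p\bZ, \caO_{K_T,T}^{\ast}) = \mu_p$, and $\cL \subseteq K_S \subseteq K_T$, so for every finite $\cL/L/K$ the group $\Gal_{K_T/L}$ is the Galois group of the maximal extension of $L$ unramified outside $T$. First I would invoke Poitou--Tate duality \cite{NSW} 8.6.7, which gives, functorially in $L$, an isomorphism
\[ \Sha^2(K_T/L, T; \bZ/p\bZ) \cong \Sha^1(K_T/L, T; \mu_p)^{\vee}, \]
under which the restriction maps on the left become the Pontrjagin duals of the corestriction maps on the right. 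It therefore suffices to prove $\dirlim_{\cL/L/K,\, \cores^{\vee}} \Sha^1(K_T/L, T; \mu_p)^{\vee} = 0$.

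Next I would establish the analogue of Lemma~\ref{lm:resisbij}: there is a finite subextension $\cL/L_1/K$ such that $\res_L^{L'} \colon \Sha^1(K_T/L, T; \mu_p) \to \Sha^1(K_T/L', T; \mu_p)$ is an isomorphism for all $\cL/L'/L/L_1$. The two cases of $(\dagger)_p^{\rm rel}$ are handled exactly as there. If $\mu_p \subseteq \cL$, then $S$ is $p$-stable for $\cL/K$; fixing a $p$-stabilizing field $L_0$ with subset $S_0 \subseteq S$ and $a>0$, a nonzero class $\phi$ over some $\cL/L/L_0$ with $\res_L^{L'}(\phi)=0$ cuts out a degree-$p$ Galois extension $M = K_T^{\ker\phi} \subseteq L' \subseteq \cL$ which is completely split in $T$, hence in $S_0 \subseteq S \subseteq T$, so $\delta_M(S_0) = [M:L]\,\delta_L(S_0) = p\,\delta_L(S_0) \geq pa$, contradicting $p$-stability of $S_0$ along $\cL/L_0$ since $L_0 \subseteq M \subseteq \cL$. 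If $\mu_p \not\subseteq \cL$, injectivity of every $\res_L^{L'}$ follows from the Kummer argument of Lemma~\ref{lm:resisbij}: a global $p$-th root lying in $L' \subseteq \cL$ of a nontrivial class would force $\mu_p \subseteq \cL$. In both cases $S$ — hence also $T$, by Lemma~\ref{prop:firstpropsofstabletrivs}(iii) and $S \subseteq T$ — is stable for $\cL(\mu_p)/K$, so Corollary~\ref{prop:uniformbound_forSha1_constcoeff}(ii) (applied with the set $T$ there empty and the set $S$ there taken to be our $T$) provides a constant $C$ with $\abs{\Sha^1(K_T/L, T; \mu_p)} < C$ for all $\cL/L/K$. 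As at the end of the proof of Lemma~\ref{lm:resisbij}, injectivity together with this uniform bound yields the desired $L_1$ (take it to realize the maximal cardinality).

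Finally, for any $\cL/L/L_1$, since $p^{\infty} \mid [\cL:K]$ there is a finite $\cL/L'/L$ with $p \mid [L':L]$, whence $\cores_L^{L'} \circ \res_L^{L'} = [L':L]\cdot\id = 0$ on $\Sha^1(K_T/L, T; \mu_p)$ because $\mu_p$ is $p$-torsion. Dualizing gives $(\res_L^{L'})^{\vee} \circ (\cores_L^{L'})^{\vee} = 0$, and since $(\res_L^{L'})^{\vee}$ is an isomorphism this forces $(\cores_L^{L'})^{\vee} = 0$. Thus in the directed system $(\Sha^1(K_T/L, T; \mu_p)^{\vee},\, (\cores)^{\vee})$ every object past $L_1$ maps eventually to zero, so the colimit vanishes, which proves the proposition.

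I expect the main obstacle to be the injectivity claim in the $\mu_p \subseteq \cL$ case together with correctly feeding parameters into the uniform bound of Corollary~\ref{prop:uniformbound_forSha1_constcoeff}; imposing the local conditions at all of $T$ (rather than at $S \sm T$ as in Theorem~\ref{thm:dirlim_GW_coker_vanishes}) only strengthens the relevant splitting hypothesis, so no genuinely new difficulty arises there. The remaining point to check is that Poitou--Tate applies to the pair $(K_T/L, T)$ — i.e. $T \supseteq S_p(L) \cup S_\infty(L)$ and $p \in \caO_{L,T}^{\ast}$ — which is immediate from $T \supseteq S \cup S_p \cup S_\infty$.
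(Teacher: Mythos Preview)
Your proposal is correct and follows exactly the approach indicated in the paper: the paper's proof is a two-line sketch that invokes Poitou--Tate duality to reduce to $\dirlim_{\cL/L/K,\cores^\vee}\Sha^1(K_T/L,T;\mu_p)^\vee=0$ and then says ``this follows in the same way as in the proof of Theorem~\ref{thm:dirlim_GW_coker_vanishes}'', and you have carefully spelled out those details (with the correct adaptation that the local conditions are now imposed at all of $T$, and the correct instantiation of Corollary~\ref{prop:uniformbound_forSha1_constcoeff}(ii)). One small point: in the $\mu_p\subseteq\cL$ case you should make explicit that $L_0$ is chosen to contain $\mu_p$ (as in the proof of Lemma~\ref{lm:resisbij}) so that $\phi$ is literally a homomorphism to $\bZ/p\bZ$, but this is implicit in your reference to that lemma.
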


\begin{proof}
By Poitou-Tate duality this is equivalent to
\[ \dirlim_{\cL/L/K, \cores^{\vee}} \Sha^1(K_T/L, T; \mu_p)^{\vee} = 0. \]
\noindent This follows in the same way as in the proof of Theorem \ref{thm:dirlim_GW_coker_vanishes}.
\end{proof}


\subsection{Consequences}\label{sec:proof_of_dagger_results}

Here we prove Theorem \ref{thm:LocExt_RExt_CD}.

\begin{lm}\label{lm:pinfty_divides_ord_of_GalKS}
Let $S \supseteq R$ be sets of primes of $K$. Assume $R$ is finite and $S \cap \cs(K(\mu_p)/K)$ is infinite. Then $p^{\infty} | [K_{S}^{R}(p):K]$.
\end{lm}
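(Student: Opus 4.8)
The plan is to prove the equivalent statement that $\Gal_{K,S}^R(p)$ is infinite, by producing for every $N \geq 1$ a finite Galois subextension $M/K$ of $K_S^R/K$ with $\Gal_{M/K} \cong (\bZ/p\bZ)^N$. Since $K_S^R(p)/K$ then has finite subextensions of arbitrarily large degree and $\Gal_{K,S}^R(p)$ is pro-$p$, this yields $p^{\infty} \mid [K_S^R(p):K]$. This has to be done directly from class field theory, since the Grunwald--Wang type results of the later sections rest on the present lemma and are therefore not available.

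First I would discard the primes above $p$ and those in $R$: as $R$ is finite and only finitely many primes lie over $p$, the set $S \cap \cs(K(\mu_p)/K)$ still contains infinitely many primes $\fp$ with $\fp \nmid p$ and $\fp \notin R$. For such a $\fp$, complete splitting in $K(\mu_p)/K$ gives $\mu_p \subseteq K_{\fp}$, and since $\fp \nmid p$ the reduction map $\mu_p(K_{\fp}) \to \kappa(\fp)^{\ast}$ is injective (its kernel lies in the pro-$\ell$ group $1+\fp$, where $\ell \neq p$ is the residue characteristic), so $\kappa(\fp)^{\ast}$ has order divisible by $p$. Now I would fix $r := N + \abs{R} + c$ such primes $\fp_1,\dots,\fp_r$, where $c := r_1 + r_2$ with $r_1, r_2$ the numbers of real and complex places of $K$, and put $\fm := \fp_1\cdots\fp_r$. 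Let $H_{\fm}/K$ be the ray class field of modulus $\fm$ (with no archimedean part), so that $\Gal_{H_{\fm}/K} \cong \Cl_K(\fm)$, the extension $H_{\fm}/K$ is unramified outside $\fm$, and it is unramified at every archimedean place. From the exact sequence of class field theory
\[ \caO_K^{\ast} \longrightarrow (\caO_K/\fm)^{\ast} \longrightarrow \Cl_K(\fm) \longrightarrow \Cl_K \longrightarrow 0 \]
together with the isomorphism $(\caO_K/\fm)^{\ast} \cong \prod_{i=1}^{r}\kappa(\fp_i)^{\ast}$ (Chinese remainder theorem): the source $(\caO_K/\fm)^{\ast}$ has $p$-rank $r$ by the previous paragraph, while $\im(\caO_K^{\ast})$ is generated by at most $c$ elements, so the subgroup $(\caO_K/\fm)^{\ast}/\im(\caO_K^{\ast})$ of $\Cl_K(\fm)$, and hence $\Cl_K(\fm)$ itself, has $p$-rank at least $r - c$.

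It remains to impose complete splitting in $R$. Each non-archimedean $\fq \in R$ is unramified in $H_{\fm}$ (as $\fq \notin \{\fp_1,\dots,\fp_r\}$), so its Frobenius $\Frob_{\fq} \in \Cl_K(\fm)$ is defined; let $U \subseteq \Cl_K(\fm)$ be the subgroup generated by these (at most $\abs{R}$) elements. Then $H_{\fm}^U$ is unramified outside $\fm \subseteq S$, unramified at all archimedean places, and split completely at every prime of $R$ (archimedean primes of $R$ impose no condition), so $H_{\fm}^U \subseteq K_S^R$; moreover $\Cl_K(\fm)/U$ has $p$-rank at least $r - c - \abs{R} = N$, hence admits a quotient $(\bZ/p\bZ)^N$. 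The corresponding subfield $M$ satisfies $M \subseteq H_{\fm}^U \subseteq K_S^R$ and $\Gal_{M/K} \cong (\bZ/p\bZ)^N$, so $M \subseteq K_S^R(p)$ and $p^N \mid [K_S^R(p):K]$; letting $N \to \infty$ concludes the argument. The argument is essentially routine bookkeeping in class field theory; the only points that need care are that the correction terms $c$ and $\abs{R}$ are bounded independently of $r$ — so that enlarging $r$ genuinely forces the $p$-rank up — and the archimedean bookkeeping which makes $H_{\fm}$, and hence $M$, automatically unramified at infinity irrespective of whether $S \supseteq S_{\infty}$, so that no exceptional ($p=2$) case arises.
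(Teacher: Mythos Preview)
Your proof is correct, and it takes a genuinely different route from the paper's. The paper simply invokes \cite{NSW} 10.7.7, which gives an explicit formula for $\dim_{\bF_p}\coh^1(\Gal_{K,S_C}^R(p),\bZ/p\bZ)$ in terms of local data; by choosing $S_C \subseteq S \cap \cs(K(\mu_p)/K)$ large enough (and containing $R$), this dimension exceeds any given bound, and since $\Gal_{K,S_C}^R(p)$ is a quotient of $\Gal_{K,S}^R(p)$ the result follows in one line. You instead unpack the class field theory directly: by building ray class fields with modulus supported on many primes of $S \cap \cs(K(\mu_p)/K)$ (away from $p$ and $R$), estimating the $p$-rank of the ray class group via the standard exact sequence and Dirichlet's unit theorem, and then killing the Frobenii at $R$. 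In effect you are reproving, by hand, the special case of the NSW generator count that is needed here. Your argument is longer but completely self-contained and makes the dependence on $c = r_1 + r_2$ and $\abs{R}$ transparent; the paper's argument is shorter but hides everything inside the cited formula. Your care about the archimedean places (taking $\fm$ without archimedean part, so that $H_{\fm}$ is unramified at infinity regardless of whether $S_\infty \subseteq S$) is a nice touch that the citation of \cite{NSW} 10.7.7 handles implicitly.
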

\begin{proof} By \cite{NSW} 10.7.7, for any $C > 0$ there is some finite subset $S_C \subseteq S \cap \cs(K(\mu_p)/K)$ such that $R \subseteq S_C$ and
\[ \dim_{\bF_p} \coh^1(\Gal_{K,S_C}^{R}(p),\bZ/p\bZ) > C. \]
Since each group $\Gal_{K,S_C}^{R}(p)$ is a quotient of $\Gal_{K,S}^{R}(p)$, the lemma follows.
\end{proof}

\begin{proof}[Proof of Theorem \ref{thm:LocExt_RExt_CD}]
($A_p$): Let $\fp$ be a prime of $K$, which is not contained in $R$. Since the local group $\cG_{\fp}(p)$ is solvable and the assumptions carry over to extensions of $K$ in $K_S^R(p)$, it is enough to show that any class $\alpha_{\fp} \in \coh^1(\cG_{\fp}(p),\bZ/p\bZ)$ (which has to be unramified if $\fp \not\in S$) is realized by a global class after a finite extension. Let $T := \{ \fp \} \cup R \cup S_p \cup S_{\infty}$ and let $(\alpha_{\fq}) \in \prod_{\fq \in T} \coh^1(\cG_{\fp}(p), \bZ/p\bZ)$, such that $\alpha_{\fq}$ is unramified if $\fq \not\in S$ and $0$ if $\fp \in R$. By Theorem \ref{thm:dirlim_GW_coker_vanishes}, there is some finite extension $K_S^R(p)/L/K$, such that $(\alpha_{\fq})$ comes from a global class $\alpha \in \coh^1(\Gal_{L,S \cup T}^R(p), \bZ/p\bZ)$. The $\bZ/p\bZ$-extension of $L$ corresponding to $\alpha$ is unramified outside $S$, completely split in $R$ and hence contained in $K_S^R(p)$, which finishes the proof. (A) has analogous proof.

($B_p$): The proof essentially coincides with the proofs of \cite{NSW} 10.5.8 resp. \cite{Iv} Theorem 4.26. As done there, we can restrict ourselves to the case $T \supseteq S_p \cup S_{\infty}$. All cohomology groups in the proof have $\bZ/p\bZ$-coefficients and we omit them from the notation. After computing the cohomology on the left side, by \cite{NSW} 1.6.15 we have to show that the map
\[ \coh^i(\phi_{T,S}^R(p)) \colon \coh^i( K_T(p)/K_S^R(p)) \rar \Resdirsum_{\fp \in R(K_S^R(p))} \coh^i(\cG_{\fp}(p)) \oplus \Resdirsum_{\fp \in (T \sm S)(K_S^R(p))} \coh^i(I_{\fp}(p)) \]

\noindent induced by $\phi_{T,S}^R(p)$ in the cohomology is bijective for $i = 1$ and injective for $i = 2$ (here $\Resdirsum$ means the restricted direct sum in the sense of \cite{NSW} 4.3.13). Now, $\coh^1(\phi_{T,S}^R(p))$ is injective since $\phi_{T,S}^R(p)$ is clearly surjective. To show surjectivity for $i = 1$, consider for any finite subset $T_1 \subseteq T \sm S$, which contains $(S_p \cup S_{\infty}) \sm S$, and any finite $K_S^R(p)/L/K$ the composed maps:

\[ \coh^1(K_{S \cup T_1}(p)/L) \rar \bigoplus_{\fp \in (R \cup T_1)(L)} \coh^1(\cG_{\fp}) \tar \bigoplus_{\fp \in R(L)} \coh^1(\cG_{\fp}) \oplus \bigoplus_{\fp \in T_1(L)} \coh^1(\cI_{\fp})^{\cG_{\fp}}, \]

\noindent where $\cI_{\fp} = I_{\overline{K_{\fp}}/L_{\fp}} \subseteq  \Gal_{\overline{K_{\fp}}/L_{\fp}} = \cG_{\fp}$ is the inertia subgroup. Passing to the direct limit over $K_S^R(p)/L/K$, we obtain by Theorem \ref{thm:dirlim_GW_coker_vanishes} the surjection
\[ \coh^1(K_{S \cup T_1}(p)/K_S^R(p)) \tar \Resdirsum_{\fp \in R(K_S^R(p))} \coh^1(\cG_{\fp}(p)) \oplus \Resdirsum_{\fp \in T_1(K_S^R(p))} \coh^1(I_{\overline{K_{\fp}}/K_{\fp}})^{ \Gal_{\overline{K_{\fp}}/K_{S,\fp}^R(p) } }, \]

\noindent which is, after passing to the direct limit over all finite $T_1 \subseteq T \sm S$, exactly $\coh^1(\phi_{T,S}^R(p))$, since by $(A_p)$ we have $K_S^R(p)_{\fp} = K_{\fp}^{\nr}(p)$ for $\fp \in T \sm S$ and hence $\coh^1(I_{\overline{K_{\fp}}/K_{\fp}})^{ \Gal_{\overline{K_{\fp}}/K_{S,\fp}^R(p) } } = \coh^1(I_{\fp}(p))$ (cf. the proofs of \cite{NSW} 10.5.8 resp. \cite{Iv} 4.26). Finally, the  injectivity of $\coh^2(\phi_{T,S}^R(p))$ follows by passing to the limit and using Proposition \ref{prop:lim_over_Sha2_vanishes}.
(B): By Lemma \ref{lm:dagger_rel_p_is_nice}, there is some $K_S^R/L_0/K$, such that for all $K_S^R/L/L_0$, $S$ is sharply $p$-stable for $L_S^R(p)/L$. Thus (B) follows from ($B_p$) as we have
\[ I_{\fp}^{\prime}(p) = \prolim_{K_S^R/L/K} I_{L_{\fp}(p)/L_{\fp}} \]
\noindent and
\[ \Gal_{K^{\prime}_T (p)/K_S^R} = \prolim_{K_S^R/L/K} \Gal_{L_T(p)/L_S^R(p)}. \]

($C_p$),(C): The proof essentially coincides with the proofs of \cite{NSW} 10.5.10, 10.5.11 resp. \cite{Iv} Theorem 4.31, Corollary 4.33. To avoid many repetitions, we only recall the argument for $\cd \Gal_{K,S}^R(p) \leq 2$ in the case $R = \emptyset$ (which differs in one aspect from the cited proofs). Therefore, let $V = (S_p \cup S_{\infty}) \sm S$ and consider the Hochschild-Serre spectral sequence $(E_n^{ij}, \delta_n^{ij})$ for the Galois groups of the global extensions
$K_{S \cup V}(p)/K_S(p)/K$. By \cite{NSW} 8.3.18 and 10.4.8, we have:
\[ \cd \Gal_{K, S \cup V}(p) \leq \cd_p \Gal_{K,S \cup V} \leq 2. \]

\noindent By Riemann's existence theorem ($B_p$) the group $\Gal_{K_{S \cup V}(p)/K_S(p)}$ is a free pro-$p$ group. Hence $E_n^{ij}$ degenerates in the second tableau and in particular, we have (omitting $\bZ/p\bZ$-coefficients from the notation)
\[ \coker(\delta_2^{11}) = E_3^{30} = E_{\infty}^{30} \subseteq \coh^3(\Gal_{K, S \cup V}(p)) = 0.\]

\noindent I.e., $\delta_2^{11}$ is surjective. Again by Riemann's existence theorem we have
\begin{equation*}
\coh^1(K_{S \cup V}(p)/K_S(p) ) \cong \bigoplus_{\fp \in V} \Ind\nolimits_{D_{\fp, K_S(p)/K} }^{\Gal_{K,S}(p)} \coh^1(I_{\fp}(p)),
\end{equation*}
\noindent This and Shapiro's lemma imply
\begin{equation}\label{eq:E211comp_for_cd2_sspaper} E_2^{11} = \bigoplus_{\fp \in V} \coh^2(K_{\fp}(p)/K_{\fp}).
\end{equation}

\noindent Further, we have the following commutative diagram with exact rows and columns:

\centerline{
\begin{xy}\label{diag:hihi`}
\xymatrix{
& \bigoplus_{\fp \in S} \coh^2(K_{\fp}(p)/K_{\fp}) \ar@{^{(}->}[d] \ar@{->>}[r] & \coh^0(K_{S \cup V}/K, \mu_p)^{\vee} \ar[d]^{=} \\
\coh^2(K_{S \cup V}(p)/K) \ar[r] \ar[d] & \bigoplus_{\fp \in S \cup V} \coh^2(K_{\fp}(p)/K_{\fp}) \ar@{->>}[r] \ar@{->>}[d] & \coh^0(K_{S \cup V}/K, \mu_p)^{\vee} \ar[d] \\
\coh^1(K_S(p)/K, \coh^1(K_{S \cup V}(p)/K_S(p))) \ar[r]^(0.6){\sim} \ar@{->>}[d]^{\delta_2^{11}} & \bigoplus_{\fp \in V} \coh^2(K_{\fp}(p)/K_{\fp}) \ar[r] & 0 \\
\coh^3(K_S(p)/K) & &
}
\end{xy}
}

\noindent in which the second row comes from the Poitou-Tate long exact sequence. The first map in the third row is the isomorphism \eqref{eq:E211comp_for_cd2_sspaper}. The map in the first row is surjective since its dual map $\mu_p(K) \rar \oplus_{\fp \in S} \mu_p(K_{\fp})$ is injective. Now (in contrast to proofs cited from \cite{NSW} and \cite{Iv}) the first map in the second row is not necessarily injective, but one can simply replace the first entry in the second row by ${\rm H}^2(K_{S \cup V}(p)/K)/ \Sha^2(K_{S \cup V}/K, S \cup V; \bZ/p\bZ)$, as both maps in the diagram which start at this entry factor through this quotient. Now apply the snake lemma to the second and the third row and obtain $\coh^3(K_S(p)/K) = 0$ and hence also $\cd \Gal_{K,S}(p) \leq 2$ by \cite{NSW} 3.3.2. \qedhere
\end{proof}


\subsection{Vanishing of $\Sha^2(G_S; \bZ/p\bZ)$ without $p \in \caO_{K,S}^{\ast}$} \label{sec:hasse_principle_for_Sha2_without_p_inv}

We generalize Corollary \ref{cor:Sha2is0forstable} for the constant module. The proof makes use of Theorem \ref{thm:LocExt_RExt_CD} (A), (B), (C) along with the result of Neumann showing the vanishing of certain cohomology groups. Its special case $\delta_K(S) = 1$ is not contained in \cite{NSW}. Part (i) is \cite{Iv} 4.34.

\begin{prop} \label{prop:Sha2vanishingwoinverting}
Let $K$ be a number field, $S$ a set of primes of $K$. Let $p$ be a rational prime, $r > 0$ an integer. Assume that either $p$ is odd, or $K_S$ is totally imaginary. Then the following holds:
\begin{itemize}
\item[(i)]  Assume $S$ is strongly $p$-stable and $L_0$ is a $p$-stabilizing field for $S$ for $K_{S \cup S_p \cup S_{\infty}}/K$. Assume $p$ is odd, or $L_0$ is totally imaginary. Then
\[ \Sha^2(K_S/L; \bZ/p^r\bZ) = 0 \]
\noindent for any finite $K_S/L/L_0$, such that we are not in the special case $(L, p^r, S)$.
\item[(ii)] Let $K_S/\cL/K$ be a normal subextension. Assume that $S$ is sharply $p$-stable for $\cL/K$ and $p^{\infty}|[\cL:K]$. Then
\[ \dirlim_{\cL/L/K} \Sha^2(K_S/L; \bZ/p^r\bZ) = 0. \]
\end{itemize}
\end{prop}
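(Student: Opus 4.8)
The plan is to adjoin the primes missing for Poitou--Tate duality, solve the problem over the larger set where \cite{NSW} already applies, and transport the vanishing back to $K_S$ by means of Riemann's existence theorem. Put $V:=(S_p\cup S_\infty)\sm S$, so that $S\cup V=S\cup S_p\cup S_\infty\supseteq S_p\cup S_\infty$ and $p^r\in\bN(S\cup V)$. By Proposition~\ref{prop:stableequalbounded}, Lemma~\ref{prop:firstpropsofstabletrivs} and Lemma~\ref{lm:dagger_rel_p_is_nice}, the hypotheses of~(i) (resp.~(ii)) pass to all sufficiently large finite $\cL/L/K$ (in~(i), to all $K_S/L/L_0$), and they guarantee that $S$ is $p$-stable for $K_{S\cup V}/L$ with stabilizing field $L$ and that $(S,K_S/L)$ is $(\dagger)_p^{\rm rel}$ with $R=\emptyset$; hence Theorem~\ref{thm:LocExt_RExt_CD}~(A),(B),(C) are available over such $L$, the hypothesis ``$p$ odd or $L_0$ (resp. $K_S$) totally imaginary'' being exactly what legitimizes the input $\cd_p=2$.

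For~(i), fix $K_S/L/L_0$ outside the special case $(L,p^r,S)$, and freely identify $\bZ/p^r\bZ$-cohomology with that of the maximal pro-$p$ quotients, as in \cite{NSW}. Over $K_{S\cup V}$ one is in the classical situation: Poitou--Tate duality (\cite{NSW} 8.6.7) gives $\Sha^2(K_{S\cup V}/L;\bZ/p^r\bZ)\cong\Sha^1(K_{S\cup V}/L;\mu_{p^r})^{\vee}$, which vanishes by Theorem~\ref{thm:keyresult} (equivalently Corollary~\ref{cor:limHasseforstables}(ii)) applied to $S\cup V$, the special case $(L,p^r,S\cup V)$ not occurring once $(L,p^r,S)$ does not. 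Now take $\xi\in\Sha^2(K_S/L;\bZ/p^r\bZ)$ and inflate it along $\Gal_{L,S\cup V}\surj\Gal_{L,S}$. For $\fp\in(S\cup V)(L)$ the localisation of $\inf\xi$ at $\fp$ factors through the decomposition group $D_{\fp,K_S/L}$: for $\fp\in S$ it vanishes by choice of $\xi$; for $\fp\in V$, Theorem~\ref{thm:LocExt_RExt_CD}~(A) gives $K_\fp^{\nr}(p)\subseteq K_{S,\fp}$, while unramifiedness of $K_S/L$ at $\fp$ gives $K_{S,\fp}\subseteq L_\fp^{\nr}$, so $D_{\fp,K_S/L}$ is procyclic with torsion-free pro-$p$ part (or trivial, if $\fp$ is archimedean), whence $\coh^2(D_{\fp,K_S/L},\bZ/p^r\bZ)=0$. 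Thus $\inf\xi\in\Sha^2(K_{S\cup V}/L;\bZ/p^r\bZ)=0$, so $\xi$ lies in the kernel of inflation, which by the five-term sequence for $K_{S\cup V}(p)/K_S(p)/L$ is the image of $d_2\colon\coh^1(\Gal_{K_{S\cup V}(p)/K_S(p)},\bZ/p^r\bZ)^{\Gal_{K_S(p)/L}}\to\coh^2(\Gal_{K_S(p)/L},\bZ/p^r\bZ)$. By Riemann's existence theorem (Theorem~\ref{thm:LocExt_RExt_CD}~(B), $R=\emptyset$, $T=S\cup V$) together with Neumann's theorem, $\Gal_{K_{S\cup V}(p)/K_S(p)}$ is pro-$p$ free, and by~(B), Shapiro's lemma and Neumann's vanishing $\coh^{\geq 2}$ its invariant $\coh^1$ equals $\bigoplus_{\fp\in V(L)}\coh^2(L_\fp(p)/L_\fp,\bZ/p^r\bZ)$; feeding this into the Poitou--Tate commutative diagram from the proof of Theorem~\ref{thm:LocExt_RExt_CD}~(C) — which now simplifies, the left map of its Poitou--Tate row being injective since $\Sha^2(K_{S\cup V}/L;\bZ/p^r\bZ)=0$ — and chasing with the snake lemma forces $\xi=0$. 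The bookkeeping of the special case at $p=2$ is as in \cite{NSW} 9.1.9; part~(i) is \cite{Iv} 4.34.

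For~(ii), running the same comparison over each large finite $\cL/L/K$ produces a left-exact sequence, functorial in $L$,
\[ 0\to\bigl(\ker(\inf)\cap\Sha^2(K_S/L;\bZ/p^r\bZ)\bigr)\to\Sha^2(K_S/L;\bZ/p^r\bZ)\xrightarrow{\ \inf\ }\Sha^2(K_{S\cup V}/L;\bZ/p^r\bZ). \]
The direct limit over $L$ along restriction of the right-hand term vanishes by Proposition~\ref{prop:lim_over_Sha2_vanishes} with $T:=S\cup V$ — its hypotheses $(S,\cL/K)$ is $(\dagger)_p^{\rm rel}$ and $p^\infty\mid[\cL:K]$ being exactly those assumed — after the routine dévissage from $\bZ/p\bZ$ to $\bZ/p^r\bZ$. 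For the left-hand term one argues as in Theorem~\ref{thm:dirlim_GW_coker_vanishes}: by Theorem~\ref{thm:LocExt_RExt_CD}~(A),(B) and Neumann's theorem it is uniformly bounded (Corollary~\ref{prop:uniformbound_forSha1_constcoeff}) and, as a subquotient of the $d_2$-image, built from $p$-torsion local groups, so for a suitable $\cL/L'/L$ with $p\mid[L':L]$ the composite $\cores\circ\res$ is multiplication by $[L':L]=0$; combined with the eventual injectivity of $\res$ — proved exactly as the injectivity claim in Lemma~\ref{lm:resisbij}, using normality of $\cL/K$ — this yields $\dirlim_{\cL/L/K}\Sha^2(K_S/L;\bZ/p^r\bZ)=0$.

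I expect the main obstacle to be the comparison step in the middle paragraph: everything hinges on controlling the Galois group $\Gal_{K_{S\cup V}(p)/K_S(p)}$ through Riemann's existence theorem and Neumann's vanishing theorem (the latter replacing, at the primes above $p$, the $\mu_p$-duality unavailable over $K_S$ itself), and on arranging the Poitou--Tate diagram of the proof of Theorem~\ref{thm:LocExt_RExt_CD}~(C) so that it outputs $\Sha^2(K_S/L)$ rather than $\coh^3(K_S(p)/L)$; in~(ii) this must in addition be made uniform in $L$, and the $p=2$ and archimedean contributions must be neutralised using the totally-imaginary hypothesis. The remaining steps — the dévissage in $r$, the $\cores\circ\res$ computation and the special-case bookkeeping — are routine adaptations of \cite{NSW} Chapters~IX--X and of the arguments already given above.
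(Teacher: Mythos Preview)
Your overall strategy coincides with the paper's: set $V=(S_p\cup S_\infty)\sm S$, compare $\Sha^2(K_S/L)$ with $\Sha^2(K_{S\cup V}/L)$ through the Hochschild--Serre spectral sequence, using Neumann's theorem, Riemann's existence theorem~(B), and $\cd_p\Gal_{L,S}=2$ from~(C), then finish via Poitou--Tate (for~(i)) resp.\ Proposition~\ref{prop:lim_over_Sha2_vanishes} (for~(ii)).

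The execution of the comparison step, however, is imprecise in a way that matters. The paper works with the intermediate field $K_{S\cup V}'(p)$ (the maximal pro-$p$ extension of $K_S$ inside $K_{S\cup V}$) and writes out the full six-term sequence for $K_{S\cup V}'(p)/K_S/L$; the decisive point is that the transgression $\delta=d_2^{01}\colon E_2^{01}\to\coh^2(\Gal_{L,S})$ vanishes, and this is obtained directly from Grunwald--Wang: the map $\coh^1(\Gal_{L,S\cup V}'(p))\to E_2^{01}$ preceding $\delta$ is surjective by Theorem~\ref{thm:gruwi_cokerform} (for~(i)) resp.\ Theorem~\ref{thm:dirlim_GW_coker_vanishes} in the limit (for~(ii)). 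Once $\delta=0$, inflation $\coh^2(\Gal_{L,S})\hookrightarrow\coh^2(\Gal_{L,S\cup V}'(p))$ is injective and the identification of the two $\Sha^2$'s is immediate. Your argument correctly shows $\infl\xi\in\Sha^2(K_{S\cup V}/L)=0$, but then tries to annihilate the remaining kernel by invoking the snake-lemma diagram from the proof of~(C); that diagram is set up to compute $\coh^3(K_S(p)/K)$ via $E_2^{11}$, not to control the image of $d_2^{01}$, and your identification of the ``invariant $\coh^1$'' with $\bigoplus_{\fp\in V(L)}\coh^2(L_\fp(p)/L_\fp)$ is actually the description of $E_2^{11}$, not of $E_2^{01}$. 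The missing ingredient is precisely the Grunwald--Wang surjectivity step.

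For~(ii) the paper simply repeats the argument in the direct limit, replacing Theorem~\ref{thm:gruwi_cokerform} by Theorem~\ref{thm:dirlim_GW_coker_vanishes}; your detour through uniform bounds and the $\cores\circ\res$ trick is unnecessary once $\delta=0$ is known in the limit.
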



\begin{proof}
Let $V := (S_p \cup S_{\infty}) \sm S$. In the following, we write $\coh^{\ast}(\cdot)$ instead of  $\coh^{\ast}(\cdot, \bZ/p^r\bZ)$ and $\Sha^{\ast}(\cdot,\cdot)$ instead of $\Sha^{\ast}(\cdot,\cdot; \bZ/p^r\bZ)$. Let $K_{S \cup V}^{\prime}(p)$ be the maximal pro-$p$-subextension of $K_{S \cup V}/K_S$. Let $K_S/L/K$ be a finite subextension and consider the following tower of extensions:

\centerline{
\begin{xy}
\xymatrix{
K_{S \cup V} \ar@{-}[d]^N \ar@{-} `l[d] `d[ddd]_{\Gal_{L,S \cup V}} \\
K_{S \cup V}^{\prime}(p) \ar@{-}[d]^H \ar@{-} `r[d] `d[dd]^{\Gal_{L,S \cup V}^{\prime}(p)} \\
K_S \ar@{-}[d]^{\Gal_{L,S}} \\
L
}
\end{xy}
}

\noindent with $N := \Gal_{K_{S \cup V}/ K_{S \cup V}^{\prime}(p)}$, $H := \Gal_{K_{S \cup V}^{\prime}(p)/K_S}$ and $\Gal_{L,S \cup V}^{\prime}(p) := \Gal_{K_{S \cup V}^{\prime}(p)/L}$. We claim that for any such $L$ we have under the assumptions of (i) resp. (ii), the following natural isomorphisms:

\begin{align}\label{eqn:iso_iso_von_Sha_2}
& \Sha^2(K_{S \cup V}^{\prime}(p)/L, S \cup V) = \Sha^2(K_{S \cup V}/L, S \cup V)  \quad \text{ for any $K_S/L/K$ and } \nonumber \\
& \Sha^2(K_S/L, S) = \Sha^2(K_{S \cup V}^{\prime}(p)/L, S \cup V) \quad \text{for any $K_S/L/L_0$ under (i), and} \\
& \dirlim_{\cL/L/K} \Sha^2(K_S/L, S) = \dirlim_{\cL/L/K}  \Sha^2(K_{S \cup V}^{\prime}(p)/L, S \cup V) \quad \text{under (ii).} \nonumber
\end{align}

\noindent Once this claim is shown, (i) follows immediately from Corollary \ref{cor:Sha2is0forstable} and (ii) follows from Proposition \ref{prop:lim_over_Sha2_vanishes}. Thus it is enough to prove the above claim. The first isomorphism in \eqref{eqn:iso_iso_von_Sha_2} follows immediately from the definition of $\Sha^2$, once we know that the inflation map $\coh^2(\Gal_{L, S \cup V}^{\prime}(p)) \rar \coh^2(\Gal_{L, S \cup V})$ is an isomorphism. To show this last assertion, consider the Hochschild-Serre spectral sequence

\[ E^{ij}_2 = \coh^i(\Gal_{L, S \cup V}^{\prime}(p), \coh^j(N)) \Rar \coh^{i+j}(\Gal_{L, S \cup V}). \]

\noindent A result of Neumann (\cite{NSW} 10.4.2) applied to $K_{S \cup V}/K_{S \cup V}^{\prime}(p)$ (the upper field is $p-(S\cup V)$-closed, the lower is $p-(S_p \cup S_{\infty})$-closed) implies $E^{ij}_2 = 0$ for $j > 0$, hence the sequence degenerates in the second tableau and
\[ \coh^i(\Gal_{S \cup V}^{\prime}(p)) = \coh^i(\Gal_{S \cup V}), \]
\noindent for $i \geq 0$, proving our claim. Thus we are reduced to show that the second and the third maps in \eqref{eqn:iso_iso_von_Sha_2} are isomorphisms. For $\fp \in V$, let $K_{\fp}^{\prime}(p)$ denote the maximal pro-$p$ extension of $K_{S,\fp}$. Let
\[ I_{\fp}^{\prime}(p) :=  \Gal_{K_{\fp}^{\prime}(p)/K_{S,\fp}} \]

\noindent (observe that if $\fp \in S_{\infty}$, then $I_{\fp}^{\prime}(p) = 1$. Indeed, if $p > 2$, this is always the case, and if $p = 2$, then $K_{S,\fp} = \bC$ using the assumption that $K_S$ is totally imaginary). By \cite{Iv} Lemma 4.23 (which was only shown there under strong $p$-stability assumption on $S$, but due to Theorem \ref{thm:LocExt_RExt_CD} (A), it also holds under sharp $p$-stability assumption with exactly the same proof), we have $I_{\fp}^{\prime}(p) = D_{\fp, K_{S \cup V}^{\prime}(p)/K_S}$. By Riemann's existence Theorem \ref{thm:LocExt_RExt_CD} (B) applied to $K_{S \cup V}^{\prime}(p)/K_S/K$, we have
\[ H \cong \bigast\limits_{\fp \in V(K_S)} I_{\fp}^{\prime}(p). \]

\noindent By \cite{Iv} Corollary 4.24, the groups $I_{\fp}^{\prime}(p)$ are free pro-$p$-groups, and hence $H$ is free pro-$p$-group. Thus $\cd_p H \leq 1$. Consider the exact sequence
\[1 \rar H \rar \Gal_{L, S \cup V}^{\prime}(p) \rar \Gal_{L,S} \rar 1, \]

\noindent and the corresponding Hochschild-Serre spectral sequence
\[ E_2^{ij} = \coh^i(\Gal_{L,S}, \coh^j(H)) \Rightarrow \coh^{i+j}(\Gal_{L,S \cup V}^{\prime}(p))\]

\noindent Since by Theorem \ref{thm:LocExt_RExt_CD}(C) we know that $\cd_p \Gal_{L,S} = 2$, we have $E^{ij}_2 = 0$ if $i > 2$ or $j > 1$. Moreover, we have

\[ \coh^1(H) = \Resdirsum_{V(K_S)} \coh^1(I_{\fp}^{\prime}(p)) = \bigoplus_{V(L)} \Ind\nolimits_{D_{\fp, K_S/L}}^{\Gal_{L,S}} \coh^1(I_{\fp}^{\prime}(p)) \]

\noindent as $\Gal_{L,S}$-modules, where $D_{\fp,K_S/L} \subseteq \Gal_{L,S}$ is the decomposition group at $\fp$, which is in particular pro-cyclic and has an infinite $p$-Sylow subgroup (by Theorem \ref{thm:LocExt_RExt_CD}(A)). Using this, an easy computation involving Frobenius reciprocity, Shapiro's lemma and \cite{Iv} Lemma 4.24 allows us to compute the terms $E_2^{01}$ and $E_2^{11}$. We obtain the following exact sequence (where $\delta := \delta_2^{01} \colon E_2^{01} \rar E_2^{20}$ denotes the differential in the second tableau):

\centerline{
\begin{xy}
\xymatrix{
0 \ar[r] & \coh^1(\Gal_{L,S}) \ar[r] & \coh^1(\Gal_{L,S \cup V}^{\prime}(p)) \ar[r] & \bigoplus\limits_{V(L)} \coh^1(I_{\fp}^{\prime}(p))^{D_{\fp, K_S/L}} \ar[r] & \\
\ar[r]^(0.3){\delta} & \coh^2(\Gal_{L,S}) \ar[r] & \coh^2(\Gal_{L,S \cup V}^{\prime}(p)) \ar^(0.5){d}[r] & \bigoplus\limits_{V(L)} \coh^2(\cG_{\fp}) \ar[r] & 0.
}
\end{xy}
}
\noindent Assume first we are in the situation of (i) and let $L$ be as introduced there. We have the following surjections:

\[ \coh^1(\Gal_{L,S \cup V}^{\prime}(p)) \tar \bigoplus_{\fp \in V(L)} \coh^1(\cG_{\fp}) = \bigoplus_{\fp \in V(L)} \coh^1(D_{\fp,K_{S \cup V}^{\prime}(p)/L}) \tar \bigoplus_{V(L)} \coh^1(I_{\fp}^{\prime}(p))^{D_{\fp,K_S/L}}, \]

\noindent (the first map is surjective by Grunwald-Wang Theorem \ref{thm:gruwi_cokerform}, and the second and the third maps follow from \cite{Iv} Lemma 4.24. Hence the map preceding $\delta$ is surjective and hence $\delta = 0$. Thus the lower row of the above 6-term exact sequence gives the short exact sequence

\vspace{1ex}

\centerline{
\begin{xy}
\xymatrix{
0 \ar[r] & \Sha^2(K_S/L, S) \ar[r] & \Sha^2(K_{S \cup V}^{\prime}(p)/L, S) \ar^(0.55){d}[r] & \bigoplus\limits_{V(L)} \coh^2(\cG_{\fp}),
}
\end{xy}
}

\noindent On the other side, by definition of $\Sha^2$, the kernel of $d$ is precisely $\Sha^2(K_{S \cup V}^{\prime}(p)/K, S \cup V)$, which shows the second equality in \eqref{eqn:iso_iso_von_Sha_2}. The third equality in \eqref{eqn:iso_iso_von_Sha_2} follows from the assumptions in (ii) by the same arguments after taking $\dirlim$ over $\cL/L/K$ (and using Theorem \ref{thm:dirlim_GW_coker_vanishes} instead of Theorem \ref{thm:gruwi_cokerform}).
\end{proof}




\section{$\Kapi$-property} \label{sec:Kapi_section}

Assume that either $p$ is odd, or $K$ is totally imaginary and let
\[X = \Spec \caO_{K,S}. \]

\noindent While it is well known that $X$ is a $\Kapi$ for $p$ if either $S \supseteq S_p \cup S_{\infty}$ (``wild case''), or $\delta_K(S) = 1$, it is a challenging problem to determine whether $X$ is a $\Kapi$ if $S$ is finite and not necessarily contains $S_p \cup S_{\infty}$. Until recently there were no non-trivial examples of $(K,S)$ such that $X$ is a $\Kapi$ for $p$ or a pro-$p$ $\Kapi$ and, say, $S \cap S_p = \emptyset$. Recent results of A. Schmidt (\cite{Sch}, \cite{Sch2}, cf. also \cite{Sch3}) show that any point of $\Spec\caO_K$ has a basis for Zariski-topology consisting of pro-$p$ $\Kapi$-schemes. More precisely, given $K$, a finite set $S$ of primes of $K$, a rational prime $p$ and any set $T$ of primes of $K$ of density 1, Schmidt showed that one can find a finite subset $T_1 \subseteq T$ such that $X \sm T_1$ is pro-$p$ $\Kapi$. The main ingredient in the proof is the theory of mild pro-$p$ groups, developed by Labute. We conjecture that one can replace the condition $\delta_K(T) = 1$ in 
Schmidt's work by the weaker condition that $T$ is strongly $p$-stable (or even that $T$ is sharply $p$-stable for $K_T(p)/K$).

In the present section we enlarge the set of the examples of such pairs $(K,S)$, for which $X$ is a $\Kapi$ for $p$ and prove essentially that if $S$ is sharply  $p$-stable, then $X$ is a $\Kapi$ for $p$. In particular, if $S$ is a stable almost Chebotarev set with $S_{\infty} \subseteq S$, then $X$ is a $\Kapi$ for almost all primes $p$ (cf. Proposition \ref{prop:finitnessofES} and Example \ref{ex:pers_ES_fin_and_nonempty}), and if $E^{\rm sharp}(S) = \emptyset$ and $K$ is totally imaginary, then $X$ is a $\Kapi$.

\subsection{Generalities on the $\Kapi$-property}

There are many equivalent ways to characterize the $\Kapi$-property of schemes (cf. \cite{St} Appendix A, where they are discussed in detail). Without repeating all of them, we want to introduce a small refinement of terminology, such that it is better adapted to formulate our results.

To begin with, let $X$ be a connected scheme, $X_{\et}$ the \'{e}tale site on $X$. Fix a geometric point $\bar{x} \in X$ and let $\pi := \pi_1(X,\bar{x})$ be the \'{e}tale fundamental group of $X$. Let $\cB \pi$ denote the site of continuous $\pi$-sets endowed with the canonical topology. Let further $p$ be a rational prime, and let $\cB \pi^p$ denote the site of continuous $\pi^{(p)}$-sets, where $\pi^{(p)}$ is the pro-$p$ completion of $\pi$. As in \cite{St} A.1, we have natural continuous maps of sites

\centerline{
\begin{xy}
\xymatrix{
X_{\et} \ar[r]^{\gamma} \ar[dr]_{\gamma_p} & \cB \pi \ar[d] \\
& \cB \pi^p
}
\end{xy}
}

\noindent For a site $Y$, let $\cS(Y)$ denote the category of sheaves of abelian groups on $Y$, let $\cS(Y)_f$ be the subcategory of locally constant torsion sheaves, and $\cS(Y)_p$ the subcategory of locally constant $p$-primary torsion sheaves. Let $A \in \cS(\cB\pi)_f$ resp. $B \in \cS(\cB\pi^p)_p$. Then we have the natural transformations of functors $\id \rar \R \gamma_{\ast} \gamma^{\ast}$ resp. $\id \rar \R \gamma_{p,\ast} \gamma_p^{\ast}$, which induce maps in the cohomology:

\begin{eqnarray*}
c_A^i \colon& \coh^i(\pi, A) &\longrightarrow \coh^i(X_{\et}, \gamma^{\ast} A ) \\
c_{p,B}^i \colon& \coh^i(\pi^{(p)}, B) &\longrightarrow \coh^i(X_{\et}, \gamma_p^{\ast} B).
\end{eqnarray*}

\noindent Let $\tilde{X}$ resp. $\tilde{X}(p)$ denote the universal resp. the universal pro-$p$ covering of $X$. Since
\[ \coh^1(\tilde{X}_{\rm et}, A) = \coh^1(\tilde{X}(p)_{\rm et}, B) = 0 \]

\noindent for each $A, B$, the maps $c_A^i$ and $c_{p,B}^i$ are isomorphisms for $i = 0,1$ and are injective for $i = 2$.

\begin{Def}\label{def:Kapidef} Let $X$ be a connected scheme.
\begin{itemize}
\item[(i)]   $X$ is \emph{a $\Kapi$} if $c_A^i$ is an isomorphism for all $A \in \cS(\cB\pi)_f$ for all $i \geq 0$.
\item[(ii)]  $X$ is \emph{a $\Kapi$ for $p$} if $c_A^i$ is an isomorphism for all $A \in \cS(\cB\pi)_p$ for all $i \geq 0$.
\item[(iii)] $X$ is \emph{a pro-$p$ $\Kapi$} if $c_{p,B}^i$ is an isomorphism for all $B \in \cS(\cB\pi^p)_p$ for all $i \geq 0$.
\end{itemize}
\end{Def}

Notice that we use a shift in the definitions compared with \cite{Sch} or \cite{Wi2}: what there is called a $\Kapi$ for $p$, we call here a pro-$p$ $\Kapi$. Parts (i) and (iii) of our definition coincide with the definition of a $K(\pi,1)$ in \cite{St} A.1.2. By decomposing any sheaf into $p$-primary components we obtain:

\begin{lm}
$X$ is a $\Kapi$ if and only if it is a $\Kapi$ for all $p$.
\end{lm}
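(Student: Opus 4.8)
The plan is to treat the two implications separately. For ``algebraic $\Kapi$ $\Rightarrow$ algebraic $\Kapi$ for every $p$'' there is essentially nothing to do beyond an inclusion of coefficient categories: a $p$-primary torsion sheaf is a torsion sheaf, so $\cS(\cB\pi)_p \subseteq \cS(\cB\pi)_f$, and hence the assumption that $c_A^i$ is an isomorphism for all $A \in \cS(\cB\pi)_f$ and all $i$ specializes to the same statement for $A \in \cS(\cB\pi)_p$, for each prime $p$.

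For the converse I would start with an arbitrary $A \in \cS(\cB\pi)_f$, regarded as a discrete torsion $\pi$-module, and use its canonical $p$-primary decomposition $A = \bigoplus_{p} A\{p\}$. Two observations make this work. First, each $A\{p\}$ is again locally constant (on an open where $A$ is constant with value $M$, the summand $A\{p\}$ is constant with value the $p$-part $M\{p\}$) and $p$-primary torsion, hence $A\{p\} \in \cS(\cB\pi)_p$; so the hypothesis ``$X$ is algebraic $\Kapi$ for $p$'' gives that $c^i_{A\{p\}}$ is an isomorphism for all $i$. Second, both cohomology theories appearing in $c^i$ — namely $\coh^i(\pi,-)$ and $\coh^i(X_{\et},\gamma^{\ast}(-))$ — commute with the direct sum $\bigoplus_p$: for finite coefficients the sum is finite and this is automatic, while in general one writes $\bigoplus_p$ as the filtered colimit of its finite partial sums and invokes that profinite group cohomology commutes with filtered colimits of discrete modules and that the \'etale cohomology of $X$ (in all applications of this paper the affine scheme $\Spec\caO_{K,S}$, in particular quasi-compact quasi-separated) commutes with filtered colimits of sheaves. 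Since $\gamma^{\ast}$ is a left adjoint it commutes with direct sums, and the natural transformation $\id \to \R\gamma_{\ast}\gamma^{\ast}$ defining $c^i$ is additive, so under these identifications $c_A^i = \bigoplus_p c^i_{A\{p\}}$; being a direct sum of isomorphisms it is an isomorphism. As $A$ and $i$ were arbitrary, $X$ is algebraic $\Kapi$.

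I do not anticipate a genuine obstacle here. The only points needing care are that the $p$-primary splitting is a splitting of $\pi$-modules (it is, since $A\{p\}$ is a characteristic subgroup of $A$, hence stable under $\Aut A$ and a fortiori under $\pi$) and that the comparison maps $c^i$ are additive (they are, being induced by a morphism of sites, so compatible with finite and filtered colimits of sheaves). In the reading where $\cS(\cB\pi)_f$ consists of \emph{finite} locally constant sheaves the colimit bookkeeping disappears altogether and the reverse implication is a one-line argument.
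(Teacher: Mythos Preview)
Your argument is correct and is exactly the approach the paper indicates: the paper simply says ``by decomposing any sheaf into $p$-primary components'' and states the lemma, and you have supplied the (straightforward) details of that decomposition and the compatibility of $c^i$ with direct sums.
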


Now we have a criterion for being $\Kapi$. For a scheme $X$ let $\Fet_X$ (resp. $\Fet_X^{(p)}$) denote the category of all finite \'{e}tale coverings (resp. finite \'etale $p$-coverings) of $X$. For a number field $K$ let

\[ \delta_K = \begin{cases} 1 & \text{if } \mu_p \subseteq K, \\ 0 & \text{otherwise.} \end{cases}\]

\begin{prop}\label{prop:arithKp1_App}
Let $K$ be a number field, $S \supseteq S_{\infty}$ a set of primes of $K$ such that either $\delta_K = 0$ or $S_f \neq \emptyset$. Assume that either $p$ is odd, or $K$ is totally imaginary. Let $X = \Spec \caO_{K,S}$. The following are equivalent:
\begin{itemize}
\item[(i)] $X$ is a $K(\pi,1)$ for $p$.
\item[(ii)] One has
\[ \dirlim_{Y \in \Fet_X } \coh^2(Y_{\et},\bZ/p\bZ) = 0. \]
\end{itemize}
The same also holds if one replaces '$K(\pi,1)$ for $p$' by 'pro-$p$ $\Kapi$' and '$\Fet_X$' by '$\Fet_X^{(p)}$' respectively.
\end{prop}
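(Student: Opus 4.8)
The plan is to prove the equivalence (i) $\Leftrightarrow$ (ii) by identifying the direct limit of \'etale cohomology over finite (resp. finite pro-$p$) covers with the cohomology of the universal (resp. universal pro-$p$) cover, and then translating the $K(\pi,1)$-condition into the vanishing of that limit. First I would recall that, since $X = \Spec\caO_{K,S}$, the \'etale fundamental group $\pi = \pi_1(X,\bar x)$ is canonically $\Gal_{K,S}$, finite \'etale covers correspond to finite subextensions of $K_S/K$, and for a locally constant $p$-primary sheaf $A$ one has $\coh^i(X_{\et},\gamma^{\ast}A) = \coh^i(\Gal_{K,S},A)$ and, for a cover $Y$ corresponding to $K_S/L/K$, $\coh^i(Y_{\et},\gamma^{\ast}A) = \coh^i(\Gal_{L,S},A)$. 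Passing to the limit over all finite $K_S/L/K$ gives $\dirlim_Y \coh^i(Y_{\et},\bZ/p\bZ) = \coh^i(\ker(\Gal_{K,S}\to 1),\dots)$; more precisely, by the usual compatibility of group cohomology with filtered colimits of subgroups (and the fact that $\bigcap_L \Gal_{L,S} = 1$) one gets $\dirlim_L \coh^i(\Gal_{L,S},\bZ/p\bZ) = 0$ for $i>0$ exactly when $\tilde X$ has trivial cohomology, but this needs care: the colimit computes $\coh^i$ of the trivial group only in degree $0$, so the content of (ii) is genuinely a statement and not a formality.

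Next I would set up the cohomological-dimension input. The point is that $c_A^i$ is automatically an isomorphism for $i = 0,1$ and injective for $i=2$ (this is stated in the excerpt, coming from $\coh^1(\tilde X_{\et},A)=0$). So $X$ is $K(\pi,1)$ for $p$ iff $c_A^i$ is surjective for $i=2$ and an isomorphism for all $i\geq 3$, for every $A\in\cS(\cB\pi)_p$. Under the running hypotheses ($p$ odd or $K$ totally imaginary, $S\supseteq S_\infty$, and $\delta_K=0$ or $S_f\neq\emptyset$), one knows $\cd_p\Gal_{K,S}\leq 2$ in the relevant cases — but the subtlety is that here $S$ need not contain $S_p$, so we cannot invoke the wild-case results directly; instead I would use the Hochschild–Serre spectral sequence for $\tilde X\to X$ (equivalently, for $1\to\ker\to\Gal_{K,S}\to 1$ read as a limit) to get, for each finite $p$-primary $A$,
\[
E_2^{ij} = \coh^i\bigl(\Gal_{K,S},\ \dirlim_L \coh^j(\Gal_{L,S},A)\bigr) \ \Longrightarrow\ \coh^{i+j}(X_{\et},\gamma^{\ast}A).
\]
Condition (ii) says precisely that $E_2^{ij}=0$ for $j>0$ (at least after reducing to $A=\bZ/p\bZ$ via a filtration and a Shapiro/induction argument, since every finite $p$-primary $\pi$-module sits in a short exact sequence of such modules and the limit is exact), so the spectral sequence degenerates and $c_A^i$ is an isomorphism for all $i$ — giving (ii) $\Rightarrow$ (i). Conversely, if $X$ is $K(\pi,1)$ for $p$, then $c_{\bZ/p\bZ}^i$ is an isomorphism for all $i$, and on the other hand $\coh^i(X_{\et},\bZ/p\bZ)$ for $X=\Spec\caO_{K,S}$ vanishes for $i\geq 3$ (arithmetic cohomological dimension under our hypotheses), which forces the limit term $\dirlim_L\coh^2(\Gal_{L,S},\bZ/p\bZ)$ to vanish — this is (i) $\Rightarrow$ (ii). I would treat the pro-$p$ version verbatim, replacing $\Gal_{K,S}$ by $\Gal_{K,S}(p)$, $\Fet_X$ by $\Fet_X^{(p)}$, $\gamma$ by $\gamma_p$, and using that $\cd_p\Gal_{K,S}(p)\leq 2$ under the same hypotheses (the needed input being $\cd\Gal_{K,S}^R(p)=2$, available once Theorem~\ref{thm:LocExt_RExt_CD}(C) is in force, though here one only needs the inequality $\leq 2$, hence the weaker classical statement suffices).

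The main obstacle I expect is the reduction from arbitrary $A\in\cS(\cB\pi)_p$ to the constant sheaf $\bZ/p\bZ$ while keeping the direct-limit statement clean: passing from $\bZ/p\bZ$ to $\bZ/p^r\bZ$ and then to general finite $p$-primary $A$ requires a dévissage through short exact sequences, and one must check that $\dirlim_L\coh^j(\Gal_{L,S},-)$ is exact (it is, being a filtered colimit) and that the vanishing in degree $j>0$ propagates — this is where the hypothesis on $S$ (via $\delta_K=0$ or $S_f\neq\emptyset$) and the oddness/total-imaginariness assumption enter, to ensure there is no degree-$2$ or higher obstruction lurking in $\coh^{\ast}(\Gal_{K,S},A)$ for twisted $A$ that the constant-coefficient statement does not see. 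The other delicate point is the identification $\dirlim_Y\coh^i(Y_{\et},\bZ/p\bZ)=0 \Leftrightarrow$ the Hochschild–Serre $E_2$-terms with $j>0$ vanish; this is standard but I would state it as a lemma and cite the analogue of \cite{St} A.1, since it is exactly the bridge between the geometric formulation (ii) and the group-cohomological machinery.
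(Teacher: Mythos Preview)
Your argument has a circularity that undermines both directions. Early on you write that for $Y\to X$ corresponding to $K_S/L/K$ one has $\coh^i(Y_{\et},A)=\coh^i(\Gal_{L,S},A)$, and you then feed this into a Hochschild--Serre spectral sequence with $E_2^{ij}=\coh^i(\Gal_{K,S},\dirlim_L\coh^j(\Gal_{L,S},A))$. But the identification $\coh^i(Y_{\et},A)\cong\coh^i(\pi_1(Y),A)$ is precisely the $K(\pi,1)$-property for $Y$, which is equivalent to that for $X$; assuming it is assuming what you want to prove. The correct Cartan--Leray sequence for $\tilde X\to X$ has $E_2^{ij}=\coh^i(\pi,\coh^j(\tilde X_{\et},A))$ with $\coh^j(\tilde X_{\et},A)=\dirlim_Y\coh^j(Y_{\et},A)$ the \emph{\'etale} cohomology of the universal cover, not group cohomology. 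So for (ii)~$\Rightarrow$~(i) you must show that this \'etale limit vanishes for all $j>0$, and hypothesis (ii) only gives you $j=2$ with $A=\bZ/p\bZ$: it does \emph{not} say $E_2^{ij}=0$ for all $j>0$, contrary to what you claim.

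The paper fills the remaining degrees by purely arithmetic input that your sketch does not mention. After the d\'evissage to $A=\bZ/p\bZ$ (which you correctly identify), one treats each $q$ separately: for $q=1$ any class is a $\bZ/p\bZ$-torsor and dies on the cover it defines; for $q>3$ one has $\coh^q(Y_{\et},\bZ/p\bZ)=0$ by the \'etale cohomological dimension bound of \cite{SGA4}~X.6.1; and the case $q=3$ is handled by Artin--Verdier duality, which is exactly where the hypothesis ``$\delta_K=0$ or $S_f\neq\emptyset$'' (together with $p$ odd or $K$ totally imaginary) is used. Your attempt to invoke $\cd_p\Gal_{K,S}\leq 2$ instead does not work here, as you yourself note that $S_p\subseteq S$ is not assumed. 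For (i)~$\Rightarrow$~(ii), the clean argument is that the $K(\pi,1)$-property passes to connected \'etale covers, so each $\coh^2(Y_{\et},\bZ/p\bZ)\cong\coh^2(\pi_1(Y),\bZ/p\bZ)$, and the filtered colimit of group cohomology over all open subgroups of $\pi$ vanishes in positive degree; your version (``vanishing of $\coh^i(X_{\et},\bZ/p\bZ)$ for $i\geq 3$ forces the limit to vanish'') is a non sequitur.
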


\begin{proof} For the full proof, cf. \cite{Iv} Proposition 5.5. For convenience, we sketch here the main steps. (i) $\Rightarrow$ (ii) holds for any connected scheme and follows from \cite{St} A.3.1 and (ii) $\Rightarrow$ (i) follows from the well-known criterion \cite{St} A.3.1, and the fact that for every $q > 0$ and every locally constant $p$-primary torsion sheaf $A$ on $X_{\et}$, we have

\[ \dirlim_{Y \in \Fet_X} \coh^q(Y_{\et},A|_Y) = 0. \]

\noindent Since $A$ is trivialized on some $Y \in \Fet_X$, we can assume that $A$ is constant. By d\'{e}vissage we are reduced to the case $A = \bZ/p\bZ$. The elements of $\coh^1(Y_{\et},\bZ/p\bZ)$ can be interpreted as torsors, which kill themselves, i.e., the case $q = 1$ follows. Further by \cite{SGA4} Expos\'{e} X Proposition 6.1, $\coh^q(Y_{\et},\bZ/p\bZ) = 0$ for $q > 3$. The case $q = 3$ follows from Artin-Verdier duality. Finally, (ii) implies the case $q = 2$. The pro-$p$ case has a similar proof.
\end{proof}


\subsection{$\Kapi$ and sharp $p$-stability}

\begin{thm}\label{thm:Kapi1forstable}
Let $K$ be a number field, $S \supseteq S_{\infty}$ a set of primes of $K$ and $p$ a rational prime. Assume that either $p$ is odd, or $K$ is totally imaginary. The following holds:
\begin{itemize}
\item[(i)] If $S$ is sharply $p$-stable for $K_S(p)/K$, then $\Spec \caO_{K,S}$ is a pro-$p$ $\Kapi$. \\
\item[(ii)] If $S$ is sharply $p$-stable, then $\Spec \caO_{K,S}$ is a $\Kapi$ for $p$.

\end{itemize}
\end{thm}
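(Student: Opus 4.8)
The plan is to use the cohomological criterion of Proposition~\ref{prop:arithKp1_App} to reduce both assertions to the vanishing of a direct limit of second \'etale cohomology groups, and then to feed in the limit-vanishing theorems of Sections~\ref{sec:GWT_and_dagger} and~\ref{sec:hasse_principle_for_Sha2_without_p_inv}. First I would record that $(\dagger)_p^{\rm rel}$ (resp.\ $(\dagger)_p$) forces $S$ to be stable, hence infinite, so $S_f\neq\emptyset$; together with the hypothesis that $p$ is odd or $K$ is totally imaginary, Proposition~\ref{prop:arithKp1_App} then reduces the claim for $X:=\Spec\caO_{K,S}$ to
\[ \dirlim\coh^2_{\et}(\Spec\caO_{L,S},\bZ/p\bZ)=0, \]
the limit running over the finite subextensions $L$ of $\cL/K$, where $\cL:=K_S(p)$ in case~(i) (limit over $\Fet_X^{(p)}$) and $\cL:=K_S$ in case~(ii) (limit over $\Fet_X$). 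In both cases $(S,\cL/K)$ is $(\dagger)_p^{\rm rel}$; and $p^{\infty}\mid[\cL:K]$, because $(\dagger)_p^{\rm rel}$ makes $S$ stable for $\cL/K$ or for $\cL(\mu_p)/K$ (a field containing $K(\mu_p)$), so $\delta_{K(\mu_p)}(S)>0$, whence $S\cap\cs(K(\mu_p)/K)$ is infinite and Lemma~\ref{lm:pinfty_divides_ord_of_GalKS} gives $p^{\infty}\mid[K_S(p):K]\mid[\cL:K]$. Thus Theorem~\ref{thm:dirlim_GW_coker_vanishes}, Proposition~\ref{prop:lim_over_Sha2_vanishes} and Proposition~\ref{prop:Sha2vanishingwoinverting} all apply for this $\cL$.

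Next I would compare $\coh^2_{\et}(\Spec\caO_{L,S},\bZ/p\bZ)$ with Galois cohomology. Put $V:=(S_p\cup S_{\infty})\sm S=S_p\sm S$, a finite set of closed points of $X$ of residue characteristic $p$; its open complement $\Spec\caO_{L,S\cup V}$ carries $p$ as a unit, so $\coh^i_{\et}(\Spec\caO_{L,S\cup V},\bZ/p\bZ)=\coh^i(\Gal_{L,S\cup V},\bZ/p\bZ)$. Absolute purity is unavailable at the points of $V$, but the \'etale cohomology with support there is computed directly through the Henselian local rings (finite residue fields of characteristic $p$, fraction fields whose absolute Galois groups are the decomposition groups): one obtains $\coh^i_V(\Spec\caO_{L,S},\bZ/p\bZ)=0$ for $i\le1$, $\coh^2_V=\bigoplus_{\fp\in V}\coh^1(\cI_{\fp},\bZ/p\bZ)^{\cG_{\fp}}$ and $\coh^3_V=\bigoplus_{\fp\in V}\coh^2(\cG_{\fp},\bZ/p\bZ)$, the boundary $\coh^2(\Gal_{L,S\cup V},\bZ/p\bZ)\to\coh^3_V$ being the localization map. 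Inserting this into the localization sequence yields, after a short diagram chase, a natural short exact sequence
\[ 0\rar\coker\!\Big(\coh^1(\Gal_{L,S\cup V},\bZ/p\bZ)\rar\bigoplus_{\fp\in V}\coh^1(\cI_{\fp},\bZ/p\bZ)^{\cG_{\fp}}\Big)\rar\coh^2_{\et}(\Spec\caO_{L,S},\bZ/p\bZ)\rar\Sha^2_V\rar0, \]
where $\Sha^2_V:=\ker\big(\coh^2(\Gal_{L,S\cup V},\bZ/p\bZ)\to\bigoplus_{\fp\in V}\coh^2(\cG_{\fp},\bZ/p\bZ)\big)$.

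Then I would pass to the direct limit over the finite subextensions of $\cL/K$; as filtered limits are exact, it suffices to kill each outer term. The left-hand term is a quotient of the Grunwald-Wang cokernel $\coker^1(K_{S\cup V}/L,V,\bZ/p\bZ)$ (note $V$ is finite and equals $(S_p\cup S_{\infty})\sm S$), whose direct limit vanishes by Theorem~\ref{thm:dirlim_GW_coker_vanishes}. For $\Sha^2_V$ I would run the argument of Proposition~\ref{prop:Sha2vanishingwoinverting} essentially verbatim: using the Poitou-Tate sequence for $\Gal_{L,S\cup V}$ (legitimate since $p\in\caO_{L,S\cup V}^{\ast}$), the equality $\cd_p\Gal_{L,S\cup V}=2$, Neumann's theorem (\cite{NSW}~10.4.2) applied to $K_{S\cup V}/K_{S\cup V}^{\prime}(p)$, and Riemann's existence theorem together with the realization of local extensions (Theorem~\ref{thm:LocExt_RExt_CD}(A),(B): $K_S(p)_{\fp}=K_{\fp}^{\nr}(p)$ for $\fp\in V$, and $\Gal_{K_{S\cup V}^{\prime}(p)/K_S}$ a free pro-$p$ product of the inertia groups $I_{\fp}^{\prime}(p)$, themselves free pro-$p$), one reduces $\dirlim_L\Sha^2_V$ to $\dirlim_L\Sha^2(K_S/L,S;\bZ/p\bZ)$ up to the Grunwald-Wang cokernel already disposed of; and $\dirlim_L\Sha^2(K_S/L,S;\bZ/p\bZ)=0$ by Proposition~\ref{prop:Sha2vanishingwoinverting}(ii). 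Hence $\dirlim_L\coh^2_{\et}(\Spec\caO_{L,S},\bZ/p\bZ)=0$, proving both~(i) and~(ii). The assumption that $p$ is odd or $K$ is totally imaginary is used only to ensure that we never fall into the Grunwald-Wang special case, so that all the vanishing statements invoked are the clean ones; in case~(i) the whole argument is carried out with the pro-$p$ variants throughout.

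The main obstacle is the second step: establishing the \'etale-versus-Galois comparison when $p$ is not invertible, that is, controlling the cohomology of $X$ with support at its characteristic-$p$ points, where absolute purity fails --- and then the bookkeeping in the third step that turns the auxiliary group $\Sha^2_V$ (the kernel of localization at the small finite set $V$ alone) into the genuine Shafarevich group $\Sha^2(K_S/L,S;\bZ/p\bZ)$, which is precisely where Riemann's existence theorem, the cohomological-dimension computation and Neumann's vanishing theorem become indispensable. Granting these structural inputs, Theorem~\ref{thm:Kapi1forstable} follows formally from the limit-vanishing results established earlier.
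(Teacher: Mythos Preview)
Your overall strategy matches the paper's: reduce via Proposition~\ref{prop:arithKp1_App} to the vanishing of $\dirlim_L\coh^2_{\et}(\Spec\caO_{L,S},\bZ/p\bZ)$, compare \'etale with Galois cohomology over $S\cup V$ (where $V=S_p\sm S$), and kill the two pieces using the limit Grunwald--Wang result and a $\Sha^2$-vanishing argument. Your use of the localization sequence (cohomology with support at $V$) in place of the paper's Leray spectral sequence for $j\colon Y\sm V\hookrightarrow Y$ is a genuine but harmless variation: both produce exactly the same six-term sequence, and your short exact sequence with outer terms the Grunwald--Wang cokernel and $\Sha^2_V$ is correct.

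Where your write-up diverges is the final step. You propose to kill $\dirlim_L\Sha^2_V=\dirlim_L\Sha^2(K_{S\cup V}/L,V;\bZ/p\bZ)$ by ``running the argument of Proposition~\ref{prop:Sha2vanishingwoinverting} verbatim'' and then invoking~\ref{prop:Sha2vanishingwoinverting}(ii). But Proposition~\ref{prop:Sha2vanishingwoinverting} (and its proof, with Neumann's theorem, Riemann's existence, Hochschild--Serre for $K_{S\cup V}'(p)/K_S$) only identifies $\Sha^2(K_S/L,S)$ with $\Sha^2(K_{S\cup V}/L,S\cup V)$ in the limit; it says nothing about the larger group $\Sha^2(K_{S\cup V}/L,V)$. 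You still need to bridge $V$ to $S\cup V$, i.e.\ to show that the quotient $\Sha^2_V/\Sha^2(K_{S\cup V}/L,S\cup V)$ dies in the limit. The paper does precisely this, and directly: it writes the Poitou--Tate row for $S\cup V$ over the obvious row supported on $V$, observes that the first map of the Poitou--Tate row becomes injective in the limit by Proposition~\ref{prop:lim_over_Sha2_vanishes}, and then the snake lemma embeds $\dirlim_L\Sha^2_V$ into $\dirlim_L\bigoplus_{\fp\in S}\coh^2(\cG_{\fp})$, which vanishes because $p^{\infty}\mid[K_{S,\fp}:K_{\fp}]$ for $\fp\in S$ by Theorem~\ref{thm:LocExt_RExt_CD}(A). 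So your detour through~\ref{prop:Sha2vanishingwoinverting} with its heavier apparatus (Neumann, Riemann's existence) does not avoid the key step and in fact obscures it; the paper's argument is shorter and uses only Proposition~\ref{prop:lim_over_Sha2_vanishes} together with local realization at $S$.

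One small inaccuracy: the hypothesis ``$p$ odd or $K$ totally imaginary'' is not there to avoid the Grunwald--Wang special case. It is needed already in Proposition~\ref{prop:arithKp1_App} (Artin--Verdier duality at the real places when $p=2$), and again for $\cd_p$ in Theorem~\ref{thm:LocExt_RExt_CD}(C).
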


\begin{rem}
If $K$ is totally imaginary or in the pro-$p$ case, the assumption $S_{\infty} \subseteq S$ is superfluous as $\Gal_S(p) = \Gal_{S \cup S_{\infty}}(p)$: if $p > 2$, then this is true in general and if $p = 2$, then this is true since we have assumed that $K$ is totally imaginary.
\end{rem}

\begin{cor}\label{cor:Kapi_corollar}
Let $K$ be a number field, $S \supseteq S_{\infty}$ a stable set of primes of $K$, such that $E^{\rm sharp}(S)$ is finite (in particular, $S$ can be any stable almost Chebotarev set with $S \supseteq S_{\infty}$). Then $\Spec \caO_{K,S}$ is a $K(\pi,1)$ for almost all primes $p$. If $E^{\rm sharp}(S) = \emptyset$ and $K$ is totally imaginary, then $\Spec \caO_{K,S}$ is a $K(\pi,1)$.
\end{cor}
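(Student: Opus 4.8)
The plan is to read this off directly from Theorem \ref{thm:Kapi1forstable}, from the finiteness statement in Proposition \ref{prop:finitnessofES}(ii), and from the lemma above which says that a connected scheme is algebraic $\Kapi$ precisely when it is algebraic $\Kapi$ for every rational prime $p$. First I would unwind the definition of the exceptional set: for a finite rational prime $p$, the condition $p \notin E^{\dagger}(S)$ means exactly that $S$ satisfies $(\dagger)_p$. So the hypothesis ``$E^{\dagger}(S)$ finite'' is just the statement that $S$ is $(\dagger)_p$ for all but finitely many $p$; and for the parenthetical ``in particular'' clause one invokes Proposition \ref{prop:finitnessofES}(ii), according to which every stable almost Chebotarev set automatically has $E^{\dagger}(S)$ finite.

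For the first assertion I would enlarge the finite set $E^{\dagger}(S)$ by adjoining the prime $2$, obtaining a finite set $E'$. For every prime $p \notin E'$ we then have that $p$ is odd and $S$ is $(\dagger)_p$; since moreover $S \supseteq S_{\infty}$ by hypothesis, Theorem \ref{thm:Kapi1forstable}(ii) applies and yields that $\Spec \caO_{K,S}$ is $\Kapi$ for $p$. As $E'$ is finite this is exactly the claim that $\Spec \caO_{K,S}$ is $\Kapi$ for almost all $p$.

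For the second assertion, if $E^{\dagger}(S) = \emptyset$ then $S$ is $(\dagger)_p$ for every finite prime $p$, and the added hypothesis that $K$ is totally imaginary guarantees that the standing assumption of Theorem \ref{thm:Kapi1forstable} (``$p$ odd or $K$ totally imaginary'') is met for every $p$, including $p = 2$. Hence $\Spec \caO_{K,S}$ is $\Kapi$ for $p$ for every rational prime $p$, and the cited lemma upgrades this to the conclusion that $\Spec \caO_{K,S}$ is an algebraic $\Kapi$. There is essentially no obstacle here, this being a packaging corollary; the only points needing a moment's care are that discarding the single prime $p = 2$ is harmless for the ``almost all'' statement (so one does not need $K$ totally imaginary there), and that one must check the hypotheses $S \supseteq S_{\infty}$ and ``$p$ odd or $K$ totally imaginary'' of Theorem \ref{thm:Kapi1forstable} before each application. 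All the real content sits in Theorem \ref{thm:Kapi1forstable} and Proposition \ref{prop:finitnessofES}.
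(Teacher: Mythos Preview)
Your argument is correct and is exactly the intended one: the paper leaves this corollary without proof, as it follows immediately from Theorem \ref{thm:Kapi1forstable}(ii), the definition of $E^{\dagger}(S)$, Proposition \ref{prop:finitnessofES}(ii), and the lemma that algebraic $\Kapi$ is equivalent to $\Kapi$ for every $p$. Your care with the prime $2$ and the hypothesis check for Theorem \ref{thm:Kapi1forstable} is appropriate.
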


\begin{ex} Let $K$ be totally imaginary. Let $\tilde{K} := \bigcup_p K(\mu_p)$. Let $M/K$ be finite Galois with $M \cap \tilde{K} = K$ and $\sigma \in \Gal_{M/K}$. Assume that $S \backsimeq P_{M/K}(\sigma)$ is stable. Then $\Spec \caO_{K,S}$ is a $\Kapi$.
\end{ex}

\begin{proof}[Proof of Theorem \ref{thm:Kapi1forstable}] (The proof essentially coincides with that of \cite{Iv} Theorem 5.12) We only prove (ii) (the pro-$p$ case (i) has a similar proof). Let $X := \Spec \caO_{K,S}$. As $L$ goes through finite subextensions of $K_S/K$, the normalization $Y$ of $X$ in $L$ goes through all finite  \'{e}tale connected coverings of $X$.
Let $V := S_p \sm S$. For any such $Y$ we have a decomposition
\[ Y \sm V \stackrel{j}{\har} Y \stackrel{i}{\hookleftarrow} V \]

\noindent in an open and a closed part. Now $Y \sm V$ is a $\Kapi$ for $p$ and $\pi_1(Y \sm V) = \Gal_{L,S \cup V}$. Hence
\begin{equation} \label{eq:XsmVist_Kapi}
c_A^i \colon \coh^i(\Gal_{L,S \cup V}) \stackrel{\sim}{\longrightarrow} \coh^i((Y \sm V)_{\et}, A)
\end{equation}

\noindent is an isomorphism for any $i \geq 0$ and any $p$-primary $\Gal_{L,S \cup V}$-module $A$. We have the Lerray spectral sequence for $j$:
\[ E^{mn}_2 = \coh^m(Y, R^n j_{\ast} \bZ/p\bZ) \Rar \coh^{m + n}(Y \sm V, \bZ/p\bZ). \]

\noindent Let us compute the terms in this spectral sequence. First of all we have
\[ R^n j_{\ast} \bZ/p\bZ = \begin{cases} \bZ/p\bZ & \text{if } n = 0, \\ \bigoplus_{\fp \in V} \coh^1(\cI_{\fp}, \bZ/p\bZ) & \text{if } n = 1, \\ 0 & \text{if } n > 1, \end{cases} \]

\noindent where $\cI_{\fp} \subseteq \cG_{\fp}$ denotes the inertia subgroup of the full local Galois group at $\fp$. Thus
\begin{eqnarray*}
E^{01}_2 &=& \bigoplus_{\fp \in V} \coh^1(\cI_{\fp}, \bZ/p\bZ)^{\cG_{\fp}^{\nr}} \\
E^{11}_2 &=& \coh^1(Y_{\et}, \bigoplus_{\fp \in V} \coh^1(\cI_{\fp}, \bZ/p\bZ)) = \bigoplus_{\fp \in V} \coh^2(\cG_{\fp}, \bZ/p\bZ) \\
\end{eqnarray*}

\noindent and $E^{mn}_2 = 0$ if $n > 1$ or if $n = 1$ and $m > 1$ (as $\cd_p(\cG_{\fp}^{\nr}) = 1$). Further, $E^{m0}_2 = 0$ for $m > 3$, as $\cd_p Y \leq 3$ and $E^{30}_2 = \coh^3(Y, \bZ/p\bZ) = 0$ by \cite{Iv} Lemma 5.9. Further,
\[ E^{10}_2 = \coh^1(Y_{\et},\bZ/p\bZ) = \coh^1(\Gal_{L,S}, \bZ/p\bZ). \]

\noindent Thus we have the following non-zero entries in the second tableau:

\[ \xymatrix{
\bigoplus_{\fp \in V} \coh^1(\cI_{\fp}, \bZ/p\bZ)^{\cG_{\fp}^{\nr}} \ar[rrd]^{\delta_2^{01}}	& \bigoplus_{\fp \in V} \coh^2(\cG_{\fp}, \bZ/p\bZ) & 0 & 0 \\
\bZ/p\bZ					& \coh^1(\Gal_{L,S}, \bZ/p\bZ)				& \coh^2(Y_{\et}, \bZ/p\bZ) & 0
}
\]

\noindent From this and the isomorphism \eqref{eq:XsmVist_Kapi} we obtain the following exact sequence (from now on, we omit the $\bZ/p\bZ$-coefficients):

\[ \xymatrix{
0 \ar[r] & \coh^1(\Gal_{L,S}) \ar[r] 	& \coh^1(\Gal_{L,S \cup V}) \ar[r] & \bigoplus_{\fp \in V} \coh^1(\cI_{\fp})^{\cG_{\fp}^{\nr}} \ar[r]^(0.75){\delta_2^{01}} & \\
  \ar[r] & \coh^2(Y_{\et}) \ar[r]		& \coh^2(\Gal_{L,S \cup V}) \ar[r] & \bigoplus_{\fp \in V} \coh^2(\cG_{\fp}) \ar[r] & 0
}
\]

\noindent By Proposition \ref{prop:arithKp1_App} it is enough to show that $\dirlim_{Y \in \Fet_X} \coh^2(Y_{\et}) = 0$. Taking the limit over all $Y \in \Fet_X$ of this sequence, we see by Theorem \ref{thm:dirlim_GW_coker_vanishes} that the direct limit of the maps preceding $\delta_2^{01}$ is surjective, hence we obtain:

\[ \dirlim_{Y \in \Fet_X} {\rm H}^2(Y_{\et}) \cong \dirlim_{Y \in \Fet_X} \Sha^2(K_{S \cup V}/L, V; \bZ/p\bZ). \]


\noindent To finish the proof consider the following commutative diagram with exact rows:

\[ \xymatrix{
         & \coh^2(\Gal_{L, S \cup V}) \ar[r] \ar[d] & \bigoplus_{\fp \in S \cup V} \coh^2(\cG_{\fp}) \ar[r] \ar[d] & \mu_p(L)^{\vee} \ar[r] \ar[d] & 0 \\
0 \ar[r] &  \bigoplus_{\fp \in V} \coh^2(\cG_{\fp}) \ar[r]^{=} &  \bigoplus_{\fp \in V} \coh^2(\cG_{\fp}) \ar[r] & 0 \ar[r] & 0,
}
\]

\noindent in which the first map in the upper row gets injective after taking the limit by Proposition \ref{prop:lim_over_Sha2_vanishes}. Snake lemma shows that

\[ \dirlim_{Y \in \Fet_X} {\rm H}^2(Y_{\et}) \cong \dirlim_{Y \in \Fet_X} \Sha^2(K_{S \cup V}/L, V; \bZ/p\bZ) \subseteq \dirlim_{Y \in \Fet_X} \bigoplus_{\fp \in S} \coh^2(\cG_{\fp}), \]

\noindent and the last limit vanishes as $p^{\infty}| [K_{S,\fp}:K_{\fp}]$ for all $\fp \in S$ by Theorem \ref{thm:LocExt_RExt_CD}(A). This finishes the proof of (ii).
\end{proof}


\renewcommand{\refname}{References}

\end{document}